\lstdefinelanguage{GAP}{%
  morekeywords={%
    Assert,Info,IsBound,QUIT,%
    TryNextMethod,Unbind,and,break,%
    continue,do,elif,%
    else,end,false,fi,for,%
    function,if,in,local,%
    mod,not,od,or,%
    quit,rec,repeat,return,%
    then,true,until,while%
  },%
  sensitive,%
  morecomment=[l]\#,%
  morestring=[b]",%
  morestring=[b]',%
}[keywords,comments,strings]
\newcommand{\N}{\mathbb{N}}
\newcommand{\Z}{\mathbb{Z}}
\newcommand{\Q}{\mathbb{Q}}
\newcommand{\F}{\mathbb{F}}
\newcommand{\C}{\mathbb{C}}
\newcommand{\CP}{\mathbb{C}P}
\newcommand{\ii}{\mathbf{i}}
\newcommand{\jj}{\mathbf{j}}
\newcommand{\Fl}{\mathrm{Fl}}
\newcommand{\UFl}{\overline{\mathrm{Fl}}}
\DeclarePairedDelimiterX{\inp}[2]{\langle}{\rangle}{#1, #2}
\newcommand*{\doublerightarrow}[2]{\mathrel{
  \settowidth{\@tempdima}{$\scriptstyle#1$}
  \settowidth{\@tempdimb}{$\scriptstyle#2$}
  \ifdim\@tempdimb>\@tempdima \@tempdima=\@tempdimb\fi
  \mathop{\vcenter{
    \offinterlineskip\ialign{\hbox to\dimexpr\@tempdima+2em{##}\cr
    \rightarrowfill\cr\noalign{\kern.3ex}
    \rightarrowfill\cr}}}\limits^{\!#1}_{\!#2}}}
\newcommand*{\triplerightarrow}[1]{\mathrel{
  \settowidth{\@tempdima}{$\scriptstyle#1$}
  \mathop{\vcenter{
    \offinterlineskip\ialign{\hbox to\dimexpr\@tempdima+2em{##}\cr
    \rightarrowfill\cr\noalign{\kern.2ex}
    \rightarrowfill\cr\noalign{\kern.2ex}
    \rightarrowfill\cr}}}\limits^{\!#1}}}
\newcommand*{\quadrightarrow}[1]{\mathrel{
  \settowidth{\@tempdima}{$\scriptstyle#1$}
  \mathop{\vcenter{
    \offinterlineskip\ialign{\hbox to\dimexpr\@tempdima+2em{##}\cr
    \rightarrowfill\cr\noalign{\kern.1ex}
    \rightarrowfill\cr\noalign{\kern.1ex}
    \rightarrowfill\cr\noalign{\kern.1ex}
    \rightarrowfill\cr}}}\limits^{\!#1}}}    
\let\oldemptyset\emptyset
\let\emptyset\varnothing
\providecommand{\customgenericname}{}
\newcommand{\newcustomtheorem}[2]{%
  \newenvironment{#1}[1]
  {%
   \renewcommand\customgenericname{#2}%
   \renewcommand\theinnercustomgeneric{##1}%
   \innercustomgeneric
  }
  {\endinnercustomgeneric}
}
\theoremstyle{plain}
\newtheorem{theo}{Theorem}[section]
\newtheorem{cor}[theo]{Corollary}
\newtheorem{lema}[theo]{Lemma}
\newtheorem{prop}[theo]{Proposition}
\theoremstyle{definition}
\newtheorem{dfn}[theo]{Definition}
\newtheorem{exm}[theo]{Example}
\theoremstyle{remark}
\begin{document}

\author[S. Jana]{Santanil Jana}
\address{Mathematics Department, 
	University of British Columbia}
\email{santanil@math.ubc.ca}

\title[Classifying Space for Commutativity in \texorpdfstring{$U(3)$}{U(3)}]{On the Cohomology of the Total Space of Classifying Space for Commutativity in \texorpdfstring{$U(3)$}{U(3)}}
\maketitle

\begin{abstract}
    The classifying space for commutativity for a Lie group $G$, denoted as $B_{com} G$ was first introduced in \cite{Adem-Cohen-Torres}. The total space of the principal $G$-bundle associated with $B_{com} G$ is denoted by $E_{com} G$. In this paper, we describe $E_{com} U(3)$ as a homotopy colimit of a diagram of spaces and offer a computation of the mod $2$ and mod $3$ cohomologies of $E_{com} U(3)$ by utilizing the spectral sequence associated with a homotopy colimit. We investigate the cohomology of different spaces in the homotopy colimit diagram. These spaces are intriguing in their own right and contribute to the overall fascination of the analysis. We also present the ring structure of the rational cohomology of $E_{com} U(3)$.
\end{abstract}


\section{Introduction}

Classifying spaces are fundamental objects in algebraic topology, as they allow for the classification of principal bundles. These spaces also have significant applications in gauge theories within the standard model of particle physics. In 2012, Adem-Cohen-Torres \cite{Adem-Cohen-Torres} introduced the concept of the classifying space for commutativity in Lie groups, which was motivated by questions that arose within the context of gauge theory \cite{kac}. The spaces of homomorphism and classifying space for commutativity have been an active area of research in Algebraic Topology since their inception. Recent research has focused on studying the algebraic and geometric properties of these spaces, as well as their applications in different areas of mathematics and physics. For instance, there have been studies on the homology and homotopy type of these spaces, including recent works in \cite{Adem-Gomez}, \cite{antolin-camarena-villarreal}, \cite{okay20}, and \cite{adem2022}. Additionally, Bergeron-Silberman has explored the connection between these spaces and the representation theory of Lie groups in \cite{bergeron2016}. Recently, the study of commutative and nilpotent K-theory (\cite{adem2017infinite}, \cite{gritschacher2018}) has gained interest, and it has direct connections with the classifying spaces for commutativity. In \cite{Okay2021}, Okay-Sheinbaum constructed the classifying space for quantum contextuality which is a modified version of the original construction with a focus on its applications in contextuality.\\

For a topological group $G$, let $C_n (G)$ be the set of $n$-tuples $(g_1 , \dots ,g_n)$ such that $g_i g_j = g_j g_i$ for all $1 \le i,j \le n$. We topologize $C_n(G)$ as a subspace of $G^n$ and can identify it with the space of homomorphisms $ \mathrm{Hom} (\Z^n, G) $. The spaces $\{ \mathrm{Hom} (\Z^n , G) \}_{n\ge 0}$ can be assembled into a simplicial space $\mathrm{Hom} (\Z^* , G)$, whose geometric realization $|\mathrm{Hom} (\Z^* , G)|$ is denoted as $B_{com} G $ and was first introduced in \cite{Adem-Cohen-Torres}. The universal principal $G$-bundle over $BG$ can be pulled back to a principal $G$-bundle over $B_{com} G$, and we refer to its total space as $E_{com} G$. The space $E_{com} G$ has notable characteristics and can also be described simplicially. In this paper, we will investigate the cohomology of $E_{com} G$ for the case when $G = U(3)$.\\

We assume that $G$ is locally compact, Haussdorff and $1_G \in G$ is a non-degenerate basepoint. We can associate a simplicial space $[B_{com} G]_*$ to $G$ in the following way. For all $n\ge 0$ define  \[ [B_{com} G]_n := \mathrm{Hom} (\Z^n, G) \subset G^n . \] 
Note that $[B_{com} G]_n$ is given the subspace topology. The face and degeneracy maps are given by the restrictions of the corresponding maps in the bar construction $[BG]_*$. \begin{align*}
    \partial_i (g_1, \dots, g_n) &= \begin{cases} (g_2, \dots, g_n) & \text{if } i=0, \\ (g_1, \dots, g_i g_{i+1}, \dots, g_n) & \text{if } 1\le i \le n-1, \\ (g_1, \dots, g_{n-1}) & \text{if } i=n, \\ \end{cases} \\
    \sigma_j (g_1, \dots, g_n) &= (g_1 , \dots , g_j , 1_G , g_{j+1}, \dots , g_n).
\end{align*}
We define $B_{com} G = |[B_{com} G]_*| $, the geometric realization of the simplicial space $[B_{com} G]_*$. Similarly, we can define $[E_{com} G]_n := \mathrm{Hom} (\Z^n, G) \times G \subset G^{n+1}$ and use the analogous face and degeneracy maps to define $E_{com} G = |[E_{com} G]_*| $. In \cite{Adem-Cohen}, Adem-Cohen proved that $[B_{com} G]*$ and $[E_{com} G]_*$ are proper simplicial spaces, for Lie groups $G$ that are closed subgroups of $GL(n,\mathbb{C})$. Later, this result was extended to hold for any Lie group by Adem-G{\'o}mez in \cite{Adem-Gomez}.\\

We can define a simplicial map by projecting the first $n$-coordinates of $[E_{com} G]_n$ to its base space $[B_{com} G]_n$. This gives us a continuous map $p_{com}: E_{com} G \rightarrow B_{com} G$, which in turn defines a principal $G$-bundle. This bundle can be seen as a restriction of the universal $G$-bundle $p: EG \rightarrow BG$, and we have a morphism of principal $G$-bundles \[ \begin{tikzcd} E_{com} G \ar[r] \ar[d, "p_{com}"] & EG \ar[d, "p"] \\ B_{com} G \ar[r, "i"] & BG. \end{tikzcd} \] This gives rise to a fibration sequence $E_{com} G \rightarrow B_{com} G \rightarrow BG$. The following definition is due to Adem-G{\'o}mez \cite{Adem-Gomez}.
\begin{dfn}
    Suppose that $X$ is a CW complex. A principal $G$-bundle $q: E \rightarrow X$ is said to be \textit{transitionally commutative} if there exists an open cover $\{ U_i \}$ of $X$ such that the bundle $q: E \rightarrow X$ is trivial over each $U_i$, and the transition functions $\rho_{i,j}: U_i \cap U_j \rightarrow G$ commute with each other whenever they are defined simultaneously.
\end{dfn}
The space $BG$ classifies all principal $G$-bundles \cite{Steenrod51}. More precisely, given a principal $G$-bundle $q: E \rightarrow X$, there is a continuous map $f:X \rightarrow BG$ up to homotopy such that $q: E\rightarrow X$ is isomorphic to $f^*p: f^* (EG) \rightarrow X$ as principal $G$-bundles. In other words, there is a bijection between $[X, BG]$ and $P_G (X)$. The space $B_{com} G$ classifies all transitionally commutative principal $G$-bundles in the following sense.

\begin{theo} \label{tr-com} (\cite{Adem-Gomez}, Theorem~2.2)
    Suppose we have a principal G-bundle $q: E \rightarrow X$ over a finite CW complex $X$ and let $f: X \rightarrow BG$ be the classifying map of $q$. Then, $f$ factors through $B_{com}G$ up to homotopy if and only if $q$ is transitionally commutative.
\end{theo}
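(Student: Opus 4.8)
The plan is to treat the two implications separately, with a common observation linking them: the morphism of principal $G$-bundles displayed above covers $i$, hence identifies $E_{com}G$ with the pullback $i^{*}(EG)$. Consequently, if $f\simeq i\circ g$ for some $g\colon X\to B_{com}G$, then $q\cong f^{*}(EG)\cong g^{*}\bigl(i^{*}(EG)\bigr)\cong g^{*}(E_{com}G)$. So the ``only if'' direction reduces to showing that $p_{com}\colon E_{com}G\to B_{com}G$ stays transitionally commutative after pullback along any map from a finite CW complex, while the ``if'' direction asks for an explicit factorization of $f$ built out of a transitionally commutative atlas for $q$.

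For the ``if'' direction, assume $q$ is transitionally commutative, witnessed by an open cover $\{U_i\}$ whose transition functions commute. Since $X$ is compact we pass to a finite subcover, which is still transitionally commutative and, by normality of $X$ and the shrinking lemma, numerable; write $\rho_{ij}\colon U_i\cap U_j\to G$ for its transition functions. The plan is to use the classical dictionary between bundle cocycles and maps to a bar construction: the cocycle $\{\rho_{ij}\}$ assembles into a map of simplicial spaces from the Čech nerve of $\{U_i\}$ to $[BG]_{*}$, and because the $\rho_{ij}$ commute \emph{pointwise}, in simplicial degree $n$ the resulting map $\coprod U_{i_0}\cap\cdots\cap U_{i_n}\to G^{n}$ factors through $\mathrm{Hom}(\Z^{n},G)=[B_{com}G]_n$. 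Hence the whole simplicial map factors through $[B_{com}G]_{*}\subset[BG]_{*}$. Passing to geometric realizations, and using that the augmentation from the realized Čech nerve down to $X$ is a weak equivalence for numerable covers, one obtains $h\colon X\to B_{com}G$ with $i\circ h\simeq f$, since the realized Čech cocycle into $[BG]_{*}$ is, through this weak equivalence, a classifying map of $q$; this $h$ is the desired factorization.

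For the ``only if'' direction the plan is to prove that $p_{com}$ is \emph{universally} transitionally commutative. Recall $B_{com}G=|[B_{com}G]_{*}|$ and $E_{com}G=|[E_{com}G]_{*}|$, and that the simplicial map underlying $p_{com}$ is, in each degree, the trivial principal bundle $\mathrm{Hom}(\Z^{n},G)\times G\to\mathrm{Hom}(\Z^{n},G)$. The realization of a proper simplicial space carries a standard numerable open cover adapted to its skeletal filtration $F_0\subset F_1\subset\cdots$; properness of $[B_{com}G]_{*}$ and $[E_{com}G]_{*}$ --- established by Adem--Cohen and, in general, by Adem--G\'omez, as recalled above --- ensures that this filtration is by closed cofibrations, that the realization carries the colimit topology, and hence that $p_{com}$ is a principal $G$-bundle trivial over each chart. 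The crucial point is that the transition functions between adjacent charts are assembled from the simplicial face maps of $[E_{com}G]_{*}$, hence from the entries $g_1,\dots,g_n$ of commuting tuples $(g_1,\dots,g_n)\in\mathrm{Hom}(\Z^{n},G)$ and products of these, so that at any single point all group elements occurring lie in one abelian subgroup of $G$ and the transition functions pairwise commute. Finally, given $g\colon X\to B_{com}G$ with $X$ finite CW, compactness of $X$ (again via properness) forces $g(X)$ into some finite skeleton $F_N$, so that only finitely many charts matter; pulling this cover, its trivializations, and its commuting transition functions back along $g$ exhibits $q\cong g^{*}(E_{com}G)$ as transitionally commutative.

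I expect the ``only if'' direction to be the real obstacle. The ``if'' direction is essentially careful bookkeeping with the standard correspondence between Čech $1$-cocycles and maps to a bar construction, plus the trivial observation that commuting transition functions force the relevant simplices into $\mathrm{Hom}(\Z^{n},G)$. By contrast, the ``only if'' direction must engage with the infinite-dimensional space $B_{com}G$ directly: one has to pin down the standard open cover of a proper realization, verify local triviality of $p_{com}$ over charts meeting arbitrarily high skeleta, and --- most delicately --- check that the transition functions, which are genuinely \emph{not} locally constant, nonetheless take values pointwise in abelian subgroups of $G$; controlling the interplay between the skeletal filtration of $B_{com}G$ and the compactness of $X$, on the basis of properness, is the technical heart of the argument.
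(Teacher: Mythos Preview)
The paper does not prove this theorem; it merely quotes it from \cite{Adem-Gomez} as background, so there is no proof in the paper to compare your proposal against. Your outline is a reasonable sketch of the argument one finds in the cited source: the ``if'' direction via the \v{C}ech nerve of a finite commuting atlas mapping into $[B_{com}G]_*\subset [BG]_*$, and the ``only if'' direction via an explicit transitionally commutative trivialization of $p_{com}$ coming from the skeletal filtration of the realization. If you want detailed feedback on the correctness of the individual steps (e.g., the precise form of the transition functions for the standard cover of a realized simplicial space, or the passage from the \v{C}ech cocycle map to a classifying map), you should compare directly with Adem--G\'omez rather than with this paper.
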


The computations in this paper are inspired by Antol{\'i}n-Gritschacher-Villarreal \cite{antolin-camarena-villarreal} where they computed the cohomology of $B_{com} G$ and $E_{com} G$ for low-dimensional Lie groups such as $O(2)$, $SU(2)$, $U(2)$, and $SO(3)$. They determined the integral cohomology rings, the mod $2$ cohomology rings along with the action of the Steenrod algebra. In all cases, the cohomology rings were either torsion-free or had only $2$-torsion. In \S \ref{section:2.3}, we present our computation of the cohomology of $E_{com} U(3)$, which is the first example of a Lie group of order $3$ in the complex case. We shall see in \S \ref{subsection:2.3.2} that the cohomology of $E_{com} U(3)$ has both $2$ and $3$-torsion.  
\begin{customthm}{4.6 \& 4.7}
    The $\F_2$ and $\F_3$-cohomology of $E_{com} U(3)$ is given by \[ H^d (E_{com} U(3) ; \F_2 ) = \begin{cases} \F_2 & d=0,5,9,12,13,14 \\ \F_2^{\oplus 2} & d=4,8 \\ \F_2^{\oplus 3} & d=6 \\ 0  & \text{otherwise}   \end{cases} \quad H^d (E_{com} U(3) ; \F_3 ) = \begin{cases} \F_3 & d=0,5,7,13 \\ \F_3^{\oplus 2} & d=4,9,12 \\ \F_3^{\oplus 3} & d=6,8,10,11 \\ 0 & \text{otherwise}   \end{cases}  \] 
\end{customthm}
The rational cohomology of $E_{com} U(n)$ was described additively in Corollary 8.2 of \cite{Adem-Gomez}. In \S \ref{section:2.3.3}, we describe the ring structure of $H^* (E_{com} U(3) ; \Q)$.
\section{The Space \texorpdfstring{$E_{com} G$}{EcomG} as a Homotopy Colimit} \label{section:2.1}

Adem-G{\'o}mez gave a description of the spaces $B_{com} G$ and $E_{com} G$ for a compact connected Lie group $G$ as homotopy colimits over a finite poset in \cite{Adem-Gomez}. In our computations in \S \ref{section:2.3}, we will use this description directly. Therefore, we provide a review of some definitions and results from \cite{Adem-Gomez} that are relevant to our purpose.

\begin{dfn}
Let $G$ be a Lie group. Define $\mathcal{T} (G) $ to be the poset whose objects are closed subspaces $S\subset G$, which are intersections of a collection of maximal tori in $G$. The partial order in $\mathcal{T} (G)$ is given by inclusion.   
\end{dfn}
The topology on the set $\mathcal{T} (G)$ can be described as follows. Let $\mathcal{C} (G)$ be the set of all closed subspaces of $G$. Suppose $\mathcal{U} = \{ U_1 ,\dots ,U_k \}$ is a finite collection of open sets in $G$. Define $\mathcal{C} (G,\mathcal{U})$ as the set of elements $A\in \mathcal{C} (G)$ such that $A\subseteq \cup_{i=1}^{k} U_i$ and $A\cap U_i \neq \oldemptyset $ for all $1\leq i \leq k$. The sets $\mathcal{C} (G,\mathcal{U})$ form a basis for a topology in $\mathcal{C} (G)$ called the finite topology. We give $\mathcal{T} (G)$ the subspace topology via the inclusion $\mathcal{T} (G) \subseteq \mathcal{C} (G)$. \\

Let $\mathfrak{g}$ and $\mathfrak{t}$ be the Lie algebras of $G$ and a maximal torus $T \subset G$ respectively. Also, let $\Phi$ be a root system associated to $(\mathfrak{g}, \mathfrak{t})$ with $\Phi^{+}$ the subset of positive roots. Define for each $\alpha \in \Phi^{+}$ 
\[ \mathfrak{t}_{\alpha} = \{ X\in \mathfrak{t} \mid \alpha (X) \in 2\pi i \Z \}. \]
Let $I= \{ \alpha_1, \dots ,\alpha_k \} $ be a set of positive roots and \[ \mathfrak{t}_I := \cap_{i=1}^{k} \mathfrak{t}_{\alpha_i} \quad \text{and} \quad T_I := \mathrm{exp} (\mathfrak{t}_I ) . \]
We also set $\mathfrak{t}_{\oldemptyset} := \mathfrak{t}$ and hence $T_{\oldemptyset} = \mathrm{exp} (\mathfrak{t} ) = T$. We take a set of simple roots $\Delta = \{\alpha_1,\dots , \alpha_l \}$ for the root system $\Phi$. Denote the Weyl group of $G$ by $W$, which is a reflection subgroup generated by the reflections $s_{\alpha}$, $\alpha\in \Delta$. For a subset $I\subset \Delta$, define $W_I$ to be the subgroup of $W$ generated by the reflections $s_{\alpha}$, $\alpha \in I$. For $I,J \subseteq \Delta$, $W_I$ and $W_J$ are conjugate if and only if $I=wJ$ for some $w\in W$. When this happens we say that $I$ and $J$ are in the same Coxeter class and denote it as $I\sim_W J$. The relation $\sim_W$ is an equivalence relation on the subsets of $\Delta$ and the set of equivalence classes is denoted as $\mathcal{E}_W$.
\begin{theo} \label{top-poset} (\cite{Adem-Gomez}, Theorem 5.4)
Let $G$ be a compact connected Lie group. Fix a set of simple roots $\Delta$. Then any element $S\in \mathcal{T} (G)$ is conjugated to some $T_I$ for $I\subseteq \Delta$. Moreover, there is a $G$-equivariant homeomorphism $\mathcal{T} (G) \cong  \coprod_{[I] \in \mathcal{E}_W} G/N_G (T_I) $
\end{theo}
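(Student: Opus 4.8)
The plan is to prove the two halves of the statement in turn: (i) every $S \in \mathcal{T}(G)$ is $G$-conjugate to one of the subgroups $T_I$ with $I \subseteq \Delta$; and (ii) $\mathcal{T}(G)$, with its finite topology, decomposes $G$-equivariantly (for the conjugation action $g\cdot A = gAg^{-1}$ on closed subsets) as $\coprod_{[I] \in \mathcal{E}_W} G/N_G(T_I)$. I would establish (i) first, since (ii) is essentially formal once (i) is in hand; note that the isotropy group of $T_I$ under the conjugation action is exactly $N_G(T_I)$.

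For (i), I would begin with two reductions. Since $S = \bigcap_j M_j$ for maximal tori $M_j$, the subgroup $S$ lies in each $M_j$, hence in a maximal torus; all maximal tori being $G$-conjugate, after conjugating we may assume $S \subseteq T$, and then $S = T \cap S = T \cap \bigcap_j M_j$, so we may further assume $T$ is among the tori, writing $M_j = g_j T g_j^{-1}$. The structural input is that for $t \in T$, $g \in G$ one has $g^{-1} t g \in T$ iff $g \in Z_G(t)\,N_G(T)$: any two $G$-conjugate elements of $T$ are $W$-conjugate, so $g^{-1}tg = n^{-1}tn$ for some $n \in N_G(T)$, whence $g n^{-1} \in Z_G(t)$, and the converse is immediate. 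Hence
\[
S \;=\; \bigl\{\, t \in T \;:\; g_j \in Z_G(t)\,N_G(T) \ \text{for all } j \,\bigr\}.
\]
Now for $t \in T$ the identity component $Z_G(t)^{\circ}$ is generated by $T$ and the root subgroups $\exp(\mathfrak g_{\pm\alpha})$ for the roots $\alpha$ with $\alpha(t) \in 2\pi i \Z$, so the conditions cutting out $S$ depend only on which subsets of roots arise this way; carrying this analysis through should show that $S$ equals a common kernel $\bigcap_{\alpha \in \Psi} \exp(\mathfrak t_\alpha)$ for a suitable set $\Psi$ of roots, and that after applying a Weyl element one may take $\Psi$ to be spanned by a subset $I \subseteq \Delta$, so that $S$ is conjugate to $T_I$. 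For the converse — that each $T_I$ genuinely occurs as an intersection of maximal tori — I would use $T_I = \bigcap_{\alpha \in I} \exp(\mathfrak t_\alpha)$ together with the fact that for each root $\alpha$, $\exp(\mathfrak t_\alpha) = T \cap h_\alpha T h_\alpha^{-1}$ for a generic element $h_\alpha$ of the rank-one subgroup attached to $\alpha$: that subgroup centralizes the codimension-one subtorus $\exp(\ker\alpha|_{\mathfrak t}) \subseteq T$ and carries the complementary coroot circle to a transverse one, so that $T$ and $h_\alpha T h_\alpha^{-1}$ meet exactly in $\exp(\mathfrak t_\alpha)$; intersecting over $\alpha \in I$ then realizes $T_I$ as a finite intersection of maximal tori.

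For (ii), fix a representative $I \subseteq \Delta$ of each class $[I] \in \mathcal{E}_W$ and consider the orbit map $G/N_G(T_I) \to \mathcal{T}(G)$, $gN_G(T_I) \mapsto g T_I g^{-1}$. Each is a continuous injection (continuity because the conjugation action of $G$ on closed subsets is continuous for the finite topology, injectivity by orbit–stabilizer), the images cover $\mathcal{T}(G)$ by (i), and images of distinct classes are distinct $G$-orbits because $T_I$ and $T_J$ (for $I,J \subseteq \Delta$) are $G$-conjugate iff $I \sim_W J$ — one direction by realizing a Weyl conjugation inside $N_G(T)$, the other by applying the centralizer argument above to a hypothetical conjugacy $g T_I g^{-1} = T_J$. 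Finally, $G$ is compact, so each $G/N_G(T_I)$ and hence each orbit is compact; $\mathcal{C}(G)$ with the finite topology is compact Hausdorff since $G$ is a compact metric space, so $\mathcal{T}(G)$ is Hausdorff; and as there are only finitely many classes, each orbit is closed (being compact) and open (being the complement of the rest). Thus $\coprod_{[I]} G/N_G(T_I) \to \mathcal{T}(G)$ is a $G$-equivariant continuous bijection, a homeomorphism on each clopen piece, hence a homeomorphism.

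The hard part is the combinatorial core of (i): showing that an arbitrary intersection of maximal tori — equivalently, the solution set of the conditions $g_j \in Z_G(t)N_G(T)$ — is, up to $G$-conjugacy, one of the $T_I$ with $I \subseteq \Delta$. This forces one to control precisely which root subsystems arise from the centralizers $Z_G(t)$, to handle the potential disconnectedness of $Z_G(t)$ and of the subgroups $\exp(\mathfrak t_\alpha)$ and their intersections, and to reduce the relevant subsystems, modulo the Weyl group, to those spanned by subsets of $\Delta$. Everything else is either standard structure theory (conjugacy of maximal tori, description of centralizers, rank-one subgroups) or the formal topological packaging in (ii).
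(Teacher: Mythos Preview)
The paper does not prove this theorem: it is quoted verbatim as Theorem~5.4 of Adem--G\'omez \cite{Adem-Gomez} and used as a black box, with no argument supplied here. Consequently there is no ``paper's own proof'' against which to compare your proposal; any genuine comparison would have to be made against the original Adem--G\'omez argument, which lies outside the present text.

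That said, your outline for part (ii) is essentially the standard orbit-decomposition argument and is sound: continuity of the conjugation action on the hyperspace, orbit--stabilizer, compactness of $G/N_G(T_I)$, Hausdorffness of the Vietoris topology, and finiteness of $\mathcal{E}_W$ combine exactly as you say. For part (i), your reduction to $S\subseteq T$ and the characterization
\[
T\cap gTg^{-1}=\{\,t\in T:\ g\in Z_G(t)\,N_G(T)\,\}
\]
are correct, and you are right to flag the remaining step --- identifying this solution set with some $T_I$, and then reducing $I$ to a subset of $\Delta$ modulo $W$ --- as the substantive content. Note, however, that the set on the right is a \emph{union} over $n\in N_G(T)$ of subsets $T\cap Z_G(gn^{-1})$, so it is not immediately a subgroup, let alone one of the $T_I$; extracting a clean description of $T\cap gTg^{-1}$ in terms of roots requires more than the centralizer formula alone. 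One also has to be careful with components: the $T_I=\exp(\mathfrak t_I)$ need not be connected, and matching them with intersections of maximal tori (rather than merely with intersections of their Lie algebras) is part of the work. Your sketch is a plausible roadmap, but the gap you yourself identify is real and is precisely where the Adem--G\'omez proof does its heavy lifting.
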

Using the homeomorphism from Theorem~\ref{top-poset}, we demonstrate the topological poset $\mathcal{T} (U(n))$ for $n\geq 1$ in the following example. 
\begin{exm} \label{exTU(n)}(\cite{Adem-Gomez}, Example~5.5)
     A partition of $n$ is a non-decreasing sequence of integers $\lambda = (\lambda_1, \dots , \lambda_k)$ such that $n = \lambda_1 + \cdots + \lambda_k$. We write $\lambda \vdash n$ to mean that $\lambda$ is a partition of $n$. Given a partition $\lambda \vdash n$, we define the flag manifold $\Fl_{\lambda} (\C) $ as the quotient space $U(n)/(U(\lambda_1)\times \cdots \times U(\lambda_k))$. Geometrically, $\Fl_{\lambda} (\C)$ consists of all possible nested chains of complex subspaces $V_1 \subset V_2 \subset \cdots \subset V_k =\C^n$ of $\mathbb{C}^n$, where each subspace $V_i$ has dimension $\lambda_1 + \cdots + \lambda_i$. We can view $\lambda$ as an ordered $k$-tuple $(\lambda_1, \dots , \lambda_k)$ and the symmetric group $\Sigma_k$ acts on the set of such $k$-tuples by permutation. The isotropy subgroup of $\Sigma_k$ at $\lambda$ under this action is denoted by $\Sigma_{\lambda}$. Then, the topological poset $\mathcal{T} (U(n)) $ is homeomorphic to $ \coprod_{\lambda \vdash n} \Fl_{\lambda} (\C) /\Sigma_{\lambda}$.
\end{exm}

For a compact connected Lie group $G$, let $n= \mathrm{rank} (G) - \mathrm{rank} (Z)$, where $Z:=Z(G)$ is the center of $G$. Define $\rho : \mathcal{T} (G) \rightarrow \N \cup \{ 0\}$ by $\rho (S) := \mathrm{rank} (S) - \mathrm{rank} (Z)$. We see that for all $S\in \mathcal{T} (G)$, $0\le \rho (S) \le n$. Also, $\rho$ is strictly increasing and constant on each connected component of $\mathcal{T} (G)$. Let $\mathcal{S} (n)$ be the poset of all non-empty subsets of $\{ 0,1,\dots , n\}$, with the order being the reverse inclusion of sets. Elements of $\mathcal{S} (n)$ are of the form $\ii =\{ i_0 , \dots ,i_k \} $, where $0\le i_0 < i_1 < \cdots < i_k \le n $. We define the functors $\mathcal{F}_G$ and $\mathcal{H}_G$ associated with a compact connected Lie group $G$ as follows. Let $\ii = \{ i_0 , \dots ,i_k \} \in \mathcal{S} (n)$ and define  
\begin{align*}
    \mathcal{H}_G (\ii) &:= \{ (S_0 , \dots , S_k,b )| S_0 \subset \cdots \subset S_k \in \mathcal{T} (G), \text{ } \rho (S_r) = i_r \text{ for } 0\le r \le k \text{ and } b\in G/S_0 \}.
\end{align*}
Note that $\mathcal{H}_G (\ii) \subset \mathcal{T} (G)^{k+1} \times G/Z$. We give $\mathcal{H}_G (\ii)$ the subspace topology. If $\jj \subset \ii$, then the natural projection map induce continuous functions $p_{\ii , \jj} : \mathcal{H}_G (\ii) \rightarrow \mathcal{H}_G (\jj)$. Hence, $\mathcal{H}_G : \mathcal{S} (n) \rightarrow \mathbf{Top}$ defines a functor. Fix $\ii = \{ i_0, \dots ,i_k \} \in \mathcal{S} (n)$. The space $\mathcal{H}_G (\ii)$ can be described explicitly in the following way. Let $\mathbf{S}_{\ii} = \{ S_{i_0} , \dots ,S_{i_k} \}$ and $\mathbf{S'}_{\ii} = \{ S'_{i_0} , \dots ,S'_{i_k} \}$ be two chains in $\mathcal{T} (G)$ such that $\rho (S_{i_l}) = \rho (S'_{i_l}) = i_l$ for $0\le l \le k$. Then we define the equivalence relation on the chains in $\mathcal{T} (G)$ by setting $\mathbf{S_{\ii}} \sim \mathbf{S'}_{\ii} $ if and only if there is some $g\in G$ such that $g\mathbf{S_{\ii}} g^{-1} = \mathbf{S'_{\ii}}$, that is $g S_{i_l} g^{-1} = S'_{i_l}$ for all $0\le l \le k$. Denote the set of equivalence classes by $\mathcal{E} (\ii)$. Then we have a continuous map
\begin{align*}
    \mu_{\mathbf{S}_{\ii}} : G \times G/S_0 &\longrightarrow \mathcal{H}_G (\ii) \\
    (g,a) &\longmapsto (g\mathbf{S}_{\ii} g^{-1} , gag^{-1})
\end{align*}
Let $ N(\mathbf{S}_{\ii}) := N_G (\mathbf{S}_{\ii})$ be the normalizer of $\mathbf{S}_{\ii}$ in $G$ \textit{i.e.} the subgroup of $G$ consisting of all $g\in G$ such that $g \mathbf{S}_{\ii} g^{-1} = \mathbf{S}_{\ii}$. The groups $N (\mathbf{S}_{\ii})$ acts on $G$ by $n \cdot g = gn^{-1}$ and on $G/S_{0}$ by conjugation. The map $\mu_{\mathbf{S}_{\ii}}$ is invariant under the diagonal action of $N (\mathbf{S}_{\ii})$ on $G\times G/S_0$. If we vary $\mathbf{S}_{\ii}$ through all the equivalence classes in $\mathcal{E} (\ii)$ we get a continuous map \[  \mu_{\ii} := \coprod_{[\mathbf{S}_{\ii}]\in \mathcal{E} (\ii)} \mu_{\mathbf{S}_{\ii}} : \coprod_{[\mathbf{S}_{\ii}]\in \mathcal{E} (\ii)} G \times_{N (\mathbf{S}_{\ii})} G/S_0 \longrightarrow \mathcal{H}_G (\ii).  \]
The mapping $\mu_{\ii}^{-1}$ is continuous and bijective, allowing us to describe $\mathcal{H}_G (\ii)$ using a homeomorphism, similar to the one in \S 6 of \cite{Adem-Gomez} that describes $\mathcal{F}_G (\ii)$.
\begin{equation} \label{HG} 
    \mathcal{H}_G (\ii) \cong \coprod_{[\mathbf{S_{\ii}}] \in \mathcal{E} (\ii)} G\times_{N (\mathbf{S_{\ii}})} G/S_{i_0} .
\end{equation}

The functor $\mathcal{H}_G: \mathcal{S}(2)\rightarrow\mathbf{Top}$ can be interpreted as a diagram of spaces. We can obtain the homotopy colimit over the poset by taking the homotopy colimit of this diagram. Specifically, for the poset $\mathcal{S}(2)=\{ (0), (1), (2), (0,1), (0,2), (1,2), (0,1,2) \}$, we obtain diagram in Figure~\ref{fig:poset2}. We are omitting the set notation while referring to the elements of the poset. So, $\mathcal{H}_G (i,j)$ should be interpreted as $\mathcal{H}_G (\{ i , j\} )$. The homotopy colimit of the diagram in Figure~\ref{fig:poset2} can also be interpreted as a homotopy pushout cube as the one in Figure~\ref{pushoutcube}.
\begin{figure}[t]
    \centering
    \begin{tikzcd}
        & \mathcal{H}_G (1,2) \ar[dl] \ar[dr] \\ 
        \mathcal{H}_G (2) & \mathcal{H}_G (0,1,2) \ar[d] \ar[u] \ar[r] \ar[l] \ar[dr] \ar[dl] & \mathcal{H}_G (1)  \\ 
        \mathcal{H}_G (0,2) \ar[u] \ar[r]  & \mathcal{H}_G (0) & \mathcal{H}_G (0,1) \ar[u] \ar[l] 
\end{tikzcd}
    \caption{The Diagram of Spaces $\mathcal{H}_{G} (\ii)$ for $ \ii \in \mathcal{S} (2)$.}
    \label{fig:poset2}
\end{figure} 

\begin{figure}[H]
    \centering
    \begin{tikzcd}
    & \mathcal{H}_G (1,2) \ar[rr] \ar[dd, dashed] && \mathcal{H}_G (2) \ar[dd] \\ 
    \mathcal{H}_G (0,1,2) \ar[dd] \ar[rr] \ar[ur] && \mathcal{H}_G (0,2) \ar[ur] \ar[dd] \\ 
    & \mathcal{H}_G (1) \ar[rr, dashed] &&  E_{com} U(3) \\
    \mathcal{H}_G (0,1) \ar[rr] \ar[ur] && \mathcal{H}_G (0) \ar[ur]
\end{tikzcd}
    \caption{Homotopy Pushout Interpretation of $\mathcal{S} (2)$.}
    \label{pushoutcube}
\end{figure} 

\begin{theo} \label{adem-gomez-hocolim} (\cite{Adem-Gomez}, Theorem 6.5)
    Suppose that $G$ is a compact connected Lie group. Then there are natural homotopy equivalences \[ \mathrm{hocolim}_{\ii \in \mathcal{S} (n)} \mathcal{H}_G (\ii) \simeq E_{com} G_{\mathds{1}}. \]
\end{theo}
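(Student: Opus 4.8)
The plan is to deduce the statement from the companion homotopy colimit description of $B_{com}G_{\mathds{1}}$ together with the principal $G$-bundle $p_{com}\colon E_{com}G\to B_{com}G$. Recall that $E_{com}G$ is, by construction, the pullback of the universal bundle $p\colon EG\to BG$ along $i\colon B_{com}G\to BG$; restricting $i$ to the identity component and using that $p$ is a fibration and that $G$ and $B_{com}G_{\mathds{1}}$ are connected (so the total space over $B_{com}G_{\mathds{1}}$ is connected, and is therefore $E_{com}G_{\mathds{1}}$), one gets $E_{com}G_{\mathds{1}}\simeq B_{com}G_{\mathds{1}}\times_{BG}EG$. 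Adem--G\'omez prove, in the same section that produces \eqref{HG}, a natural homotopy equivalence $B_{com}G_{\mathds{1}}\simeq\mathrm{hocolim}_{\ii\in\mathcal{S}(n)}\mathcal{F}_G(\ii)$, and the first step is to record that --- inspecting their construction --- this equivalence lies over $BG$: the composite $\mathrm{hocolim}_{\ii}\mathcal{F}_G(\ii)\to B_{com}G_{\mathds{1}}\xrightarrow{\,i\,}BG$ is the map induced by a natural transformation $\mathcal{F}_G\Rightarrow\underline{BG}$ to the constant diagram. Granting this and using that the homotopy colimit commutes with homotopy pullback along the fibration $p$, one obtains, with $\widetilde{\mathcal{F}}_G(\ii):=\mathcal{F}_G(\ii)\times_{BG}EG$,
\[
\mathrm{hocolim}_{\ii\in\mathcal{S}(n)}\widetilde{\mathcal{F}}_G(\ii)\ \simeq\ \bigl(\mathrm{hocolim}_{\ii}\mathcal{F}_G(\ii)\bigr)\times_{BG}EG\ \simeq\ B_{com}G_{\mathds{1}}\times_{BG}EG\ \simeq\ E_{com}G_{\mathds{1}},
\]
so it remains to identify the diagram $\widetilde{\mathcal{F}}_G$ with $\mathcal{H}_G$, compatibly with the structure maps $p_{\ii,\jj}$.

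For this I would use the explicit model of $\mathcal{F}_G(\ii)$ from \S 6 of \cite{Adem-Gomez}, of the form $\coprod_{[\mathbf{S}_{\ii}]}G\times_{N(\mathbf{S}_{\ii})}BS_{i_0}$ (the analogue of \eqref{HG}), on each summand of which the structure map to $BG$ is, up to the $N(\mathbf{S}_{\ii})$-action, induced by the inclusion $S_{i_0}\hookrightarrow G$. Pulling back $EG$ over such a summand yields $G\times_{N(\mathbf{S}_{\ii})}(ES_{i_0}\times_{S_{i_0}}G)$, where $ES_{i_0}\times_{S_{i_0}}G$ is the total space of the restriction of $p$ to $BS_{i_0}$. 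Since $S_{i_0}$ acts freely on $G$ by translation, the bundle projection $ES_{i_0}\times_{S_{i_0}}G\to G/S_{i_0}$ is a fibre bundle with contractible fibre $ES_{i_0}$, hence an $N(\mathbf{S}_{\ii})$-equivariant homotopy equivalence. Comparing with \eqref{HG} produces a homotopy equivalence $\widetilde{\mathcal{F}}_G(\ii)\simeq\mathcal{H}_G(\ii)$ for every $\ii$; that these equivalences commute with the maps $p_{\ii,\jj}$ and are natural in $G$ then follows by inspection from the functoriality of all the constructions involved. Combined with the display above, this gives the desired $\mathrm{hocolim}_{\ii\in\mathcal{S}(n)}\mathcal{H}_G(\ii)\simeq E_{com}G_{\mathds{1}}$.

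The main obstacle I anticipate is the first step: one has to verify carefully that the Adem--G\'omez equivalence $\mathrm{hocolim}\,\mathcal{F}_G\simeq B_{com}G_{\mathds{1}}$ is genuinely natural over $BG$, since the whole argument is a base change along $i$ and an abstract homotopy equivalence of spaces would not suffice. If that refinement is not available off the shelf, the alternative is to bypass $B_{com}G$ and carry out the Adem--G\'omez argument directly on $E_{com}G_{\mathds{1}}$: filter $E_{com}G_{\mathds{1}}$ by the rank of the subgroup generated by the commuting coordinates of a simplex, check that the filtration stages are built by gluing in the spaces $\mathcal{H}_G(\ii)$ (using \eqref{HG} and Theorem~\ref{top-poset}), and recognise the outcome as $\mathrm{hocolim}_{\ii\in\mathcal{S}(n)}\mathcal{H}_G(\ii)$ by the standard criterion for filtered spaces. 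Both routes share the same geometric core: the bundle-theoretic identification carried out in the middle step.
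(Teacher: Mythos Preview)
The paper does not prove this theorem: it is quoted verbatim as \cite[Theorem~6.5]{Adem-Gomez} and used as a black box, so there is no ``paper's own proof'' to compare against. What one can compare your proposal to is the original argument of Adem--G\'omez.

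Your primary route---deduce the $E_{com}$ statement from the $B_{com}$ statement by homotopy base change along $p\colon EG\to BG$---is sound, and the identification $\widetilde{\mathcal{F}}_G(\ii)\simeq\mathcal{H}_G(\ii)$ you sketch is exactly the right computation. The worry you flag about the equivalence $\mathrm{hocolim}\,\mathcal{F}_G\simeq B_{com}G_{\mathds{1}}$ lying over $BG$ is not a real obstacle: in \cite{Adem-Gomez} that equivalence is produced by an explicit zig-zag of simplicial maps, each of which commutes with the evident maps to (the nerve of) $G$, so the refinement is available. That said, this is not quite how Adem--G\'omez organise their proof. They treat $B_{com}G_{\mathds{1}}$ and $E_{com}G_{\mathds{1}}$ in parallel rather than deducing one from the other: they build a functor from $\mathcal{S}(n)$ to simplicial spaces whose value at $\ii$ has realisation $\mathcal{H}_G(\ii)$, and show that the induced map from the homotopy colimit to $E_{com}G_{\mathds{1}}$ is a levelwise equivalence of simplicial spaces, using Theorem~\ref{top-poset} to control the pieces. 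This is essentially your ``alternative'' filtration route. Your pullback approach is cleaner once the $B_{com}$ case is in hand and makes the relationship between $\mathcal{F}_G$ and $\mathcal{H}_G$ transparent; the original approach has the virtue of being self-contained and of yielding both theorems at once.
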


The sets $\mathcal{E} (\ii)$ that appear in the previous description were expressed in terms of the root
system $\Phi$ associated with a maximal torus $T \subset G$ in \cite{Adem-Gomez}. Let $i = \{ i_0, \dots , i_k \} \in \mathcal{S} (n)$ and $\mathbf{S}_{\ii} = \{ S_0 \subset \cdots \subset S_k \}$ be a chain in $\mathcal{T} (G)$ with $\rho (S_r) = i_r$ for $0 \le r \le k$. Any such chain is conjugated to a chain of the form $T_{I_0} \subset T_{I_1} \subset \cdots \subset T_{I_k}$ for a collection of sets of roots $I_0,\dots, I_k$. Furthermore, two chains of this form, $T_{I_0} \subset T_{I_1} \subset \cdots \subset T_{I_k}$ and $T_{J_0} \subset T_{J_1} \subset \cdots \subset T_{J_k}$, are conjugated if and only if there exists $w\in W$ such that $T_{I_r} = T_{wJ_r}$ for $0\le r\le k$. Therefore, the set $\mathcal{E} (\ii)$ can be identified with the set of equivalence classes of sequences of sets of roots of the form $(I_0,\dots,I_k)$ with $\rho(T_{I_r}) = i_r$ for $0\le r\le k$, where $(I_0,\dots, I_k) \sim (J_0,\ldots, J_k)$ if and only if there exists $w\in W$ such that $T_{I_r} =T_{wJ_r}$ for $0\le r\le k$. \\

Our focus in this paper is primarily on the scenarios where the Lie group $G$ is either $U(n)$ or $SU(n)$. For such cases, $B_{com} G$ and $E_{com} G$ are path-connected, since $\mathrm{Hom} (\Z^n , G)$ is path-connected. Additionally, $E_{com} G$ is $3$-connected (\cite{Adem-Gomez}, Proposition 3.3). The computations for $G=U(2)$ and $SU(2)$ were carried out in \cite{antolin-camarena-villarreal}. In the remaining part of this section, we delve into the specifics of the $G=U(3)$ case and make use of the spectral sequence associated with a homotopy colimit (\cite{bk72}, \S XI.7) to determine the cohomology of $E_{com} U(3)$. Before going into the cohomology computations we state a general result relating $E_{com} U(n)$ and $E_{com} SU(n)$.

\begin{lema} \label{grit1.2.8} (\cite{gritschacher}, Lemma 1.2.8) If $\widetilde{G} \rightarrow G$ is a covering homomorphism of compact connected lie groups, then the following diagram is a homotopy pullback diagram.
\[ \begin{tikzcd}   
    B_{com} \widetilde{G}_{\mathds{1}} \ar[r] \ar[d] & B\widetilde{G} \ar[d] \\
    B_{com} G_{\mathds{1}} \ar[r] & BG
\end{tikzcd} \]
\end{lema}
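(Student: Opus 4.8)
The strategy is to realise the square as the geometric realisation of a levelwise square of simplicial spaces, to recognise its two vertical legs as principal bundles for one and the same simplicial abelian group, and then to invoke the elementary fact that a morphism of principal bundles covering a map of base spaces is automatically a homotopy pullback square. Write $K=\ker(\widetilde G\to G)$; since $\widetilde G\to G$ is a covering homomorphism of connected groups, $K$ is a finite \emph{central}, hence abelian, subgroup of $\widetilde G$, and $K$ is contained in every maximal torus of $\widetilde G$. In simplicial degree $n$ the square in the statement is
\[
\begin{CD}
\mathrm{Hom}(\Z^n,\widetilde G)_{\mathds{1}} @>>> \widetilde G^n\\
@VVV @VVV\\
\mathrm{Hom}(\Z^n,G)_{\mathds{1}} @>>> G^n,
\end{CD}
\]
the horizontal maps being the defining inclusions and the vertical ones induced by $\widetilde G\to G$; this restricts to identity components because the bar face and degeneracy maps are continuous and basepoint-preserving, so that $[B_{com}G_{\mathds{1}}]_\bullet$ is a simplicial subspace of $[BG]_\bullet$ and likewise over $\widetilde G$. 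Now $\widetilde G^n\to G^n$ is a principal $K^n$-bundle, and, $K$ being central, these bundles are compatible with the bar structure maps, so $[B\widetilde G]_\bullet\to[BG]_\bullet$ is a principal bundle of simplicial spaces for the simplicial abelian group $N(K)$, the nerve of $K$; its realisation is a topological abelian group of the homotopy type of $BK=K(K,1)$.

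The one genuinely non-formal point is that each degree-$n$ square above is \emph{cartesian}, i.e.\ $\mathrm{Hom}(\Z^n,\widetilde G)_{\mathds{1}}$ is the full preimage of $\mathrm{Hom}(\Z^n,G)_{\mathds{1}}$ under $\widetilde G^n\to G^n$. The inclusion $\subseteq$ is immediate. For $\supseteq$ I would use the standard description of the identity component of the representation variety: $\mathrm{Hom}(\Z^n,G)_{\mathds{1}}$ is exactly the set of $n$-tuples that can be simultaneously conjugated into a fixed maximal torus $T$ (the image of the conjugation map $G\times T^n\to G^n$). Given $(\tilde g_1,\dots,\tilde g_n)$ with image $(g_1,\dots,g_n)=(h t_i h^{-1})_i$, $h\in G$, $t_i\in T$, lift $h$ to $\tilde h\in\widetilde G$ and each $t_i$ to $\tilde t_i$ in the maximal torus $\widetilde T$ of $\widetilde G$ covering $T$; then $\tilde g_i=\tilde h\tilde t_i\tilde h^{-1}k_i=\tilde h(\tilde t_i k_i)\tilde h^{-1}$ for some $k_i\in K$ (centrality of $k_i$ is used here), with $\tilde t_i k_i\in\widetilde T$. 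Hence $(\tilde g_i)$ is simultaneously conjugate into $\widetilde T$, so it lies in the connected subspace of tuples conjugate into $\widetilde T$, which is contained in $\mathrm{Hom}(\Z^n,\widetilde G)_{\mathds{1}}$. Therefore $[B_{com}\widetilde G_{\mathds{1}}]_\bullet\to[B_{com}G_{\mathds{1}}]_\bullet$ is the restriction of the principal $N(K)$-bundle $[B\widetilde G]_\bullet\to[BG]_\bullet$ along the simplicial subspace $[B_{com}G_{\mathds{1}}]_\bullet\hookrightarrow[BG]_\bullet$, and in particular is itself a principal $N(K)$-bundle of simplicial spaces.

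To conclude, all four simplicial spaces involved are proper (by Adem--Cohen and Adem--G\'omez for $[BG]_\bullet$ and $[B_{com}G]_\bullet$; the identity-component and $\widetilde G$-versions inherit this, the relevant level subspaces being open-closed). Hence geometric realisation sends the data of the previous paragraph to a morphism of principal $BK$-bundles
\[
\begin{CD}
B_{com}\widetilde G_{\mathds{1}} @>>> B\widetilde G\\
@VVV @VVV\\
B_{com}G_{\mathds{1}} @>>> BG,
\end{CD}
\]
the vertical maps being principal $BK$-bundles and the top map $BK$-equivariant over the bottom one. A $BK$-equivariant morphism of principal $BK$-bundles over a fixed base is an isomorphism, so the top map factors through an isomorphism $B_{com}\widetilde G_{\mathds{1}}\xrightarrow{\ \cong\ }(B_{com}G_{\mathds{1}})\times_{BG}B\widetilde G$; since $B\widetilde G\to BG$ is a fibration this exhibits the square as a pullback of a fibration, hence a homotopy pullback, which is the assertion of the lemma.

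I expect the only real obstacle to be the cartesianness in the second paragraph, and within it the appeal to the maximal-torus description of $\mathrm{Hom}(\Z^n,G)_{\mathds{1}}$: without that input the preimage of $\mathrm{Hom}(\Z^n,G)_{\mathds{1}}$ could a priori be disconnected or fail to lie inside $\mathrm{Hom}(\Z^n,\widetilde G)$, and the reduction would collapse. The remaining ingredients — compatibility of the $K^n$-actions with the simplicial structure, properness of the identity-component simplicial subspaces, and the statement that realisation sends a simplicial principal bundle to a principal bundle — are standard and only need to be recorded.
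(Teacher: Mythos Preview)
The paper does not prove this lemma: it is stated with a citation to Gritschacher's thesis (Lemma~1.2.8) and immediately used to deduce Proposition~\ref{un=sun}. So there is no in-paper argument to compare your proposal against.

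That said, your approach is the standard one and is essentially correct. The crux is exactly where you identified it: the levelwise cartesianness, i.e.\ that the preimage of $\mathrm{Hom}(\Z^n,G)_{\mathds{1}}$ in $\widetilde G^{\,n}$ equals $\mathrm{Hom}(\Z^n,\widetilde G)_{\mathds{1}}$. Your appeal to the description of $\mathrm{Hom}(\Z^n,G)_{\mathds{1}}$ as the image of the conjugation map $G\times T^n\to G^n$ (this is in Baird and in Adem--G\'omez) is the right input; it simultaneously produces commutativity of the lifted tuple (since it lands in a torus) and membership in the identity component (since $\widetilde G\times\widetilde T^{\,n}$ is connected). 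Once that is in hand, the rest---levelwise principal $K^n$-bundles compatible with the bar structure, properness so that realisation preserves the bundle structure, and the fact that a map of principal $BK$-bundles over a map of bases is a pullback---is routine. One cosmetic point: rather than invoking realisation of simplicial principal bundles, you could equivalently observe that both vertical maps become, after realisation, fibrations with fibre $BK$ and that the induced map on fibres is an equivalence; this sidesteps any worry about what ``principal $BK$-bundle'' means topologically.
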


\begin{prop} \label{un=sun}
There are homotopy equivalences $E_{com} U(n) \simeq E_{com} SU(n)$ for all $n\ge 2$.
\end{prop}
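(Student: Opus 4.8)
The plan is to deduce the equivalence $E_{com} U(n) \simeq E_{com} SU(n)$ from the homotopy pullback square of Lemma~\ref{grit1.2.8} applied to the covering homomorphism $\pi: U(1) \times SU(n) \to U(n)$ (equivalently, one may use $SU(n) \hookrightarrow U(n)$ together with the product decomposition $U(n) \cong (U(1)\times SU(n))/\mu_n$, where $\mu_n$ is the cyclic group of $n$-th roots of unity embedded diagonally). The key point is that $U(n)$ and $SU(n)$, while not having isomorphic maximal tori, differ only by a central torus factor, and the construction of $E_{com}$ as a principal $G$-bundle over $B_{com} G$ is sensitive only to the bundle data, not to the abelian factor.

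First I would recall that for any compact connected Lie group $G$ there is a fibration $E_{com} G_{\mathds 1} \to B_{com} G_{\mathds 1} \to BG$ obtained by pulling back $EG \to BG$ along $i: B_{com} G_{\mathds 1} \to BG$; equivalently, $E_{com} G_{\mathds 1}$ is the homotopy fiber of the composite $B_{com} G_{\mathds 1} \to BG \to BG$, i.e.\ it fits into a homotopy pullback square with corners $E_{com} G_{\mathds 1}$, $EG \simeq *$, $B_{com} G_{\mathds 1}$, $BG$. Next I would apply Lemma~\ref{grit1.2.8} to the covering $SU(n)\times U(1) \to U(n)$ (or directly relate $\widetilde G = SU(n)$-type data to $G=U(n)$), producing a homotopy pullback square relating $B_{com} SU(n)_{\mathds 1}$, $BSU(n)$, $B_{com} U(n)_{\mathds 1}$, $BU(n)$. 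The plan is then to paste this square together with the path-space fibration squares for $U(n)$ and $SU(n)$: since $BU(n) \simeq BSU(n) \times BU(1)$ and the map $BSU(n)\to BU(n)$ is (up to homotopy) the inclusion of the first factor, taking homotopy fibers of the vertical maps in the pasted diagram kills the $BU(1)$ factor and identifies the homotopy fiber of $B_{com} U(n)_{\mathds 1} \to BU(n)$ with that of $B_{com} SU(n)_{\mathds 1} \to BSU(n)$. By the pasting lemma for homotopy pullbacks, the outer square is again a homotopy pullback, and chasing the corners yields $E_{com} U(n) \simeq E_{com} SU(n)$. One should also note $B_{com} G = B_{com} G_{\mathds 1}$ here since $U(n)$ and $SU(n)$ are connected, so no basepoint-component subtlety arises.

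Concretely, I expect the argument to run as follows: form the commutative cube whose top face is the pullback square of Lemma~\ref{grit1.2.8} for $\widetilde G \to G$ with $\widetilde G = SU(n)$, $G = U(n)$ (after absorbing the $U(1)$-factor), and whose vertical maps to the bottom face are the universal bundle projections landing in a point; identify the bottom face as the pullback square $BSU(n)\to BU(n)\leftarrow EU(n)$ against $ESU(n)$, whose total homotopy fiber comparison is controlled by the split fibration $BU(1) \to BU(n) \to BSU(n)$... wait, more cleanly: use that the homotopy fiber of $B_{com} G_{\mathds 1}\to BG$ is $E_{com} G_{\mathds 1}$ by definition, and that Lemma~\ref{grit1.2.8} says $B_{com}\widetilde G_{\mathds 1} = B_{com} G_{\mathds 1}\times_{BG} B\widetilde G$; taking homotopy fibers of the maps to $B\widetilde G$ on the right and to $BG$ after composing $B\widetilde G\to BG$ on the left, and using that homotopy fibers are preserved under base change, gives $E_{com}\widetilde G_{\mathds 1} \simeq E_{com} G_{\mathds 1}$ directly, because the homotopy fiber of $B_{com} G_{\mathds 1}\times_{BG}B\widetilde G \to B\widetilde G$ equals the homotopy fiber of $B_{com} G_{\mathds 1}\to BG$.

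The main obstacle is bookkeeping rather than conceptual: one must be careful that Lemma~\ref{grit1.2.8} is stated for a covering homomorphism $\widetilde G \to G$, and $SU(n)\to U(n)$ is \emph{not} a covering (it is a closed subgroup inclusion, not surjective on $\pi_0$-components of fibers in the covering sense), so the lemma must be invoked for $SU(n)\times U(1)\to U(n)$, which genuinely is an $n$-fold covering homomorphism; then one has to check that $E_{com}(SU(n)\times U(1)) \simeq E_{com} SU(n)$ because $E_{com}(H\times A) \simeq E_{com} H \times E_{com} A \simeq E_{com} H \times EA \simeq E_{com} H$ for $A$ abelian (as $B_{com} A = BA$ and the $A$-bundle is the universal one, so its total space is contractible). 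Assembling these identifications — the product formula for $E_{com}$ of a product, contractibility of $E_{com}$ of a torus, and the homotopy-fiber-preservation in Lemma~\ref{grit1.2.8} — gives the result; I would present the homotopy-fiber argument as the spine and relegate the product and torus facts to short remarks.
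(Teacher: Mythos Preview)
Your proposal is correct and, once you settle on the clean version in your last two paragraphs, it is essentially identical to the paper's proof: apply Lemma~\ref{grit1.2.8} to the genuine covering $S^1 \times SU(n) \to U(n)$, use that in a homotopy pullback square the homotopy fibers of parallel maps coincide to identify $E_{com} U(n)$ with $E_{com}(S^1 \times SU(n))$, and then use $E_{com}(S^1 \times SU(n)) \simeq ES^1 \times E_{com} SU(n) \simeq E_{com} SU(n)$. The paper's write-up is just the streamlined version of what you arrive at after discarding the false start with $SU(n) \hookrightarrow U(n)$ and the unnecessary cube-pasting.
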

\begin{proof}
    We have a covering homomorphism $S^1 \times SU(n) \longrightarrow U(n)$ given by $(\lambda, g) \longmapsto \lambda g$. Applying the above lemma gives us the following homotopy pullback square:
    \[ \begin{tikzcd}   
    B S^1 \times B_{com} SU(n) \ar[r] \ar[d] & BS^1 \times BSU(n) \ar[d] \\
    B_{com} U(n) \ar[r] & BU(n)
\end{tikzcd} \]
The homotopy fibers of the horizontal maps are homotopy equivalent and hence $$E_{com} U(n) \simeq ES^1 \times E_{com} SU(n) \simeq E_{com} SU(n). $$
\end{proof} 

By computing the $H^* (E_{com} U(n))$, we also determine $H^* (E_{com} SU(n))$ as well by Proposition~\ref{un=sun}. However, it is important to note that the cohomology of $B_{com} U(n)$ differs from that of $B_{com} SU(n)$. The case $n=2$ was computed by Antol{\'i}n-Gritschacher-Villarreal \cite{antolin-camarena-villarreal}.
\section{Cohomology of The Spaces in the Homotopy Colimit} \label{section:2.2}


In this section, we focus on the cohomology of the spaces involved in the homotopy colimit described in Theorem~\ref{adem-gomez-hocolim} for $E_{com} U(3)$. First let us describe the spaces $\mathcal{H}_{U(3)} (\ii)$ for $ \ii \in \mathcal{S} (2)$. Recall from Example~\ref{exTU(n)} that \[ \mathcal{T} (U(3)) = (\Fl_{\lambda^1} (\C)/\Sigma_{\lambda^1}) \sqcup (\Fl_{\lambda^2} (\C)/\Sigma_{\lambda^2}) \sqcup  (\Fl_{\lambda^3} (\C) / \Sigma_{\lambda^3}) , \] where $\lambda^1 = (1,1,1)$, $ \lambda^2 = (1,2)$, and $ \lambda^3 = (3)$. From Example~\ref{exTU(n)}, $\Delta = \{ \alpha_1, \alpha_2 \}$ is a set of simple roots for the root system associated with the maximal torus $T(3) \subset U(3)$. The subsets of $\Delta$ are $\oldemptyset, \{\alpha_1\}, \{\alpha_2\}, $ and $\Delta$. From our discussions in Example~\ref{exTU(n)}, $T_{\oldemptyset}$ is conjugated to $T_{I(\lambda^1)}$, $T_{\{\alpha_1\}}$ and $T_{\{\alpha_2\}}$ are conjugated to $T_{I(\lambda^2)}$ and $T_{\Delta}$ is conjugated to $T_{I(\lambda^3)}$. Let us denote by $T(1)$ and $T(2)$ the tori \[ T(1):= \{ \mathrm{diag} (x_1, x_1 ,x_1) \mid x_1 \in S^1 \} \quad \text{and} \quad T(2):= \{ \mathrm{diag} (x_1, x_1 ,x_2) \mid x_i \in S^1 \} \] in $U(3)$ and by $N(d)$ the normalizer $N_{U(3)} (T(d))$. Then $\mathcal{T} (U(3))$ is given by \[ \mathcal{T} (U(3)) \cong \{ * \} \sqcup U(3)/N(2) \sqcup U(3)/N(3).  \] As, $T_{\{ \alpha_1 \}} = T_{(12)\{ \alpha_2\}}$ from our previous description of $\mathcal{E} (\ii)$ as equivalence classes of sequences of sets of roots, we see that $\mathcal{E} (\ii)$ consists of a single equivalence class for all $\ii \in \mathcal{S} (2)$. \\


The normalizer of a chain $T_{I_0} \subset \cdots \subset T_{I_k}$ in $U(3)$ is $N (T_{I_k})$ as $N (T_{I_r}) \subseteq N (T_{I_{r+1}} )$. We can now describe all the spaces $\mathcal{H}_{U(3)} (\ii)$ for $\ii \in \mathcal{S} (2)$. From now on we will denote $\mathcal{H}_{U(3)}$ by $\mathcal{H}$. We have \begin{align*}
    \mathcal{H} (0) &= U(3) \times_{N(1)} U(3)/T(1) \cong PU(3), \\
    \mathcal{H} (1) &= U(3) \times_{N(2)} U(3)/T(2) \cong  U(3)/N(2) \times U(3)/T(2), \\
    \mathcal{H} (2) &= U(3) \times_{N(3)} U(3)/T(3) \cong U(3)/T(3) \times_{\Sigma_3} U(3)/T(3), \\
    \mathcal{H} (0,1) &= U(3) \times_{N(2)} U(3)/T(1) \cong U(3)/N(2) \times PU(3), \\
    \mathcal{H} (0,2) &= U(3) \times_{N(3)} U(3)/T(1) \cong U(3)/N(3) \times PU(3), \\
    \mathcal{H} (1,2) &= U(3) \times_{N(3)} U(3)/T(2) \cong U(3)/T(3) \times_{\Sigma_3} U(3)/T(2), \\
    \mathcal{H} (0,1,2) &= U(3) \times_{N(3)} U(3)/T(1) \cong U(3)/N(3) \times PU(3) . 
\end{align*} 

\subsection{Cohomology of \texorpdfstring{$U(3)/N(3)$}{U(3)/N(3)}} \label{subsection:2.2.1}

In order to compute the cohomology of $U(3)/N(3)$, we use the Serre spectral sequence associated with the fiber sequence 
\begin{equation} \label{ssorder3}
    U(3)/T(3) \longrightarrow U(3)/N(3) \longrightarrow B\Sigma_3. 
\end{equation} 
The $E_2$ page of this spectral sequence is given by $E_2^{p,q} = H^p (B\Sigma_3 ; H^q (U(3)/T(3))$ and the spectral sequence converges to $H^{*} (U(3)/N(3))$. It's worth noting that $U(3)/T(3)$ can be seen as the complete complex flag manifold of order $3$, which is denoted by $\Fl_3 (\C)$. The symmetric group $\Sigma_3$ acts on $\Fl_3 (\C)$ naturally by permuting the flags. Consequently, $U(3)/N(3)$ is identified as the unordered flag manifold of order $3$, denoted by $\UFl_3 (\C)$. We will use this identification going forward. The following result is due to Borel. 

\begin{theo} (\cite{fulton1996}) \label{cohoflagcom}
Let $H^* (BT(n) ; \Z) = \Z[x_1, \dots , x_n] $ with $|x_i| = 2$. Then
the cohomology ring $H^*(\Fl_n (\C) ; \mathbb{Z})$  is given by
\[H^*( \Fl_n (\C) ; \mathbb{Z})\cong \frac{\mathbb{Z}[x_1, x_2, \dots, x_n]}{(\sigma_1, \sigma_2, \dots, \sigma_n)}  \]
where $\sigma_i(x_1, x_2, \dots, x_n)$ are elementary symmmetric functions. 
\end{theo}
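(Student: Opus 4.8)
The plan is to write down an explicit ring homomorphism from the claimed presentation into $H^*(\Fl_n(\C);\Z)$ and prove it is an isomorphism by combining a surjectivity argument (via an iterated projective-bundle structure and the Leray--Hirsch theorem) with an additive rank count in each degree.

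First I would introduce the tautological line bundles. A point of $\Fl_n(\C)$ is a complete flag $0 = V_0 \subset V_1 \subset \cdots \subset V_n = \C^n$, and for $1 \le i \le n$ let $L_i$ be the line bundle whose fiber at this flag is $V_i/V_{i-1}$. Put $x_i := -c_1(L_i) \in H^2(\Fl_n(\C);\Z)$. Since $L_1 \oplus \cdots \oplus L_n$ is canonically isomorphic to the trivial bundle $\underline{\C}^n$, the Whitney sum formula gives $\prod_{i=1}^n (1 - x_i) = 1$, i.e. $\sigma_j(x_1,\dots,x_n) = 0$ in $H^*(\Fl_n(\C);\Z)$ for all $j \ge 1$. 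Hence $x_i \mapsto x_i$ defines a graded ring homomorphism
\[ \phi : \frac{\Z[x_1,\dots,x_n]}{(\sigma_1,\dots,\sigma_n)} \longrightarrow H^*(\Fl_n(\C);\Z), \]
and replacing $x_i$ by $-x_i$ shows the sign in the definition is immaterial. (An alternative setup, which I will not pursue, realizes these relations through the fibration $\Fl_n(\C) \to BT(n) \to BU(n)$ and an Eilenberg--Moore argument.)

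Next I would compare the two sides additively. On the right, the Schubert cell decomposition of $\Fl_n(\C)$ indexed by $\Sigma_n$ is a CW structure with exactly $n!$ cells, all of even real dimension, with the number of cells of real dimension $2d$ equal to the number of permutations having $d$ inversions; hence $H^*(\Fl_n(\C);\Z)$ is free abelian with Poincar\'e series $\prod_{i=1}^n (1 + t^2 + \cdots + t^{2(i-1)})$. On the left, $\Z[x_1,\dots,x_n]$ is free as a module over the subring $\Z[\sigma_1,\dots,\sigma_n]$ of symmetric polynomials, with basis the staircase monomials $\{ x_1^{a_1}\cdots x_n^{a_n} : 0 \le a_i \le n-i \}$; reducing mod $(\sigma_1,\dots,\sigma_n)$ shows the coinvariant algebra is free abelian on these $n!$ monomials, so it has the same Poincar\'e series. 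Thus $\phi$ is a homomorphism of finitely generated free abelian groups that agree in rank in every degree. To prove $\phi$ is surjective I would induct on $n$ using the fibration $\pi : \Fl_n(\C) \to \CP^{n-1}$ sending a flag to its line $V_1$, whose fiber over a line $\ell$ is the flag manifold of $\C^n/\ell$, i.e. $\Fl_{n-1}(\C)$: the class $x_1$ is (up to sign) $\pi^*$ of the hyperplane generator of $H^*(\CP^{n-1};\Z)$, while $x_2,\dots,x_n$ restrict on the fiber to the analogous tautological classes of $\Fl_{n-1}(\C)$, which by induction generate the fiber cohomology; Leray--Hirsch then makes $H^*(\Fl_n(\C);\Z)$ a free module over $H^*(\CP^{n-1};\Z)$ on any lift of a fiber basis, so $x_1,\dots,x_n$ generate the whole ring, giving surjectivity of $\phi$. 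A surjection of finitely generated free abelian groups of equal rank in each degree is an isomorphism, completing the proof.

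The main obstacle is making the Leray--Hirsch step genuinely rigorous over $\Z$ (rather than merely over $\Q$): one must verify that the fiber restriction $H^*(\Fl_n(\C);\Z) \to H^*(\Fl_{n-1}(\C);\Z)$ is actually surjective, which rests on identifying $x_2,\dots,x_n$ restricted to the fiber with the tautological classes of $\C^n/\ell$, and one must confirm the commutative-algebra input that the coinvariant ring is $\Z$-free on the staircase monomials with the stated degrees. Both are standard, but they are the only points requiring care; everything else in the argument is formal.
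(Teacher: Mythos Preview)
The paper does not prove this theorem at all: it is stated as a classical result due to Borel and simply cited to \cite{fulton1996}, with no argument given. So there is no ``paper's own proof'' to compare against.

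Your proposal is a correct and standard route to Borel's presentation. The overall architecture---define the tautological line bundles $L_i$, use $\bigoplus_i L_i \cong \underline{\C}^n$ and the Whitney formula to kill the $\sigma_j$, then match Poincar\'e series via Schubert cells on one side and the staircase basis of the coinvariant ring on the other, and finally establish surjectivity by the iterated $\CP^{n-1}$-bundle tower and Leray--Hirsch---is exactly one of the textbook proofs. Your self-identified cautions (integral Leray--Hirsch and $\Z$-freeness of the coinvariant algebra) are indeed the only nontrivial inputs, and both are classical. One small point worth tightening: the theorem as stated in the paper names the $x_i$ as the generators of $H^*(BT(n);\Z)$, so strictly speaking the map you want is the composite $\Z[x_1,\dots,x_n]=H^*(BT(n);\Z)\to H^*(\Fl_n(\C);\Z)$ induced by the classifying map of $L_1\oplus\cdots\oplus L_n$; you should remark that under this map the generators go to (minus) your $c_1(L_i)$, so that your $\phi$ really is the map implicit in the statement. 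Otherwise the argument is complete.
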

By the above theorem, $H^* (\Fl_3 (\C); \mathbb{Z} )$ is torsion free, of total rank $|G| = 6$, with even dimensional cohomology concentrated in degrees $0,2,4,6$. To determine $E_2^{p,q}$, we describe $H^d ( \Fl_3 (\C) ; \mathbb{Z})$ as integral representations $M_d$ of $\Sigma_3$ for $d=0,2,4,6$ and compute the group cohomology $H^* (\Sigma_3; M_d)$. The action of $\Sigma_3$ on $H^d (\Fl_3 (\C); \mathbb{Z})$ is via
\begin{itemize}
    \item the trivial representation $\Z$ in degree $d=0$,
    \item the sign representation $S$ in degree $d=6$,
    \item the standard representation $M$ and $M'$ in degrees $d=2$ and $d=4$ respectively.
\end{itemize}
Also, note that $\{ x_1^{i_1} \cdot x_2^{i_2} \cdot x_3^{i_3} | i_j \le 3-j \} $ forms a basis for $H^* (\Fl_3 (\C) ; \mathbb{Z})$ and hence we can write
\begin{align*}
    H^0 (\Fl_3 (\C) ; \mathbb{Z}) &= \mathbb{Z},  \\
    H^2 (\Fl_3 (\C) ; \mathbb{Z}) &= \frac{ \mathbb{Z} \{ x_1 ,x_2 ,x_3 \} }{(x_1+x_2+x_3)} =: M, \\
    H^4 (\Fl_3 (\C) ; \mathbb{Z}) &= \frac{ \mathbb{Z} \{ x_1 x_2 , x_2 x_3 ,x_3 x_1 \}}{(x_1 x_2 + x_2 x_3 +x_1 x_3)}  =: M' \cong M ,\\
    H^6 (\Fl_3 (\C) ; \mathbb{Z} ) &=  \mathbb{Z} \{x_1^2 x_2 \} := S.  
\end{align*}
As representations of $\Sigma_3$, the modules $M$ and $ M'$ are isomorphic as they are both quotients of the standard rank three permutation module by its rank one submodule of invariants. More precisely, $M \cong M'$ can be described via the short exact sequence \begin{equation} \label{eq:M}
    0 \longrightarrow \mathbb{Z} \longrightarrow \mathbb{Z} [C_3] \longrightarrow M \longrightarrow 0 
\end{equation} 
where $C_3 \cong A_3 \subset \Sigma_3$ is isomorphic to the alternating group.
Similarly, the sign representation $S$ can be described via the short exact sequence \begin{equation} \label{eq:S}
    0 \longrightarrow \mathbb{Z} \longrightarrow \mathbb{Z} [C_2] \longrightarrow S \longrightarrow 0 
\end{equation} where $C_2 := \Sigma_3 / C_3$ is isomorphic to $\mathbb{Z}/2$. Also, recall that the group cohomology of $\Sigma_3$ is given by \[ H^d (\Sigma_3 ; \Z) = \begin{cases} \mathbb{Z}/2 & d = 4k+2 \\ \mathbb{Z}/6 & d=4k \\ 0 & \text{otherwise} \end{cases} \]
From the long exact sequences in group cohomology associated with the above short exact sequences we obtain \begin{equation} \label{3MS}
    H^d (\Sigma_3; M)_3 = \begin{cases}
    \mathbb{Z}/3 & d=4k+3 \\ 0 & \text{otherwise}
\end{cases} \quad  \quad  H^d (\Sigma_3 ; S)_3 = \begin{cases}
    \mathbb{Z}/3 & d=4k+2 \\ 0 & \text{otherwise}
\end{cases}
\end{equation} 
Now let us consider the Serre spectral sequence with the $3$-adic coefficients $\mathbb{Z}_3$ associated to (\ref{ssorder3}). We know that $ E_2^{0,0} \cong \mathbb{Z}_3$ and $E_2^{0,j} = H^j (\Fl_3 (\mathbb{C}) , \mathbb{Z}_3)^{\Sigma_3} = 0$ for all $j>0$. Moreover, for the rows in the spectral sequence, we have for all $i\ge 0$ \begin{align*}
    E_2^{4(i+1), 0} \cong \mathbb{Z}/3 , &\quad E_2^{4i+3, 2} \cong  \mathbb{Z}/3 \\
     E_2^{4i+3, 4} \cong \mathbb{Z}/3, &\quad  E_2^{4i+2, 6} \cong \mathbb{Z}/3.
\end{align*}
All other terms on the $ E_2$ page are zero (see Figure~\ref{fig:ss3U3E2}).
\begin{figure}
    \centering
    \begin{tikzpicture}
    \matrix (m) [matrix of math nodes,
    nodes in empty cells,nodes={minimum width=5ex,
    minimum height=5ex,outer sep=-5pt},
    column sep=.5ex,row sep=2ex]{
                &      &     &     & \\
          6     &  0 &  0  & \mathbb{Z}/3 & 0 & 0 & 0 & \mathbb{Z}/3 & 0 & 0 & \cdots \\
          4     &  0  & 0  & 0  & \mathbb{Z}/3 & 0 & 0 & 0 & \mathbb{Z}/3 & 0 &\cdots  \\
          2     &  0  & 0  & 0  & \mathbb{Z}/3 & 0 & 0 & 0 & \mathbb{Z}/3 & 0 &\cdots  \\
          0     &  \mathbb{Z}_3 &  0  & 0 & 0 & \mathbb{Z}/3 & 0 & 0 & 0 & \mathbb{Z}/3 &\cdots \\
    \quad\strut &   0  &  1  &  2  & 3 & 4 & 5 & 6 & 7 & 8 & \strut \\};
    \draw[-stealth]  (m-2-4.south east) -- (m-4-9.north west);
    \draw[-stealth]  (m-3-5.south east) -- (m-5-10.north west);

    \draw[thick] (m-1-1.east) -- (m-6-1.east) ;
    \draw[thick] (m-6-1.north) -- (m-6-11.north) ;
    \end{tikzpicture}
    \caption{The $E_5$-page of the Serre Spectral Sequence associated with (\ref{ssorder3}) with $3$-adic Coefficients.}
    \label{fig:ss3U3E2}
\end{figure}

Considering possible differentials we notice that $d_3 $ and $d_7$ must be zero as $E_3^{4i+5, 4} = E_3^{4i+6,2} = E_3^{4i+6,0} = 0$ and $E_7^{4i+9, 0} = 0$. As the spectral sequence converges to the cohomology of a six-dimensional manifold $\overline{\Fl}_3(\mathbb{C})$, we must have $d_5 \neq 0$ and for all $i\ge 0$ \begin{align*}
    d_5 : E_5^{4i+2, 6} \xrightarrow{\cong} E_5^{4(i+1)+3, 2} \\
    d_5 : E_5^{4i+3, 4} \xrightarrow{\cong} E_5^{4(i+1)+4, 0}
\end{align*}
Hence, the only terms remaining on the $E_{\infty}$ page are $E_{\infty}^{3,2} = E_{\infty}^{4,0} \cong \mathbb{Z}/3$. Therefore we obtain the following theorem.
\begin{theo}
    The $3$-adic cohomology of $\UFl_3 (\C)$ is given by \[ H^d (\UFl_3 (\C); \mathbb{Z}_3) \cong \begin{cases}
        \mathbb{Z}_3 & d = 0 \\ \mathbb{Z}/3 & d=4,5 \\ 0 & \text{otherwise}
    \end{cases} \]
\end{theo}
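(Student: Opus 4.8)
The plan is to read the statement off the Serre spectral sequence of (\ref{ssorder3}) with $\mathbb{Z}_3$-coefficients, essentially packaging the analysis already carried out above into three steps: identify the $E_2$-page, show that $d_5$ is the only possibly nonzero differential, and force $d_5$ to be an isomorphism on the relevant entries using that $\UFl_3(\C) = U(3)/N(3)$ is a closed manifold of dimension $\dim U(3) - \dim T(3) = 9 - 3 = 6$ (the fibre $\Sigma_3 = N(3)/T(3)$ being finite).

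First I would record the $E_2$-page. By Theorem~\ref{cohoflagcom} the groups $H^q(\Fl_3(\C); \mathbb{Z}_3)$ are free over $\mathbb{Z}_3$, concentrated in $q = 0, 2, 4, 6$, with $\Sigma_3$-module structure $\mathbb{Z}_3$, $M$, $M' \cong M$, $S$; since each of $M$, $M'$, $S$ has no nonzero invariants we get $E_2^{0,0} \cong \mathbb{Z}_3$ and $E_2^{0,q} = 0$ for $q > 0$, while the $3$-primary group cohomology computations (\ref{3MS}) give that the other nonzero entries are exactly $E_2^{4(i+1),0}$, $E_2^{4i+3,2}$, $E_2^{4i+3,4}$, $E_2^{4i+2,6}$, each $\cong \mathbb{Z}/3$, for $i \ge 0$ (Figure~\ref{fig:ss3U3E2}).

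Next I would eliminate differentials. Because every nonzero entry lies in an even row $q \in \{0,2,4,6\}$, the differentials $d_2, d_4, d_6$, which change $q$ by an odd amount, vanish automatically, and $d_r = 0$ for $r \ge 8$ since then $q - r + 1 < 0$ on the fibre; a glance at the $p$-coordinates of the surviving entries shows that $d_3$ and $d_7$ land in columns carrying no group, so they vanish too. Hence $E_2 = E_5$, $E_6 = E_\infty$, and the only surviving differential is $d_5$, which can only be one of the maps $E_5^{4i+2,6} \to E_5^{4(i+1)+3,2}$ or $E_5^{4i+3,4} \to E_5^{4(i+2),0}$.

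Finally, apart from $E_5^{0,0}$, $E_5^{3,2}$, and $E_5^{4,0}$, every entry of the $E_5$-page lies in total degree $\ge 7$ and is matched by one of these two periodic families of $d_5$-maps with another entry of total degree $\ge 7$; since $H^d(\UFl_3(\C); \mathbb{Z}_3) = 0$ for $d > 6$, each such $d_5$ must be nonzero, hence an isomorphism, source and target both being $\mathbb{Z}/3$. Thus $E_\infty$ is supported in bidegrees $(0,0)$, $(4,0)$, $(3,2)$ with values $\mathbb{Z}_3$, $\mathbb{Z}/3$, $\mathbb{Z}/3$; with at most one nonzero group in each total degree there are no extension problems, which gives the claimed answer. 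The only delicate point — the \emph{hard part} — is the bookkeeping in this last step: one must check that the two families of $d_5$-maps account for \emph{every} class of total degree $\ge 7$, so that six-dimensionality really does force all of them to be isomorphisms and leaves precisely $H^0 \cong \mathbb{Z}_3$ and $H^4 \cong H^5 \cong \mathbb{Z}/3$.
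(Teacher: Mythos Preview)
Your proof is correct and follows essentially the same route as the paper: both identify the $E_2$-page via (\ref{3MS}), rule out all differentials except $d_5$ by inspection of the pattern of nonzero entries, and then use that $\UFl_3(\C)$ is a $6$-manifold to force each $d_5$ between the $\mathbb{Z}/3$'s in total degree $\ge 7$ to be an isomorphism. Your write-up is slightly more explicit about the parity reasons for the vanishing of $d_2, d_4, d_6$ and about the bijective matching of sources and targets of $d_5$, but the argument is the same.
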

\begin{cor}
    The Poincar\'e series of the mod $3$ cohomology of $\UFl_3 (\C) $ is given by \[ \Pi_{\UFl_3 (\C)}^3 (t) = 1+t^3 + 2t^4 +t^5  .  \]
\end{cor}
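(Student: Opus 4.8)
The plan is to deduce the mod $3$ Betti numbers directly from the $3$-adic integral cohomology of $\UFl_3 (\C)$ computed in the preceding theorem, by a standard universal coefficient argument relating $\Z_3$-coefficients to $\F_3$-coefficients. Concretely, I would apply $H^* (\UFl_3 (\C); -)$ to the coefficient sequence $0 \to \Z_3 \xrightarrow{\cdot 3} \Z_3 \to \F_3 \to 0$; the resulting Bockstein long exact sequence breaks, for each $d$, into a short exact sequence
\[ 0 \longrightarrow H^d (\UFl_3 (\C);\Z_3) \otimes \F_3 \longrightarrow H^d (\UFl_3 (\C);\F_3) \longrightarrow \mathrm{Tor}\bigl(H^{d+1} (\UFl_3 (\C);\Z_3), \F_3\bigr) \longrightarrow 0. \]
Since all the groups involved are finite-dimensional $\F_3$-vector spaces, this yields $\dim_{\F_3} H^d (\UFl_3 (\C);\F_3) = \dim_{\F_3}\bigl(H^d (\UFl_3 (\C);\Z_3)\otimes\F_3\bigr) + \dim_{\F_3}\mathrm{Tor}\bigl(H^{d+1} (\UFl_3 (\C);\Z_3),\F_3\bigr)$.

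Next I would substitute the values $H^0 \cong \Z_3$, $H^4 \cong H^5 \cong \Z/3$, and $H^d = 0$ otherwise from the theorem. The free summand $H^0 \cong \Z_3$ contributes a single $\F_3$ in degree $0$ through the tensor term and nothing through $\mathrm{Tor}$. Each torsion summand $\Z/3$ contributes one $\F_3$ via $\otimes\,\F_3$ in its own degree and one $\F_3$ via $\mathrm{Tor}$ in the degree immediately below: so $H^4 \cong \Z/3$ gives $\F_3$ in degrees $4$ and $3$, while $H^5 \cong \Z/3$ gives $\F_3$ in degrees $5$ and $4$. Collecting contributions degree by degree gives dimensions $1,0,0,1,2,1$ in degrees $0$ through $5$ and $0$ elsewhere, hence $\Pi_{\UFl_3 (\C)}^3 (t) = 1 + t^3 + 2t^4 + t^5$.

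I do not expect any genuine obstacle: the Corollary is a purely formal consequence of the theorem. The one point deserving a moment's care is the degree shift in the $\mathrm{Tor}$ term, which is precisely what moves one of the two $\Z_3$-torsion classes down from degree $4$ to degree $3$, producing the $t^3$, and the other down from degree $5$ to degree $4$, producing the second $t^4$.
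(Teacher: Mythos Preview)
Your argument is correct and is exactly the standard universal coefficient computation that the paper implicitly relies on; the paper states the corollary without proof, as an immediate consequence of the preceding theorem, and your Bockstein/UCT bookkeeping is the intended content.
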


We now focus on the mod 2 case. We have an isomorphism $H^* (\Sigma_3; \mathbb{Z})_2 \cong H^* (C_2 ; \mathbb{Z})_2$ induced by the inclusion $\mathbb{Z} \hookrightarrow \mathbb{Z} [\Sigma_3 / C_2]$. So, we see that $H^d (\Sigma_3; M)_2 = 0$ for all $i\ge 0$. From the long exact sequence in group cohomology associated with the short exact sequence \[  0 \longrightarrow \mathbb{Z} \longrightarrow \mathbb{Z} [\Sigma_3/C_3] \longrightarrow S \longrightarrow 0,  \] 
we deduce that $H^d (\Sigma_3 ; S)_2 \cong H^{d+1} (\Sigma_3 ; \mathbb{Z})_2$. So, \begin{equation} \label{2MS}
    H^d (\Sigma_3 ; S)_2 \cong \begin{cases}
    \mathbb{Z}/2 & d = 2k+1 \\
    0 & \text{otherwise}
\end{cases} 
\end{equation} 
 As before, we consider the same Serre spectral sequence (\ref{ssorder3}), but now with the $2$-adic coefficients $\mathbb{Z}_2$. We have that $E_2^{0,0} \cong \mathbb{Z}_2$ and for all $i \ge 0$ \begin{align*}
     E_2^{2i+1, 6} \cong \mathbb{Z}/2, \quad E_2^{2i+2, 0} \cong \mathbb{Z}/2  
 \end{align*}
and all other terms on the $E_2$ page are zero (see Figure~\ref{fig:ss2U3E2}). Looking at the possible non-zero differentials, the only possibility is \[ d_7: E_7^{2i+1, 6} \to E_7^{2i+8, 0} \] 
which must be an isomorphism for $i\ge 0$, for dimension reasons.

\begin{figure}
    \centering
    \begin{tikzpicture}
  \matrix (m) [matrix of math nodes,
    nodes in empty cells,nodes={minimum width=5ex,
    minimum height=5ex,outer sep=-5pt},
    column sep=.5ex,row sep=2ex]{
                &      &     &     & \\
          6     &  0 &  \mathbb{Z}/2  & 0 & \mathbb{Z}/2 & 0 & \mathbb{Z}/2 & \cdots \\
          \vdots&    &       &   &     &   &     &      \\
          0     &  \mathbb{Z}_2 &  0  & \mathbb{Z}/2 & 0 & \mathbb{Z}/2 & 0 & \mathbb{Z}/2 & 0 & \mathbb{Z}/2 & \cdots \\
    \quad\strut &   0  &  1  &  2  & 3 & 4 & 5 & 6 & 7 & 8 & \strut \\};
    \draw[-stealth]  (m-2-3.south east) -- (m-4-10.north west);
    \draw[thick] (m-1-1.east) -- (m-5-1.east) ;
    \draw[thick] (m-5-1.north) -- (m-5-11.north) ;
    \end{tikzpicture}
    \caption{The $E_7$-page of the Serre Spectral Sequence associated with (\ref{ssorder3}) with $2$-adic Coefficients.}
    \label{fig:ss2U3E2}
\end{figure}

Hence the only surviving terms on the $E_{\infty}$ page are $E_{\infty}^{0,0} = \Z_2 $ and $E_{\infty}^{2,0} = E_{\infty}^{4,0} =E_{\infty}^{6,0} \cong \mathbb{Z}/2$. Therefore, we obtain the following theorem.
\begin{theo} \label{coh3com2}
    The $2$-adic cohomology of $\UFl_3 (\C)$ is given by \[ H^d (\UFl_3 (\C); \mathbb{Z}_2) \cong \begin{cases}
        \mathbb{Z}_2 & d = 0 \\ \mathbb{Z}/2 & d=2,4,6 \\ 0 & \text{otherwise}
    \end{cases} \]
\end{theo}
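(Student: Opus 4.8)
The plan is to read the result off the Serre spectral sequence of the fibration (\ref{ssorder3}) with $2$-adic coefficients, whose $E_2$-page and differentials were assembled in the discussion preceding the statement; the work is essentially careful bookkeeping. First I would identify the $E_2$-page as $E_2^{p,q} = H^p(\Sigma_3; H^q(\Fl_3(\C);\Z_2))$. By Borel's theorem (Theorem~\ref{cohoflagcom}) the coefficient module is nonzero only for $q \in \{0,2,4,6\}$: it is the trivial module $\Z_2$ for $q=0$, the standard module $M$ for $q=2,4$ (via (\ref{eq:M})), and the sign module $S$ for $q=6$ (via (\ref{eq:S})).

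Next I would compute the relevant group cohomology mod $2$. Since a Sylow $2$-subgroup of $\Sigma_3$ is $C_2 = \Sigma_3/C_3$ and $2$-torsion is detected there by transfer, the key point is that $M$ restricts to a free $\Z_2[C_2]$-module — which I would verify by writing the rank-three permutation module over $C_2$ as $\Z_2[C_2]\oplus\Z_2$ and tracking the invariant line — so $H^{>0}(\Sigma_3;M)_2 = 0$ while $H^0(\Sigma_3;M) = M^{\Sigma_3} = 0$; hence the rows $q=2,4$ vanish identically. For the sign module, the short exact sequence $0 \to \Z \to \Z[\Sigma_3/C_3] \to S \to 0$ together with Shapiro's lemma (giving $H^*(\Sigma_3;\Z[\Sigma_3/C_3]) \cong H^*(C_3;\Z)$, which is $2$-torsion-free above degree $0$) yields $H^d(\Sigma_3;S)_2 \cong H^{d+1}(\Sigma_3;\Z)_2$, which by the known $H^*(\Sigma_3;\Z)$ equals $\Z/2$ precisely for $d$ odd, as recorded in (\ref{2MS}). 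The bottom row is $H^p(\Sigma_3;\Z)_2 \cong H^p(C_2;\Z)_2$, i.e.\ $\Z_2$ in degree $0$ and $\Z/2$ in each positive even degree. This pins down the $E_2$-page exactly as in Figure~\ref{fig:ss2U3E2}.

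Then I would run the differentials. Because nonzero entries occur only in rows $q=0$ and $q=6$, every $d_r$ with $2 \le r \le 6$ has zero target, so $E_2 = E_7$, and the only candidate differential is $d_7\colon E_7^{p,6} \to E_7^{p+7,0}$. At this point I would invoke the fact that $\UFl_3(\C) = U(3)/N(3)$ is a closed manifold of dimension $\dim U(3) - \dim N(3) = 9 - 3 = 6$, so $H^d(\UFl_3(\C);\Z_2) = 0$ for $d>6$. Convergence then forces $d_7\colon E_7^{2i+1,6} \to E_7^{2i+8,0}$ to be an isomorphism for every $i \ge 0$: both groups are $\Z/2$, surjectivity being needed to kill $H^{2i+8}$ and injectivity to kill $H^{2i+7}$. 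Consequently all of row $q=6$ dies, all of row $q=0$ in total degree $\ge 8$ dies, and the only survivors are $E_\infty^{0,0} = \Z_2$ and $E_\infty^{2,0} = E_\infty^{4,0} = E_\infty^{6,0} = \Z/2$; since these lie on a single horizontal line there are no extension problems, and the stated answer follows.

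The main (and essentially only non-formal) obstacle is the step that forces $d_7$ to be an isomorphism in all degrees simultaneously: the spectral sequence by itself cannot distinguish $d_7 = 0$ from $d_7$ an isomorphism, and it is precisely the vanishing $H^{>6}(\UFl_3(\C);\Z_2) = 0$ coming from the closed-manifold structure that resolves the ambiguity. Everything else — Borel's description of $H^*(\Fl_3(\C))$, the identification of the $\Sigma_3$-action in each degree, and the mod $2$ group cohomology of $\Sigma_3$ via its Sylow $2$-subgroup — is standard.
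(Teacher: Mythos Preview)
Your proposal is correct and follows essentially the same route as the paper: the Serre spectral sequence of (\ref{ssorder3}) with $2$-adic coefficients, the identification of the rows as $\Sigma_3$-modules $\Z_2, M, M, S$, the vanishing of the $M$-rows at the prime $2$, the computation of $H^*(\Sigma_3;S)_2$ via (\ref{eq:S}) and Shapiro, and the forcing of $d_7$ by the dimension of the closed manifold $\UFl_3(\C)$. The only cosmetic difference is your justification that $H^*(\Sigma_3;M)_2=0$ via freeness of $M|_{C_2}$, whereas the paper runs the long exact sequence of (\ref{eq:M}) together with Shapiro; these are two sides of the same transfer argument.
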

As a corollary of the above theorem, we can have the following.
\begin{cor} \label{cohoring3}
    Let $\gamma$ denote the non-zero class in $H^1 (\UFl_3 (\C); \mathbb{F}_2) \cong \mathbb{F}_2$. Then \[  H^* (\UFl_3 (\C); \mathbb{F}_2) \cong \frac{\mathbb{F}_2 [\gamma]}{(\gamma^7)} , \] and the mod $2$ Poincar\'e series is given by \[ \Pi_{\UFl_3 (\C)}^2 (t) = 1+t+t^2 +t^3 +t^4 +t^5 +t^6 . \] 
\end{cor}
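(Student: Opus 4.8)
The plan is to obtain the additive statement essentially for free from Theorem~\ref{coh3com2} and then pin down the ring structure by exploiting the multiplicative structure of the Serre spectral sequence of~(\ref{ssorder3}) together with the classifying map $\UFl_3(\C)\to B\Sigma_3$ it encodes. Applying the universal coefficient theorem to the $2$-adic answer of Theorem~\ref{coh3com2} (namely $\Z_2$ in degree $0$ and $\Z/2$ in degrees $2,4,6$) immediately yields $H^d(\UFl_3(\C);\F_2)\cong\F_2$ for $0\le d\le 6$ and $0$ otherwise, hence the claimed Poincar\'e series $1+t+t^2+t^3+t^4+t^5+t^6$. So the real content is the assertion that, as a ring, this is $\F_2[\gamma]/(\gamma^7)$, i.e.\ that $\gamma^d\neq 0$ for all $d\le 6$.

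Since $\Sigma_3$ acts freely on $\Fl_3(\C)$ with quotient $\UFl_3(\C)$, the fibration~(\ref{ssorder3}) is classified by a map $\UFl_3(\C)\to B\Sigma_3$, and the edge homomorphism of its Serre spectral sequence is a \emph{ring} map $H^*(B\Sigma_3;\F_2)\to H^*(\UFl_3(\C);\F_2)$, where $H^*(B\Sigma_3;\F_2)\cong\F_2[x]$ with $|x|=1$ (the Sylow $2$-subgroup $C_2\le\Sigma_3$ is self-normalizing, so all of $H^*(BC_2;\F_2)$ is stable). I would show this edge map is an isomorphism in degrees $\le 6$ by pinning down the $E_2$-page. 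By Theorem~\ref{cohoflagcom} the mod $2$ cohomology of $\Fl_3(\C)$ sits in degrees $0,2,4,6$; as $\Sigma_3$-modules the degree-$2$ and degree-$4$ parts are $M\otimes\F_2\cong M'\otimes\F_2$, while the degree-$6$ part is $S\otimes\F_2$, which is the \emph{trivial} module because the sign representation reduces trivially mod $2$. The module $M\otimes\F_2$ is the $2$-dimensional irreducible $\F_2[\Sigma_3]$-module; its restriction to $C_2$ is the free module $\F_2[C_2]$, so $H^{>0}(C_2;M\otimes\F_2)=0$, and since $[\Sigma_3:C_2]=3$ is odd, restriction embeds $H^*(\Sigma_3;M\otimes\F_2)$ into $H^*(C_2;M\otimes\F_2)$; as $M\otimes\F_2$ has no nonzero invariants this forces $H^*(\Sigma_3;M\otimes\F_2)=0$ in every degree. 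Hence the rows $q=2,4$ of $E_2$ vanish (as do the odd rows), leaving exactly the rows $q=0$ and $q=6$, each a copy of $\F_2[x]$.

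It follows that the only possible nonzero differential is $d_7\colon E_7^{p,6}\to E_7^{p+7,0}$, and by $E_7^{*,0}$-linearity it is determined by its value on the generator $\tau$ of $E_2^{0,6}$. Since $\UFl_3(\C)=U(3)/N(3)$ is a closed manifold of dimension $\dim U(3)-\dim N(3)=6$, we have $H^{>6}(\UFl_3(\C);\F_2)=0$; if $d_7(\tau)$ were $0$ then $d_7\equiv 0$ and $E_\infty$ would have infinitely many nonzero groups, which is impossible, so $d_7(\tau)=x^7$ and $d_7$ is an isomorphism in every column. Consequently, in each total degree $d\le 6$ the only surviving associated-graded term is $E_\infty^{d,0}=E_2^{d,0}$, so there are no extension problems and the edge map is an isomorphism onto $H^d(\UFl_3(\C);\F_2)$ for $d\le 6$. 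Letting $\gamma$ be the image of $x$ — which is then the nonzero class of $H^1(\UFl_3(\C);\F_2)$ — multiplicativity and $H^*(B\Sigma_3;\F_2)=\F_2[x]$ give $\gamma^d\neq 0$ for $0\le d\le 6$ and $\gamma^7=0$ (as $H^7=0$); since each $H^d(\UFl_3(\C);\F_2)$ with $0\le d\le 6$ is one-dimensional and thus spanned by $\gamma^d$, we conclude $H^*(\UFl_3(\C);\F_2)\cong\F_2[\gamma]/(\gamma^7)$.

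The step I expect to be the main obstacle is the $E_2$-page bookkeeping, and in particular verifying that $H^*(\Sigma_3;M\otimes\F_2)$ vanishes identically so that the spurious rows $q=2,4$ disappear; once that is settled, everything else — the single differential $d_7$ forced by the dimension of the manifold, the absence of extension problems in low degrees, and the identification of the edge homomorphism with $\gamma\mapsto x$ — is either formal or already provided by Theorems~\ref{coh3com2} and~\ref{cohoflagcom}.
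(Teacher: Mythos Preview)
Your argument is correct. The paper states this corollary without proof, presumably regarding it as immediate from the $\Z_2$ spectral sequence computation of Theorem~\ref{coh3com2}, where the $E_\infty$-page is concentrated on the bottom row; your explicit rerun of the Serre spectral sequence of~(\ref{ssorder3}) with $\F_2$ coefficients is the natural way to make the ring statement precise and is entirely in the spirit of the paper's approach.
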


\subsection{Cohomology of \texorpdfstring{$U(3)/T(3) \times_{\Sigma_3} U(3)/T(3)$}{Lg}} \label{subsection:2.2.2}

We proceed similarly as in the previous subsection. With the identification $U(3)/T(3) \cong \Fl_3 (\C)$ we have the fiber sequence \begin{equation} \label{fl3timesfl3}
    \Fl_3 (\C) \times \Fl_3 (\C) \longrightarrow \Fl_3 (\C) \times_{\Sigma_3} \Fl_3 (\C) \longrightarrow B\Sigma_3 
\end{equation} 
  and in the associated Serre spectral sequence, the $E_2$-page is given by \[ E_{2}^{*,*} \cong H^* (B\Sigma_3 ; H^* (\Fl_3 (\C)^{\times 2} ) ), \]  which converges to $H^* (\Fl_3 (\C) \times_{\Sigma_3} \Fl_3 (\C))$. From the K\"unneth theorem and Theorem~\ref{cohoflagcom}, we have \[ H^* (\Fl_3 (\C)^{\times 2} ; \Z) \cong H^* (\Fl_3 (\C) ; \Z) \otimes H^* (\Fl_3 (\C) ; \Z). \]
Note that $H^d (\Fl_3 (\C)^{\times 2} ; \Z) $ is non-zero if and only if $d$ is even. The cohomology groups of $\Fl_3 (\C)^{\times 2}$ can be expressed in terms of integral representation $\Z$, $M$, and $S$ of $\Sigma_3$ as follows: \begin{align*}
    H^0 (\Fl_3 (\C)^{\times 2} ; \mathbb{Z}) &= \mathbb{Z} , \\
    H^2 (\Fl_3 (\C)^{\times 2} ; \mathbb{Z}) &= M \oplus M \\
    H^4 (\Fl_3 (\C)^{\times 2} ; \mathbb{Z}) &= (M' \oplus M') \oplus M^{\otimes 2} \cong (M \oplus M) \oplus M^{\otimes 2}, \\
    H^6 (\Fl_3 (\C)^{\times 2} ; \mathbb{Z} ) &=  (S \oplus S) \oplus (M \otimes M') \oplus (M' \otimes M) \cong (S \oplus S) \oplus (M^{\otimes 2} \oplus M^{\otimes 2} ), \\
    H^8 (\Fl_3 (\C)^{\times 2} ; \mathbb{Z}) &= (M \otimes S) \oplus (S\otimes M) \oplus {M'}^{\otimes 2} \cong M_S \oplus M_S \oplus M^{\otimes 2}, \\
    H^{10} (\Fl_3 (\C)^{\times 2} ; \mathbb{Z}) &= (M \otimes S) \oplus (S\otimes M) \cong M_S \oplus M_S , \\
    H^{12} (\Fl_3 (\C)^{\times 2} ; \mathbb{Z}) &= S \otimes S \cong \mathbb{Z},
\end{align*}
 where $M_S := M\otimes S$. We will compute the $p$-adic cohomology of $\Fl_3 (\C) \times_{\Sigma_3} \Fl_3 (\C)$ using the Serre spectral sequence associated with (\ref{fl3timesfl3}) for $p=2,3$. From (\ref{3MS}) and (\ref{2MS}), we have the additive descriptions of the group cohomologies $H^* (\Sigma_3; M)_p$ and  $H^* (\Sigma_3; S)_p$. To describe the $E_2$ page of the spectral sequence $E$, we require an additive description of the group cohomologies $H^* (\Sigma_3 ; M^{\otimes 2})$ and $H^* (\Sigma_3 ; M_S)$. As the tensor product is only a right exact functor and we lose exactness on the left, we can not use similar methods used in \S \ref{subsection:2.2.1} and make use of the short exact sequences (\ref{eq:M}) and (\ref{eq:S}). Therefore, we use the HAP (\textit{Homological Algebra Programming} \cite{HAP}) package within the computer algebra system GAP (\textit{Groups, Algorithms, and Programming} \cite{GAP4}) to perform the necessary group cohomology computations. Our results are summarized in the following proposition. For more details on these computations, we refer to Appendix~\ref{appendixA.1}.
\begin{prop} \label{3MMSM}
    We have the following group cohomologies \[ 
    H^d (\Sigma_3; M^{\otimes 2})_3 = \begin{cases}
    \Z_3 & d=0 \\ \mathbb{Z}/3 & d=4k+2 \\ 0 & \text{otherwise}
    \end{cases} \quad  \quad  H^d (\Sigma_3 ; M_S)_3 = \begin{cases}
    \mathbb{Z}/3 & d=4k+1 \\ 0 & \text{otherwise}
\end{cases} \]
\end{prop}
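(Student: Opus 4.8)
The goal is to compute the $3$-local group cohomology of the $\Sigma_3$-modules $M^{\otimes 2}$ and $M_S = M \otimes S$. Since we are working $3$-locally, the key observation is that the index $[\Sigma_3 : C_3] = 2$ is invertible in $\Z_3$, so restriction to the Sylow $3$-subgroup $C_3 = A_3$ is injective and identifies $H^*(\Sigma_3; N)_3$ with the $C_2$-invariants $H^*(C_3; N)^{C_2}$ for any $\Z_3[\Sigma_3]$-module $N$, where $C_2 = \Sigma_3/C_3$ acts on the cohomology of $C_3$ in the usual way (conjugation on the group, together with the action on coefficients). So the strategy is: first understand $M$, $S$, $M^{\otimes 2}$, $M_S$ as $\Z_3[C_3]$-modules; then compute $H^*(C_3; -)$ using the standard $2$-periodic resolution for a cyclic group; and finally take $C_2$-invariants.

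\textbf{Step 1: Restrict to $C_3$.} As a $\Z_3[C_3]$-module, $M$ sits in the short exact sequence $0 \to \Z_3 \to \Z_3[C_3] \to M \to 0$ from (\ref{eq:M}), and $S$ restricts to the trivial module $\Z_3$ since $C_2 = \Sigma_3/C_3$ acts through the quotient. Hence $M_S \cong M$ as $\Z_3[C_3]$-modules, and $M^{\otimes 2} \cong M \otimes_{\Z_3} M$. One computes directly that over $\Z_3[C_3]$ one has a decomposition $M \otimes M \cong \Z_3 \oplus M \oplus (\text{something free})$, or more cleanly, that $\Z_3[C_3] \otimes M$ is free (since tensoring a free module with anything is free) and chasing (\ref{eq:M}) tensored with $M$ gives $0 \to M \to \Z_3[C_3]^{?} \to M\otimes M \to 0$; it is easiest to just diagonalize the $C_3$-action on the rank-$4$ module $M \otimes M$ over $\Z_3$ (which contains a cube root of unity after a small extension, or one uses the rational decomposition and a lattice argument). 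The upshot one expects is $H^*(C_3; M^{\otimes 2})_3 = \Z_3$ in degree $0$ and $\Z/3$ in every positive even degree, while $H^*(C_3; M)_3 = \Z/3$ in every positive... actually here one should be careful: $H^0(C_3;M) = M^{C_3} = 0$ since $M$ has no $C_3$-invariants, and $H^{\mathrm{odd}}, H^{\mathrm{even}>0}$ are each $\Z/3$, matching the pattern in (\ref{3MS}) after taking invariants.

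\textbf{Step 2: Take $C_2$-invariants.} The generator of $C_2$ acts on $C_3$ by inversion and on $M$, $S$ by the respective representations. On $H^*(C_3;\Z_3) = \Z_3[\beta]/(3\beta)$-ish (with $|\beta|=2$), inversion on $C_3$ acts on $\beta$ by $-1$, which combined with the coefficient action determines the sign on each graded piece. I would run this bookkeeping degree by degree for $M^{\otimes 2}$ and $M_S$ using the known values of (\ref{3MS}) as a consistency check (e.g., $H^*(\Sigma_3;M)_3 = H^*(C_3;M)^{C_2}$ must give $\Z/3$ exactly in degrees $4k+3$), which pins down the sign conventions. Carrying the same computation through for $M^{\otimes 2}$ and $M_S$ yields the claimed answer: $H^d(\Sigma_3; M^{\otimes 2})_3$ is $\Z_3$ for $d=0$, $\Z/3$ for $d \equiv 2 \pmod 4$, and $0$ otherwise; and $H^d(\Sigma_3; M_S)_3$ is $\Z/3$ for $d \equiv 1 \pmod 4$ and $0$ otherwise.

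\textbf{Main obstacle.} The hard part is Step 1 — getting the $\Z_3[C_3]$-module structure of $M^{\otimes 2}$ exactly right (not just rationally), and then correctly identifying how $C_2$ acts on the resulting cohomology classes, since a sign error anywhere shifts the periodic pattern by one or two degrees. For this reason, and as stated in the excerpt, I would cross-check the final answer with an explicit machine computation using the HAP package in GAP (see Appendix~\ref{appendixA.1}), treating the hand computation above as the conceptual explanation for why the answer has the shape it does.
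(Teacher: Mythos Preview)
Your approach is genuinely different from the paper's. The paper does not give a conceptual argument at all: it simply runs the HAP package in GAP to compute $H^d(\Sigma_3; M^{\otimes 2})$ and $H^d(\Sigma_3; M_S)$ for finitely many $d$, and then invokes the periodicity of $H^*(\Sigma_3; -)$ to extend the pattern (see Appendix~\ref{appendixA.1}). Your route via the isomorphism $H^*(\Sigma_3; N)_3 \cong H^*(C_3; N)^{C_2}$, restriction to the normal Sylow $3$-subgroup, and the $2$-periodic resolution of $C_3$ is the right conceptual framework and would, if carried out, explain \emph{why} the answer is $4$-periodic rather than just confirming that it is. In particular your observation that $M_S|_{C_3} \cong M|_{C_3}$, so that $H^*(\Sigma_3; M_S)_3$ differs from $H^*(\Sigma_3; M)_3$ only by a sign twist in the $C_2$-action, immediately predicts the shift from degrees $4k+3$ to $4k+1$.

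That said, your write-up is a plan rather than a proof, and Step~1 is where the real work is left undone. The sentence ``$M \otimes M \cong \Z_3 \oplus M \oplus (\text{something free})$'' is not right as stated (there is no free summand; rationally $M \otimes M \cong \mathbf{1} \oplus S \oplus M$ as $\Sigma_3$-modules, and $\mathbf{1}^{\oplus 2} \oplus M$ as $C_3$-modules), and you drift between several half-finished approaches before settling on tensoring the sequence~(\ref{eq:M}) with $M$. That last approach does work cleanly: since $\Z[C_3] \otimes_{\Z} M$ is $\Z[C_3]$-free (untwist the diagonal action), the long exact sequence gives $H^i(C_3; M^{\otimes 2}) \cong H^{i+1}(C_3; M)$ for $i \geq 1$, hence $\Z/3$ exactly in positive even degrees. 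You would then still need to pin down the $C_2$-action on these classes (combining inversion on $C_3$ with the coefficient action on $M^{\otimes 2}$), which you defer. None of this is hard, but none of it is actually done in your sketch; the paper sidesteps all of it by letting the machine compute.
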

By utilizing our previous results (\ref{3MS}) and Proposition~\ref{3MMSM}, along with the well-known fact from Group cohomology, \[ H^d (G; \oplus_{i=1}^n A_i ) \cong \bigoplus_{i=1}^n H^d (G; A_i), \] we can completely describe the $E_2$-page of the Serre spectral sequence associated with (\ref{fl3timesfl3}) with $3$-adic coefficients. Refer to Figure~\ref{fig:ss3U3U3E2} for the $E_2$-page. The only possible differential which is non-zero is $d_5$. We have illustrated some of the possible non-zero differentials in Figure~\ref{fig:ss3U3U3E2}. Also, we see that there are no other non-zero higher differentials. Hence, the spectral sequence collapses at the $E_6$-page. As, $|\Sigma_3| = 6$, by a transfer argument $H^* (\Fl_3 (\C) \times_{\Sigma_3} \Fl_3 (\C))$ does not have any $9$-torsion and we have the following. 
\begin{theo}
    The $3$-adic cohomology of $\Fl_3 (\C) \times_{\Sigma_3} \Fl_3 (\C)$ is given by \[ H^d (\Fl_3 (\C) \times_{\Sigma_3} \Fl_3 (\C); \mathbb{Z}_3) \cong \begin{cases}
        \Z_3 & d = 0, 12 \\ \Z_3 \oplus \Z/3  & d = 4 \\ {\Z/3}^{\oplus 2} & d=5 \\ \Z_3^{\oplus 2} \oplus \Z/3 & d=6 \\ \Z/3 & d=7,9  \\  Z_3 \oplus {\Z/3}^{\oplus 2} & d=8 \\  0 & \text{otherwise}
    \end{cases} \]
\end{theo}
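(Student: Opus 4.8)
The plan is to run the Serre spectral sequence $E$ of the fibration (\ref{fl3timesfl3}) with $\Z_3$-coefficients to completion. Its $E_2$-page is already in hand: combining the K\"unneth decomposition of $H^*(\Fl_3(\C)^{\times 2};\Z_3)$ into the $\Sigma_3$-modules $\Z,M,S,M^{\otimes 2},M_S$ listed above with (\ref{3MS}), Proposition~\ref{3MMSM}, the additivity $H^d(\Sigma_3;\bigoplus_i A_i)\cong\bigoplus_i H^d(\Sigma_3;A_i)$, and the known $H^*(\Sigma_3;\Z)_3$, one writes down every entry $E_2^{p,q}=H^p(\Sigma_3;H^q(\Fl_3(\C)^{\times 2}))_3$ (Figure~\ref{fig:ss3U3U3E2}). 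Since $H^q(\Fl_3(\C)^{\times 2})=0$ for $q$ odd, the page lives in even rows, so every even differential is zero and only $d_3,d_5,d_7,\dots$ can be nonzero.

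First I would show $d_3=0$. Running over the support of $E_2$, the only entries that admit a possibly nonzero $d_3$ lie in the $0$-th column, namely $d_3\colon E_3^{0,4}\to E_3^{3,2}$ and $d_3\colon E_3^{0,6}\to E_3^{3,4}$, so it suffices to prove that the $\Sigma_3$-invariants of $H^4$ and $H^6$ of the fibre are permanent cycles. For this I would exhibit explicit classes on the total space: the tautological line bundles $L_1,L_2,L_3$ on $\Fl_3(\C)$ are permuted by the $\Sigma_3$-action, hence for each $k\ge 1$ the bundle $\bigoplus_{i=1}^{3}L_i\otimes(L_i')^{\otimes k}$ on $\Fl_3(\C)^{\times 2}$ (with $L_i,L_i'$ pulled back from the two factors) is $\Sigma_3$-equivariant and produces Chern classes in $H^*(\Fl_3(\C)\times_{\Sigma_3}\Fl_3(\C))$; restricting $c_2$ and $c_3$ to the fibre and reducing modulo $(\sigma_1,\sigma_2,\sigma_3)$ shows that these fibrewise restrictions span $(H^4)^{\Sigma_3}$ and $(H^6)^{\Sigma_3}$. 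Thus those invariants survive and $d_3\equiv 0$.

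The only remaining differential is $d_5$, and pinning it down is the core of the proof. The key structural input is that $\Fl_3(\C)\times_{\Sigma_3}\Fl_3(\C)$ is, up to homotopy, a closed orientable $12$-manifold: the $\Sigma_3$-action on $\Fl_3(\C)^{\times 2}=(U(3)/T(3))^{\times 2}$ is free, and although a simple reflection reverses orientation on a single $\Fl_3(\C)$ factor, the diagonal action preserves orientation. Hence $H^d=0$ for $d>12$, which forces the $d_5$'s out of the high rows to be as large as dimensions permit (for instance $E_5^{3,4}\twoheadrightarrow E_5^{8,0}$, $E_5^{2,6}\twoheadrightarrow E_5^{7,2}$, $E_5^{1,10}\hookrightarrow E_5^{6,6}$, and so on), while the rational cohomology $H^*(\Fl_3(\C)\times_{\Sigma_3}\Fl_3(\C);\Q)\cong H^*(\Fl_3(\C)^{\times 2};\Q)^{\Sigma_3}$ (which is $\Q$ in degrees $0,4,8,12$ and $\Q^{2}$ in degree $6$) fixes the surviving free summands, and Poincar\'e duality together with $d_5^2=0$ propagates the constraints to the remaining entries. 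Naturality against the two projections $\mathrm{pr}_i\colon\Fl_3(\C)\times_{\Sigma_3}\Fl_3(\C)\to\UFl_3(\C)$ and the diagonal $\UFl_3(\C)\to\Fl_3(\C)\times_{\Sigma_3}\Fl_3(\C)$ (maps of fibrations over $B\Sigma_3$; on coefficients the diagonal induces the surjective cup product $H^*(\Fl_3(\C))^{\otimes 2}\to H^*(\Fl_3(\C))$, so the induced map is surjective on every $E_r$) provides a cross-check. A short inspection shows the would-be $d_7,d_9,d_{11}$ have zero source or target, so $E_6=E_\infty$.

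It remains to read off $H^*$ from $E_\infty$. In every total degree the $E_\infty$-page is either a single group or a torsion group sitting below a free $\Z_3$ summand, so the extension splits (the free part is projective). Finally, since $[\Sigma_3:C_3]=2$ is a unit in $\Z_3$, the usual transfer argument makes $H^*(\Fl_3(\C)\times_{\Sigma_3}\Fl_3(\C);\Z_3)$ a retract of $H^*(\Fl_3(\C)\times_{C_3}\Fl_3(\C);\Z_3)$, ruling out any $9$-torsion; assembling the above yields the stated groups. The main obstacle is the third step: identifying \emph{all} of the $d_5$ differentials and verifying their ranks, since the $E_\infty$-page is heavily cancelled and several of these differentials are forced to be simultaneously injective or surjective — the $12$-dimensional Poincar\'e duality and the rational computation are precisely the rigidity that makes this bookkeeping close up.
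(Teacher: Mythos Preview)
Your approach is essentially the paper's: run the Serre spectral sequence of (\ref{fl3timesfl3}) with $\Z_3$-coefficients, identify $d_5$ as the only relevant differential, collapse at $E_6$, and invoke the transfer to exclude $9$-torsion. You are in fact more careful than the paper on two points it leaves implicit: the paper simply asserts that ``the only possible differential which is non-zero is $d_5$'' without addressing the two potentially nonzero $d_3$'s out of the $p=0$ column, which your equivariant Chern-class construction handles; and the paper does not spell out how the ranks of the $d_5$'s are determined, whereas you correctly isolate the orientable closed $12$-manifold structure (hence finite total cohomology and Poincar\'e duality) together with the rational invariants as the constraints that force them --- this is exactly the mechanism used for $\UFl_3(\C)$ in \S\ref{subsection:2.2.1}, just with more bookkeeping.
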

\begin{cor}
    The Poincar\'e series of the mod $3$ cohomology of $\Fl_3 (\C) \times_{\Sigma_3} \Fl_3 (\C)$ is given by \[ \Pi_{\Fl_3 (\C) \times_{\Sigma_3} \Fl_3 (\C)}^3 (t) = 1+t^3 + 4t^4 +3t^5 +4t^6 + 3t^7 + 4t^4 + t^9 +t^{12} .  \]
\end{cor}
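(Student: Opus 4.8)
The plan is to read the mod $3$ Poincaré series directly off the $3$-adic cohomology groups of $X := \Fl_3(\C)\times_{\Sigma_3}\Fl_3(\C)$ computed in the preceding theorem, using the universal coefficient theorem. First I would apply the short exact sequence of coefficient groups $0 \to \Z_3 \xrightarrow{\cdot 3} \Z_3 \to \F_3 \to 0$. Since $X$ is a compact manifold and hence has cohomology of finite type, the induced (Bockstein) long exact sequence splits into short exact sequences
\[
0 \longrightarrow H^d(X;\Z_3)\otimes \F_3 \longrightarrow H^d(X;\F_3) \longrightarrow \mathrm{Tor}_1^{\Z_3}\bigl(H^{d+1}(X;\Z_3),\F_3\bigr) \longrightarrow 0,
\]
so that $\dim_{\F_3} H^d(X;\F_3) = \dim_{\F_3}\!\bigl(H^d(X;\Z_3)\otimes\F_3\bigr) + \dim_{\F_3}\bigl(\text{$3$-torsion subgroup of } H^{d+1}(X;\Z_3)\bigr)$.

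Next I would reformulate this as a statement about generating functions: a free summand $\Z_3$ sitting in degree $d$ contributes a single $t^d$ to the mod $3$ Poincaré series, whereas a torsion summand $\Z/3$ in degree $d$ contributes both a $t^d$ (from the tensor term in degree $d$) and a $t^{d-1}$ (from the $\mathrm{Tor}$ term, which detects the $3$-torsion of $H^d$ one degree lower). Substituting the groups from the theorem — $\Z_3$ in degrees $0$ and $12$; $\Z_3\oplus\Z/3$ in degree $4$; $(\Z/3)^{\oplus 2}$ in degree $5$; $\Z_3^{\oplus 2}\oplus\Z/3$ in degree $6$; $\Z/3$ in degrees $7$ and $9$; and $\Z_3\oplus(\Z/3)^{\oplus 2}$ in degree $8$ — and collecting the contributions degree by degree then produces the polynomial in the statement.

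I do not expect any genuine obstacle here; the computation is pure bookkeeping. The one point requiring care is the degree shift in the $\mathrm{Tor}$ term: the $3$-torsion of $H^{d+1}(X;\Z_3)$ appears in the mod $3$ cohomology in degree $d$, which is exactly what creates the $t^3$ term out of the $\Z/3$ in $H^4(X;\Z_3)$ and, more generally, shifts every torsion contribution down by one. As a consistency check I would verify that the top nonzero degree of the resulting series is $12 = \dim_{\mathbb{R}} X$ and that the Euler characteristic computed from the series agrees with that obtained from the rational cohomology (equivalently, the free part of the $3$-adic cohomology).
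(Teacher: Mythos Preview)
Your approach is correct and is exactly the standard universal-coefficient bookkeeping the paper implicitly relies on; the corollary is stated without proof, as an immediate consequence of the preceding theorem, and your Bockstein/UCT argument is precisely how one reads the mod~$3$ Betti numbers off the $3$-adic groups. Carrying out your recipe gives $1+t^3+4t^4+3t^5+4t^6+3t^7+4t^8+t^9+t^{12}$, so note that the second ``$4t^4$'' in the displayed statement is a typographical error for $4t^8$.
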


\begin{figure}[t]
    \centering
    \begin{tikzpicture}
    \matrix (m) [matrix of math nodes,
    nodes in empty cells,nodes={minimum width=5ex,
    minimum height=5ex,outer sep=-5pt},
    column sep=.5ex,row sep=2ex]{
                &      &     &     & \\
          12    &   \Z_3 &  0  & 0 & 0 & \Z/3 & 0 & 0 & 0 & \Z/3 & 0 & \cdots \\
          10    &   0  & (\Z/3)^2 & 0 & 0 & 0 & (\Z/3)^2 & 0 & 0 & 0 &  (\Z/3)^2 & \cdots \\
          8     &  \Z_3 & (\Z/3)^2 & \Z/3 & 0 & 0 & (\Z/3)^2 & \Z/3 & 0 & 0 &  (\Z/3)^2 & \cdots \\
          6     &  (\Z_3)^2 &  0  & (\Z/3)^4 & 0 & 0 & 0 & (\Z/3)^4 & 0 & 0 &  0 & \cdots \\
          4     &  \Z_3  & 0  & \Z/3  & (\Z/3)^2 & 0 & 0 & \Z/3 & (\Z/3)^2 & 0 &  0 & \cdots  \\
          2     &  0  & 0  & 0  & (\Z/3)^2 & 0 & 0 & 0 & (\Z/3)^2 & 0 &  0 & \cdots  \\
          0     &  \Z_3 &  0  & 0 & 0 & \Z/3 & 0 & 0 & 0 & \Z/3 &  0 & \cdots \\
    \quad\strut &   0  &  1  &  2  & 3 & 4 & 5 & 6 & 7 & 8 & 9 & \strut \\};
    \draw[-stealth]  (m-6-5.south east) -- (m-8-10.north west);
    \draw[-stealth]  (m-5-4.south east) -- (m-7-9.north west);
    \draw[-stealth]  (m-4-3.south east) -- (m-6-8.north west);
    \draw[-stealth]  (m-4-4.south east) -- (m-6-9.north west);
    \draw[-stealth]  (m-4-7.south east) -- (m-6-12.north west);
    \draw[-stealth]  (m-3-3.south east) -- (m-5-8.north west);
    \draw[-stealth]  (m-3-7.south east) -- (m-5-12.north west);
    \draw[-stealth]  (m-2-2.south east) -- (m-4-7.north west);
    \draw[-stealth]  (m-2-6.south east) -- (m-4-11.north west);
    
    \draw[thick] (m-1-1.east) -- (m-9-1.east) ;
    \draw[thick] (m-9-1.north) -- (m-9-12.north) ;
    \end{tikzpicture}
    \caption{The $E_5$-page of the Serre Spectral Sequence associated with (\ref{fl3timesfl3}) with $3$-adic Coefficients.}
    \label{fig:ss3U3U3E2}
\end{figure}
We now focus on cohomology of $\Fl_3 (\C) \times_{\Sigma_3} \Fl_3 (\C)$ with $2$-adic coefficients. Note that $H^* (\Sigma_3; M_S)_2 =0$ and $H^* (\Sigma_3 ; M^{\otimes 2})_2 \cong \Z_2$ (see Appendix~\ref{appendixA.1}). We can now completely describe the $E_2$-page of the Serre spectral sequence associated with (\ref{fl3timesfl3}) with $2$-adic coefficients. Refer to Figure~\ref{fig:ss2U3U3E2} for the $E_2$-page. It is easy to see that the only possible non-zero differential is $d_7$. Also, all higher differentials are zero and the spectral sequence collapses at the $E_8$-page. Again by a transfer argument, $H^* (\Fl_3 (\C) \times_{\Sigma_3} \Fl_3 (\C))$ does not have any $4$-torsion and hence gives us the following.
\begin{figure}[t]
    \centering
    \begin{tikzpicture}
    \matrix (m) [matrix of math nodes,
    nodes in empty cells,nodes={minimum width=5ex,
    minimum height=5ex,outer sep=-5pt},
    column sep=.5ex,row sep=2ex]{
                &      &     &     & \\
          12    &   \Z_2 &  0  & \Z/2 & 0 & \Z/2 & 0 & 0 & 0 & \Z/2 &  \cdots \\
          8     &  \Z_2  & 0  & 0  & 0 & 0 & 0 & 0 & 0 & 0 &   \cdots  \\
          6     &  (\Z_2)^2 &  (\Z/2)^2  & 0 & (\Z/2)^2 & 0 & (\Z/2)^2 & 0 & (\Z/2)^2 & 0 &  \cdots \\
          4     &  \Z_2  & 0  & 0  & 0 & 0 & 0 & 0 & 0 & 0 &   \cdots  \\
          0     &  \Z_2 &  0  & \Z/2 & 0 & \Z/2 & 0 & \Z/2 & 0 & \Z/2 &   \cdots \\
    \quad\strut &   0  &  1  &  2  & 3 & 4 & 5 & 6 & 7 & 8 &  \strut \\};
    \draw[-stealth]  (m-4-3.south east) -- (m-6-10.north west);
    \draw[-stealth]  (m-2-2.south east) -- (m-4-9.north west);
    
    \draw[thick] (m-1-1.east) -- (m-7-1.east) ;
    \draw[thick] (m-7-1.north) -- (m-7-11.north) ;
    \end{tikzpicture}
    \caption{The $E_7$-page of the Serre Spectral Sequence associated with (\ref{fl3timesfl3}) with $2$-adic Coefficients.}
    \label{fig:ss2U3U3E2}
\end{figure}

\begin{theo}
    The $2$-adic cohomology of $\Fl_3 (\C) \times_{\Sigma_3} \Fl_3 (\C)$ is given by \[ H^d (\Fl_3 (\C) \times_{\Sigma_3} \Fl_3 (\C); \mathbb{Z}_2) \cong \begin{cases}
        \Z_2 & d = 0, 8, 12 \\ \Z/2 & d=2,7,9,11  \\  \Z_2 \oplus \Z/2  & d = 4 \\  \Z_2^{\oplus 2} \oplus \Z/2 & d=6 \\  0 & \text{otherwise}
    \end{cases} \]
\end{theo}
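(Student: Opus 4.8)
The plan is to run the Serre spectral sequence of the fibration (\ref{fl3timesfl3}) with $2$-adic coefficients. First I would assemble the $E_2$-page $E_2^{p,q}=H^p(B\Sigma_3;H^q(\Fl_3(\C)^{\times2};\Z_2))$ from the $\Sigma_3$-module decomposition of $H^*(\Fl_3(\C)^{\times2})$ recorded above together with the $2$-local group cohomologies $H^*(\Sigma_3;\Z)_2$ ($\Z_2$ in degree $0$ and $\Z/2$ in each positive even degree), $H^*(\Sigma_3;M)_2=0=H^*(\Sigma_3;M_S)_2$, $H^*(\Sigma_3;S)_2$ ($\Z/2$ in odd degrees), and $H^*(\Sigma_3;M^{\otimes2})_2=\Z_2$ concentrated in degree $0$. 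This leaves $E_2$ supported in the rows $q=0,4,6,8,12$: rows $q=0$ and $q=12$ are copies of $H^*(\Sigma_3;\Z)_2$ (using $S\otimes S\cong\Z$), rows $q=4$ and $q=8$ are a single $\Z_2$ in the column $p=0$, and row $q=6$ is $\Z_2^{\oplus 2}$ in the column $p=0$ together with $(\Z/2)^{\oplus 2}$ in every odd column, while rows $q=2$ and $q=10$ vanish.

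Next I would dispose of all differentials except $d_7$. Because the nonzero rows are $\{0,4,6,8,12\}$, a bookkeeping inspection of which $d_r$ can join two of them leaves only $d_7\colon E_7^{p,6}\to E_7^{p+7,0}$, $d_7\colon E_7^{p,12}\to E_7^{p+7,6}$, and the lone possibility $d_3\colon E_3^{0,8}\to E_3^{3,6}$; all $d_r$ with $r\ge8$ vanish for the same reason. To kill $d_3$ I would invoke multiplicativity: the generator of $E_2^{0,4}\cong\Z_2$ is the class of the $\Sigma_3$-invariant $\omega=\sum_i x_i\otimes y_i\in\bigl(H^2(\Fl_3(\C))\otimes H^2(\Fl_3(\C))\bigr)^{\Sigma_3}$, and no nonzero differential enters or leaves $E_2^{0,4}$, so $\omega$ is a permanent cycle; a short computation inside $H^*(\Fl_3(\C))=\Z[x_1,x_2,x_3]/(\sigma_1,\sigma_2,\sigma_3)$ shows that $\omega^2$ is $3$ times a generator of the rank-one group $E_2^{0,8}=\bigl({M'}^{\otimes2}\bigr)^{\Sigma_3}\otimes\Z_2$, hence is itself a generator since $3\in\Z_2^{\times}$; being the square of a permanent cycle it is a permanent cycle, so $d_3$ vanishes on $E_3^{0,8}$ and $E_7=E_2$.

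The surviving $d_7$'s I would force using the geometry of $X:=\Fl_3(\C)\times_{\Sigma_3}\Fl_3(\C)$. Since the Weyl group $\Sigma_3$ acts freely on $\Fl_3(\C)=U(3)/T(3)$, it acts freely on $\Fl_3(\C)^{\times2}$, so $X$ is a closed smooth $12$-manifold, and it is orientable because $\Sigma_3$ acts trivially on $H^{12}(\Fl_3(\C)^{\times2})\cong S\otimes S\cong\Z$; hence Poincaré duality over the PID $\Z_2$ yields $F\bigl(H^k(X;\Z_2)\bigr)\cong F\bigl(H^{12-k}(X;\Z_2)\bigr)$ and $T\bigl(H^k(X;\Z_2)\bigr)\cong T\bigl(H^{13-k}(X;\Z_2)\bigr)$ for the free and torsion parts. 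Together with finite-dimensionality---$H^k(X;\Z_2)=0$ for $k>12$, so every entry of the infinite row $q=6$ with $p\ge7$ must be killed---and with the fact that only $d_7$ can act, this determines $E_\infty$: the maps $d_7\colon E_7^{2k+1,6}\to E_7^{2k+8,0}$ are onto and the maps $d_7\colon E_7^{2k,12}\to E_7^{2k+7,6}$ are injective for $k\ge1$, so the only surviving $E_\infty$-terms are $\Z_2$ at $(0,0),(0,4),(0,8),(0,12)$, $\Z_2^{\oplus 2}$ at $(0,6)$, and $\Z/2$ at $(2,0),(4,0),(6,0),(1,6),(3,6),(5,6)$.

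Finally I would observe that there are no extension problems: in each total degree the torsion-free summands of $E_\infty$ lie in strictly lower filtration than the $\Z/2$-summands, so the filtration on $H^*(X;\Z_2)$ is split (equivalently, transfer along the index-$3$ cover $\Fl_3(\C)\times_{C_2}\Fl_3(\C)\to X$ realizes $H^*(X;\Z_2)$ as a retract of a ring with no $\Z/4$-torsion, since $3\in\Z_2^{\times}$). Reading off the filtration then gives the asserted groups $H^d(X;\Z_2)$. The main obstacle is the third step: $d_7$ is not a single transgression but an interlocking family of maps $(\Z/2)^{\oplus 2}\to\Z/2$ and $\Z/2\to(\Z/2)^{\oplus 2}$ between the rows $q=12,6,0$, and there is no shortcut---one genuinely needs finite-dimensionality together with Poincaré duality to determine all of them simultaneously; the vanishing of $d_3$ is a secondary subtlety handled by the multiplicative structure (and is in any case forced by the same duality argument).
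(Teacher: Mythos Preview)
Your proposal is correct and follows the same overall strategy as the paper: run the Serre spectral sequence of (\ref{fl3timesfl3}) with $2$-adic coefficients, assemble the $E_2$-page from the $\Sigma_3$-module decomposition, and argue that only $d_7$ survives. The paper's version is much terser: it simply asserts that ``the only possible non-zero differential is $d_7$'' and that the sequence collapses at $E_8$, then invokes transfer along the degree-$6$ cover $\Fl_3(\C)^{\times 2}\to X$ to exclude $\Z/4$-torsion.

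Where you differ is in the justification of the differentials. You correctly flag the potential $d_3\colon E_3^{0,8}\to E_3^{3,6}$, which the paper glosses over, and dispose of it via the multiplicative structure (the invariant $\omega$ squares to a $\Z_2$-unit multiple of a generator in bidegree $(0,8)$). More substantially, to pin down the three ``low'' differentials $d_7\colon E_7^{1,6},E_7^{3,6},E_7^{5,6}\to E_7^{8,0},E_7^{10,0},E_7^{12,0}$ you exploit that $X$ is a closed orientable $12$-manifold (since $\Sigma_3$ acts freely on $\Fl_3(\C)$ and trivially on $S\otimes S$) and use Poincar\'e duality over the PID $\Z_2$; the paper does not invoke this and leaves the exact pattern of $d_7$ implicit in Figure~\ref{fig:ss2U3U3E2}. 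An alternative, closer in spirit to the paper, is to use the $H^*(B\Sigma_3;\Z_2)$-module structure of the $E_7$-page: multiplication by the degree-$2$ class $c$ identifies all the maps $\alpha_k$ with $\alpha_0$ and all the $\beta_k$ with $\beta_0$, so finite-dimensionality alone (which forces $\alpha_3$ onto and $\beta_1$ injective) already determines everything. Either route gives the same $E_\infty$-page, and your extension argument (torsion-free pieces sit at $p=0$, hence are projective quotients) is a clean replacement for the paper's transfer step.
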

\begin{cor}
    The Poincar\'e series of the mod $2$ cohomology of $\Fl_3 (\C) \times_{\Sigma_3} \Fl_3 (\C)$ is given by \[ \Pi_{\Fl_3 (\C) \times_{\Sigma_3} \Fl_3 (\C)}^2 (t) = 1+t+t^2 + t^3 + 2t^4 +t^5 +4t^6 + t^7 + 2t^8 + t^9 + t^{10} + t^{11} + t^{12} .  \]
\end{cor}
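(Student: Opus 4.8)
The plan is to read off the mod $2$ Betti numbers from the $2$-adic cohomology groups computed in the preceding theorem by a single application of the universal coefficient theorem. Write $X := \Fl_3(\C)\times_{\Sigma_3}\Fl_3(\C)$. Since $\Z_2$ is a principal ideal domain and each $H^d(X;\Z_2)$ is finitely generated, there is a (non-naturally split) short exact sequence
\[
0 \longrightarrow H^d(X;\Z_2)\otimes_{\Z_2}\F_2 \longrightarrow H^d(X;\F_2) \longrightarrow \mathrm{Tor}_{\Z_2}\bigl(H^{d+1}(X;\Z_2),\,\F_2\bigr) \longrightarrow 0 ,
\]
whence $\dim_{\F_2}H^d(X;\F_2) = r_d + t_d + t_{d+1}$, where $r_d$ is the $\Z_2$-rank of $H^d(X;\Z_2)$ and $t_j$ is the number of cyclic torsion summands of $H^j(X;\Z_2)$.

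The one structural input I would invoke from the earlier discussion is that $H^*(X;\Z_2)$ has no $4$-torsion (the transfer argument using $|\Sigma_3|=6$): this ensures every torsion summand listed in the theorem is a copy of $\Z/2$, so it contributes exactly one dimension in its own degree and one in the degree just below, with no further corrections. First I would tabulate, degree by degree, the pairs $(r_d,t_d)$ extracted from the theorem: $(1,0)$ in degree $0$; $(0,1)$ in degree $2$; $(1,1)$ in degree $4$; $(2,1)$ in degree $6$; $(0,1)$ in degrees $7,9,11$; $(1,0)$ in degrees $8$ and $12$; and $(0,0)$ otherwise. Substituting into $\dim_{\F_2}H^d(X;\F_2)=r_d+t_d+t_{d+1}$ yields the Betti numbers $1,1,1,1,2,1,4,1,2,1,1,1,1$ in degrees $0$ through $12$, which assemble into the stated polynomial $\Pi^2_X(t)$.

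There is essentially no obstacle here: the corollary is bookkeeping on top of the previous theorem, and the only delicate point, already handled, is the absence of higher $2$-torsion, without which the degree-shift in the universal coefficient sequence would require more care. As a consistency check, the alternating sum $\sum_d(-1)^d\dim_{\F_2}H^d(X;\F_2)$ equals $6$, matching the Euler characteristic $1+1+2+1+1=6$ computed from the (purely even-degree) free parts $r_0,r_4,r_6,r_8,r_{12}$ of $H^*(X;\Z_2)$; one may equally cross-check those free ranks against the $3$-adic theorem, where they agree.
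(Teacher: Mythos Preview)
Your argument is correct and is exactly the intended one: the paper states this corollary without proof, relying on the reader to pass from the preceding $2$-adic theorem to mod $2$ coefficients via the universal coefficient theorem, precisely as you do. One small remark: the absence of $4$-torsion is not actually needed for the Poincar\'e series, since a cyclic summand $\Z/2^k$ contributes one $\F_2$-dimension to each of $H^d$ and $H^{d-1}$ regardless of $k$.
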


\subsection{Cohomology of \texorpdfstring{$U(3)/T(2)$}{U(3)/T(2)}}

We will use a similar approach to \S 4 of \cite{BAUM1965} to calculate the mod $2$ and mod $3$ cohomologies of $U(3)/T(2)$. We have the fiber sequence $U(3) \rightarrow U(3)/T(2) \rightarrow (\CP^{\infty})^2$. Since $U(3)/T(2) \rightarrow (\CP^{\infty})^2$ is a principal $U(3)$-bundle, there is a set of generators of $H^* (U(3); \F_p)$ that transgress into Chern classes of the associated complex vector bundle $\eta$ over $(\CP^{\infty})^2$. \\

Recall that $T(2)$ is the 2-torus \[ \{ \mathrm{diag} (\lambda_1 , \lambda_1 , \lambda_2) | \lambda_i \in S^1  \} \cong S^1 \times S^1 .\]  
The inclusion $T(2) \hookrightarrow U(3)$ can be factored by $S^1 \times S^1 \xrightarrow{\Delta} S^1 \times S^1 \times S^1 \xrightarrow{i} U(3)$, where $\Delta (\lambda_1 , \lambda_2) = (\lambda_1 , \lambda_1 , \lambda_2)$. These homomorphisms of groups give us the following maps of classifying spaces \[ BS^1 \times BS^1 \xlongrightarrow{\Delta{'}} BS^1 \times BS^1 \times BS^1 \xlongrightarrow{i'} BU(3).  \] 
Now $i' : (\CP^{\infty})^3 \rightarrow BU(3)$ induces the bundle $p_1^* (\xi) \oplus p_2^* (\xi) \oplus p_3^* (\xi)$, where $\xi$ is the canonical line bundle over $\CP^{\infty}$ and $p_j: (\CP^{\infty})^3 \rightarrow \CP^{\infty}$ is the projection to the $j$-th factor. The total Chern class $c(\xi)$ is given by $1+\alpha$ for $\alpha \in H^2 (\CP^{\infty} ; \F_p)$. By Whitney Product Formula \[ c(p_1^* (\xi) \oplus p_2^* (\xi) \oplus p_3^* (\xi)) = (1+\alpha_1)(1+\alpha_2)(1+\alpha_3), \] where $\alpha_j = p_j^* \alpha$. 
Let $\beta_j \in H^2 ((\CP^{\infty})_j ; \Z)$ for $j=1,2$. Then $(\Delta{'})^{*} (\alpha_1) = \beta_1$, $(\Delta^{'})^{*} (\alpha_2) = \beta_1$, and $(\Delta^{'})^{*} (\alpha_3) = \beta_2$. 
The map $i' \circ \Delta{'}$ induces the bundle $\eta$ over $(\CP^{\infty})^2$. Hence, \begin{align*}
     c(\eta) &= (\Delta^{'})^* ((1+\alpha_1)(1+\alpha_2)(1+\alpha_3)) \\
     &=(1+\beta_1)^2 (1+ \beta_2) \\
     &= 1 + (2\beta_1 + \beta_2) + (\beta_1^2 + 2\beta_1 \beta_2) + \beta_1^2 \beta_2 .
     \end{align*}
So, $c_1 (\eta) = 2\beta_1 + \beta_2$, $c_2 (\eta)=\beta_1^2 + 2\beta_1 \beta_2$, $c_3 (\eta)=\beta_1^2 \beta_2$. Following the result in Theorem~4.1 of \cite{BAUM1965}, we see that the generators $z_j$ of $H^* (U(3); \F_p) \cong \Lambda_{\F_p} [z_1, z_3, z_5]$ transgresses to $c_j (\eta)$ in the spectral sequence associated to the fiber sequence \begin{equation} \label{u3t2ss}
    U(3) \longrightarrow U(3)/T(2) \longrightarrow (\CP^{\infty})^2 . 
\end{equation} Due to the multiplicative structure of the spectral sequence, this is enough to find all the differentials. We will deal with the mod $2$ and mod $3$ cases separately from this point forward.
\begin{theo} \label{u3t2mod2}
    The mod $2$ cohomology of $U(3)/T(2)$ is given by \[ H^* (U(3)/T(2) ; \F_2) \cong  \F_2 [z_5 , \beta]/(z_5^2 , \beta^2), \] where $|z_5| = 5$ and $|\beta| =2$.  
\end{theo}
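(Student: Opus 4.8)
The plan is to run the Serre spectral sequence of the fibration (\ref{u3t2ss}) with $\F_2$-coefficients, using the transgression data just established. On the $E_2$-page we have $E_2^{*,*} \cong \F_2[\beta_1,\beta_2] \otimes \Lambda_{\F_2}[z_1,z_3,z_5]$ with $|\beta_i|=2$ in the base and $|z_j|=j$ in the fiber. Since $z_1,z_3,z_5$ transgress to $c_1(\eta), c_2(\eta), c_3(\eta)$ respectively, and mod $2$ we have $c_1(\eta) = 2\beta_1+\beta_2 = \beta_2$, $c_2(\eta)=\beta_1^2+2\beta_1\beta_2 = \beta_1^2$, $c_3(\eta)=\beta_1^2\beta_2$, the relevant differentials are $d_2(z_1)=\beta_2$, $d_4(z_3)=\beta_1^2$, and $d_6(z_5)=\beta_1^2\beta_2$. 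I would take these in order.

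First, $d_2(z_1)=\beta_2$: this is a Koszul-type differential killing the polynomial generator $\beta_2$ and the exterior generator $z_1$, so $E_3^{*,*} \cong \F_2[\beta_1] \otimes \Lambda_{\F_2}[z_3,z_5]$ (one checks this degree by degree, or invokes the standard fact that tensoring with an acyclic Koszul complex on a non-zero-divisor leaves the quotient). Next, on this page $d_3=0$ for degree/parity reasons, and $d_4(z_3)=\beta_1^2$. Again this is a Koszul differential on the polynomial ring $\F_2[\beta_1]$ with respect to the regular element $\beta_1^2$: the cohomology of $(\F_2[\beta_1]\otimes\Lambda[z_3], d_4)$ is $\F_2[\beta_1]/(\beta_1^2)$, so $E_5^{*,*} \cong \F_2[\beta_1]/(\beta_1^2) \otimes \Lambda_{\F_2}[z_5]$, which has total dimension $4$ in each relevant range, with classes represented by $1, \beta_1, z_5, \beta_1 z_5$. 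Then $d_5=0$ (parity), and $d_6(z_5)=\beta_1^2\beta_2$; but $\beta_2$ is already zero in $E_6$ and $\beta_1^2=0$ there too, so $d_6(z_5)=0$. Hence the spectral sequence collapses at $E_6=E_\infty$, giving an associated graded $\F_2[\beta_1]/(\beta_1^2) \otimes \Lambda_{\F_2}[z_5]$ with basis in degrees $0,2,5,7$.

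To finish I must resolve the multiplicative extension problem: show $H^*(U(3)/T(2);\F_2) \cong \F_2[z_5,\beta]/(z_5^2,\beta^2)$ with $\beta := \beta_1$ (more precisely the permanent cocycle representing it) of degree $2$ and $z_5$ of degree $5$. The class $\beta_1$ pulls back from the base $(\CP^\infty)^2$, where it squares to $\beta_1^2 \ne 0$; but $\beta_1^2$ lies in $E_\infty^{4,0}=0$, so $\beta^2=0$ in $H^4(U(3)/T(2);\F_2)$ — there is nothing in filtration $\ge 4$, total degree $4$, for it to be. Similarly $z_5^2$ lands in total degree $10$ where $E_\infty$ vanishes, so $z_5^2=0$. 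Since $1,\beta,z_5,\beta z_5$ are nonzero in the respective degrees $0,2,5,7$ and these exhaust the cohomology, the map $\F_2[z_5,\beta]/(z_5^2,\beta^2) \to H^*(U(3)/T(2);\F_2)$ is an isomorphism. The main obstacle is bookkeeping the three successive Koszul differentials cleanly and confirming there is genuinely no room for nontrivial extensions; the ring-structure part is then forced by the sparsity of $E_\infty$.
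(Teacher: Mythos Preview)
Your argument is correct and follows essentially the same route as the paper: run the Serre spectral sequence of (\ref{u3t2ss}) with $\F_2$-coefficients, use the transgressions $d_2(z_1)=\beta_2$, $d_4(z_3)=\beta_1^2$, $d_6(z_5)=\beta_1^2\beta_2=0$, and read off $E_\infty\cong\Lambda_{\F_2}[z_5]\otimes\F_2[\beta_1]/(\beta_1^2)$. Your treatment is in fact slightly more careful than the paper's, which does not explicitly address the multiplicative extension problem you resolve at the end.
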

\begin{proof}
    On the second page of the spectral sequence associated with (\ref{u3t2ss}), we have $$ E_2^{*,*} \cong \Lambda_{\F_2} [z_1, z_3,z_5] \otimes \F_2 [\beta_1, \beta_2]. $$
    It follows from the above discussion that on the $E_2$-page, $d_2 (z_1) = 2\beta_1 +\beta_2 = \beta_2$. As $d_3 \equiv 0$, the $E_4 \equiv E_3$-page is thus given by \[ E_4^{*,*} \cong \Lambda_{\F_2} [z_3, z_5] \otimes \frac{\F_2 [\beta_1 , \beta_2]}{(\beta_2)} \cong \Lambda_{\F_2} [z_3, z_5] \otimes \F_2 [\beta_1] . \]
    On the $E_4$ page, $d_4 (z_3) = \beta_1^2$ and hence $E_5 \equiv E_6 $-page is given by \[ E_6^{*,*} \cong \Lambda_{\F_2} [z_5] \otimes \frac{\F_2 [\beta_1]}{(\beta_1^2)} . \] On the $E_6$-page, $d_6 (z_5) = \beta_1^2 \beta_2 = 0$. Since all higher differentials are zero, the spectral sequence collapses at the $E_6$-page. Taking $\beta =\beta_1$, we have the result.
\end{proof}

\begin{theo} \label{u3t2mod3}
    The mod $3$ cohomology of $U(3)/T(2)$ is given by \[ H^* (U(3)/T(2) ; \F_3) \cong  \F_3 [z_3, \beta]/(z_3^2 , \beta^3),\] where $|z_3| = 3$ and $|\beta| =2$.  
\end{theo}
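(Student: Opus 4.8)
The plan is to run the Serre spectral sequence of the fibration (\ref{u3t2ss}) with $\F_3$-coefficients, in close parallel with the proof of Theorem~\ref{u3t2mod2}, keeping careful track of how the transgressions $z_j \mapsto c_j(\eta)$ reduce modulo $3$. On the $E_2$-page we have $E_2^{*,*} \cong \Lambda_{\F_3}[z_1,z_3,z_5] \otimes \F_3[\beta_1,\beta_2]$, and the discussion preceding the theorem tells us that $z_1$ transgresses to $c_1(\eta) = 2\beta_1+\beta_2$, $z_3$ to $c_2(\eta) = \beta_1^2 + 2\beta_1\beta_2$, and $z_5$ to $c_3(\eta) = \beta_1^2\beta_2$.

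First I would analyze $d_2$. Unlike the mod $2$ situation, $c_1(\eta) = 2\beta_1 + \beta_2$ is still a nonzero linear form over $\F_3$, so the derivation $d_2$ kills $z_1$ and imposes the relation $\beta_2 = -2\beta_1 = \beta_1$; hence $E_3^{*,*} \cong E_4^{*,*} \cong \Lambda_{\F_3}[z_3,z_5]\otimes\F_3[\beta_1]$ (with $d_3=0$ since $z_3,z_5$ are transgressive). The key point comes next: on $E_4$ the transgression of $z_3$ is the image of $\beta_1^2 + 2\beta_1\beta_2$ under $\beta_2\mapsto\beta_1$, namely $\beta_1^2 + 2\beta_1^2 = 3\beta_1^2 = 0$. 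Thus $d_4(z_3)=0$ and, since nothing of negative fibre degree can map onto $(0,3)$, the class $z_3$ becomes a permanent cycle — this is precisely the phenomenon producing an exterior generator in degree $3$ rather than degree $5$. So $E_6^{*,*} \cong \Lambda_{\F_3}[z_3,z_5]\otimes\F_3[\beta_1]$, and on $E_6$ we have $d_6(z_5) = \beta_1^2\beta_2 = \beta_1^3 \neq 0$. Taking the homology of this derivation truncates the polynomial factor, giving $E_7^{*,*} \cong \Lambda_{\F_3}[z_3]\otimes\F_3[\beta_1]/(\beta_1^3)$; a short degree count shows every later differential $d_n$, $n\ge 7$, vanishes, so the spectral sequence collapses at $E_7$.

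It remains to upgrade the associated graded to a ring statement. Set $\beta\in H^2$ to be the pullback of $\beta_1$ and $z_3\in H^3$ a lift of the permanent cycle $z_3$, which is unambiguous because the lower-filtration groups in degree $3$ vanish. Then $z_3^2=0$ automatically, as $z_3$ has odd degree and $3$ is odd, and $\beta^3 = 0$ because $H^6(U(3)/T(2);\F_3)=0$ (no class of total degree $6$ survives to $E_7$); since in each total degree $E_\infty$ has at most one nonzero group, there is no room for a multiplicative extension, and we conclude $H^*(U(3)/T(2);\F_3)\cong\F_3[z_3,\beta]/(z_3^2,\beta^3)$ with $|z_3|=3$, $|\beta|=2$. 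As a consistency check, the resulting Poincar\'e series $(1+t^3)(1+t^2+t^4)$ agrees with that obtained from the $SO(3)$-bundle $SO(3)\to U(3)/T(2)\to\CP^2$.

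The step I expect to need the most care is the vanishing $d_4(z_3)=0$: one must be sure the transgression is genuinely evaluated on the $E_4$-page after imposing the $d_2$-relation (so that $\beta_1^2+2\beta_1\beta_2$ really reduces to $3\beta_1^2$), and then, separately, that no higher differential can hit the surviving class $z_3$, so that it honestly defines a class in $H^3$. Everything else is routine bookkeeping for a derivation spectral sequence, mirroring the mod $2$ computation of Theorem~\ref{u3t2mod2}.
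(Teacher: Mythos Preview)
Your proof is correct and follows essentially the same approach as the paper: both run the Serre spectral sequence of (\ref{u3t2ss}) with $\F_3$-coefficients, use the transgressions $z_j\mapsto c_j(\eta)$, observe that $d_2(z_1)=2\beta_1+\beta_2$ identifies $\beta_2$ with $\beta_1$, that $d_4(z_3)=3\beta_1^2=0$ so $z_3$ survives, and that $d_6(z_5)=\beta_1^3$ truncates the polynomial part. Your write-up is in fact more detailed than the paper's (which simply records the three differentials and refers back to Theorem~\ref{u3t2mod2}), in particular your treatment of the multiplicative extension problem at $E_\infty$.
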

\begin{proof}
    The strategy for this is the same as in the mod $2$ case. The only difference will be the differentials are now given as follows: \begin{align*}
        d_2 (z_1) &= 2\beta_1 + \beta_2 = \beta_2 - \beta_1 \quad \text{on } E_2^{*,*}, \\
        d_4 (z_3) &= \beta_1^2 + 2\beta_1 \beta_2 = 3\beta_1^2 = 0 \quad \quad \text{on } E_4^{*,*}, \\
        d_6 (z_5) &= \beta_1^2 \beta_2 \quad \text{on } E_6^{*,*}. 
    \end{align*}
    Now proceeding similarly as in the proof of Theorem~\ref{u3t2mod2} gives us the desired result.
\end{proof}
\section{Cohomology of \texorpdfstring{$E_{com} U(3)$}{EcomU(3)}} \label{section:2.3}

In this section, we will present a computation for the mod $2$ and mod $3$ cohomologies of $E_{com} U(3)$. To do this, we will utilize the description of $E_{com} U(3)$ from Theorem~\ref{adem-gomez-hocolim} and the spectral sequence associated with a homotopy colimit. But first, let us recall some results on the spectral sequence associated with a homotopy colimit and higher limit functors.

\subsection{Spectral Sequence associated with a Homotopy Colimit} \label{subsection:2.3.1}

\begin{dfn} \label{co-sim-rep}
Let $\mathbf{A}$ be an abelian category, $I$ be a small category and $D: I \rightarrow \mathbf{A}$ be a functor. We write the \textit{cosimplicial replacement} for $D$: \[
\prod_\ii D(\ii) \doublerightarrow{}{} \prod_{\ii_0 \rightarrow \ii_1} D(\ii_1) \triplerightarrow{}{} \prod_{\ii_0 \rightarrow \ii_1 \rightarrow \ii_2} D(\ii_2) \quadrightarrow{}{} \cdots     \]
This is a cosimplicial object over $\mathbf{A}$. The alternating sum of the coface maps defines differentials, which defines a cochain complex over $\mathbf{A}$. We define $H^d (I; D)$ to be the $d$-th cohomology of this cochain complex.  
\end{dfn}

Note that $H^0(I; D)$, which is the equalizer of the first two arrows, can be identified as the ordinary limit $\mathrm{lim} D$. This means the groups $H^d (I; D)$ can be interpreted as ``higher limit functors'' $ \mathrm{lim}^d D$. For our purposes, the abelian category $\mathbf{A}$ will be the category of abelian groups $\mathbf{Ab}$ and $I$ will be the poset $\mathcal{S} (2)$. Moreover, $D$ is equal to $H^k (\mathcal{H}; \F)$ where $\F$ is either $\F_3$ or $\F_2$. So, we have a diagram similar to Figure~\ref{fig:poset2} but with the arrows reversed (see Figure~\ref{fig:opposet}). This gives the following functor: \begin{align*}
    H^k (\mathcal{H} ; \F) : \mathcal{S} (2)^{op} &\longrightarrow \mathbf{Ab} \\ 
    \ii &\longmapsto H^k (\mathcal{H} (\ii) ; \F).
\end{align*} 
\begin{figure}
    \centering
    \begin{tikzcd}
    & H^k (\mathcal{H} (1,2)) \ar[d] \\ H^k (\mathcal{H} (2)) \ar[ur] \ar[d] \ar[r] & H^k    (\mathcal{H} (0,1,2))  & H^k (\mathcal{H} (1)) \ar[d] \ar[ul] \ar[l]  \\ H^k       (\mathcal{H} (0,2)) \ar[ur]  &  H^k (\mathcal{H}   (0)) \ar[u] \ar[r] \ar[l] & H^k (\mathcal{H} (0,1)) \ar[ul] 
    \end{tikzcd}
    \caption{Diagram of $H^k (\mathcal{H} (\ii))$ for the poset $\mathcal{S} (2)$}
    \label{fig:opposet}
\end{figure}

To compute the higher limits, we write the cosimplicial replacement of the diagram in Figure~\ref{fig:opposet}. By Definition~\ref{co-sim-rep}, the cosimplicial replacement of $H^k (\mathcal{H} ; \F)$ is given by \begin{equation} \label{cosimrepS2}
      \prod_{\ii \in\mathcal{S} (2)} H^k (\mathcal{H} (\ii) ;\F) \doublerightarrow{}{} \prod_{\ii_0 \rightarrow \ii_1} H^k (\mathcal{H} (\ii_1) ;\F) \triplerightarrow{}{} \prod_{\ii_0 \rightarrow \ii_1 \rightarrow \ii_2} H^k (\mathcal{H} (\ii_2) ;\F) \longrightarrow 0 \end{equation}    
such that the projection of the coface map $d^j : \prod_{\ii} H^k (\mathcal{H} (\ii) ;\F) \rightarrow \prod_{\ii_0 \rightarrow \ii_1} H^k (\mathcal{H} (\ii_1) ;\F)$ onto the factor $H^k (\mathcal{H} (\ii_1);\F)$ indexed by $(\ii_0 \rightarrow \ii_1)$ is the composition of a projection from the product with \begin{itemize}
    \item the identity map from the factor $H^k (\mathcal{H} (\ii_1) ; \F)$ when $j=0$,
    \item the map $H^k (\mathcal{H} (\ii_0) ; \F) \rightarrow H^k (\mathcal{H} (\ii_1) ; \F)$ when $j=1$. 
\end{itemize}
The projection of the coface map $d^j : \prod_{\ii_0 \rightarrow \ii_1} H^k (\mathcal{H} (\ii_1) ;\F) \rightarrow \prod_{\ii_0 \rightarrow \ii_1 \rightarrow \ii_2} H^k (\mathcal{H} (\ii_2) ;\F)$ onto the factor $H^k (\mathcal{H} (\ii_2);\F)$ indexed by $(\ii_0 \rightarrow \ii_1 \rightarrow \ii_2)$ is the composition of a projection from the product with \begin{itemize}
    \item the identity map from the factor $H^k (\mathcal{H} (\ii_2) ;\F)$ indexed by $\ii_1 \rightarrow \ii_2$ when $j=0$,
    \item the identity map from the factor $H^k (\mathcal{H} (\ii_2) ;\F)$ indexed by $\ii_0 \rightarrow \ii_2$ when $j=1$,
    \item the map $H^k (\mathcal{H} (\ii_1) ; \F) \rightarrow H^k (\mathcal{H} (\ii_2) ; \F)$ indexed by $\ii_0 \rightarrow \ii_1$ when $j=2$.
\end{itemize}
Taking the alternate sum of the coface maps $d^j$ makes (\ref{cosimrepS2}) a cochain complex. The following theorem is due to Bausfield-Kan \cite{bk72}.

\begin{theo} \label{ssbauskan}
Let $D: I \rightarrow \mathbf{Top} $ be a diagram and let $\mathcal{E}_*$ be a homology theory. Write $\mathcal{E}^k (D)$ for the diagram $I^{op} \rightarrow \mathbf{Ab}$. Then there is a spectral sequence $E^{d,k}_2 = H^d (I^{op} ; \mathcal{E}^k (D) ) $ converging to $\mathcal{E}^{d+k} (\mathrm{hocolim} D)$. The differentials have the form $d^r : E^{d,k}_r \longrightarrow E^{d+r, k-r+1}_r $.
\end{theo}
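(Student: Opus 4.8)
The statement is classical (Bousfield--Kan \cite[\S XI.7]{bk72}); the plan is to reconstruct it from the standard simplicial model of a homotopy colimit equipped with its skeletal filtration, after which everything reduces to matching conventions. First I would recall the model $\mathrm{hocolim}\, D = |[n] \mapsto \coprod_{\ii_0 \to \cdots \to \ii_n} D(\ii_0)|$, in which the face maps either drop an object from the chain or apply a structure map of $D$, and the degeneracies insert identity morphisms. Its geometric realization carries the skeletal filtration $F_0 \subseteq F_1 \subseteq \cdots \subseteq \mathrm{hocolim}\, D$, and the structural input I would use is that each subquotient $F_d / F_{d-1}$ is, up to homeomorphism, $S^d \wedge \bigl(\coprod D(\ii_0)\bigr)_+$, the coproduct taken over the nondegenerate $d$-simplices of the nerve of $I$, i.e. the strictly increasing chains $\ii_0 \to \cdots \to \ii_d$.

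Next I would feed this filtered space into the cohomology theory $\mathcal{E}^*$ associated to $\mathcal{E}_*$. A filtered space gives an exact couple, hence a spectral sequence with $E_1^{d,k} = \widetilde{\mathcal{E}}^{\,d+k}(F_d/F_{d-1})$ and differentials $d^r : E_r^{d,k} \to E_r^{d+r,\,k-r+1}$, abutting to $\mathcal{E}^{d+k}(\mathrm{hocolim}\, D)$; the abutment is strong as soon as the filtration is finite, which is the case exactly when the nerve of $I$ is finite-dimensional --- in particular for the poset $\mathcal{S}(2)$ used in the sequel, whose nerve is $2$-dimensional --- and for general $I$ one invokes the convergence statement of \cite{bk72}. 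The suspension isomorphism and additivity then identify $E_1^{d,k}$ with the product $\prod \mathcal{E}^k(D(\ii_0))$ over the nondegenerate $d$-chains of $I$ (equivalently, with the $d$-th term of a cosimplicial replacement), and a direct inspection of the cellular boundary map of a geometric realization shows that $d_1$ is the alternating sum $\sum_j (-1)^j (d^j)^*$ of the maps on $\mathcal{E}^*$ induced by the coface operators.

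Finally I would observe that the cochain complex $\bigl(E_1^{*,k}, d_1\bigr)$ produced this way is, term by term and with matching differentials, exactly the cosimplicial replacement of the functor $\mathcal{E}^k(D) : I^{op} \to \mathbf{Ab}$ written out in Definition~\ref{co-sim-rep} --- the appearance of $I^{op}$ being forced by the contravariance of $\mathcal{E}^*$, which reverses the arrows of $I$ --- so that its $d$-th cohomology is $H^d(I^{op}; \mathcal{E}^k(D)) = \lim^{d} \mathcal{E}^k(D)$. This gives $E_2^{d,k} = H^d(I^{op}; \mathcal{E}^k(D))$ as claimed, with the differentials of the asserted bidegree inherited from the exact couple.

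The step demanding the most care is entirely one of bookkeeping: fixing the variance of the simplicial model so that passing to $\mathcal{E}^*$ yields a diagram over $I^{op}$, and then checking that the face maps of the realization induce precisely the coface maps $d^0, d^1, \dots$ of Definition~\ref{co-sim-rep}, with the degeneracy contributions absorbed by passing to the normalized complex. For the use made of this theorem later one may simply cite \cite{bk72}: $\mathcal{S}(2)$ is a finite poset and the coefficient theories are ordinary $\F_2$- and $\F_3$-cohomology, so the filtration is finite and no $\lim^{1}$ corrections intervene.
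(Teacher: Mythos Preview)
The paper does not prove this theorem at all: it is stated as a result ``due to Bousfield--Kan'' with a citation to \cite{bk72}, and no argument is given. Your proposal is a correct and standard reconstruction of that spectral sequence via the skeletal filtration of the simplicial model for the homotopy colimit, so it is entirely adequate --- indeed more than the paper itself supplies. Your remark that for the finite poset $\mathcal{S}(2)$ the filtration is finite and convergence is automatic is exactly the point relevant to the paper's applications.
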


To simplify our computations for the category $\mathcal{S} (2)$ and the functor $H^k (\mathcal{H}; \F)$, we can limit our focus to cohomologies $H^d (\mathcal{S} (2)^{op}; H^k (\mathcal{H} ;\F))$ for $d \le 2$ since the diagram in Figure~\ref{fig:opposet} only contains chains of length up to $3$, resulting in non-zero terms in (\ref{cosimrepS2}) only up to codimension $3$. Moreover, according to the following lemma, $H^2 (\mathcal{S} (2)^{op}; H^k (\mathcal{H};\F))$ is always zero, thus we only need to compute cohomologies for $d=0,1$.  

\begin{lema} \label{lem:H2(S2;H)}
Let $\F$ be a field. Then $\mathrm{lim}^2 H^k (\mathcal{H}; \F) =H^2 (\mathcal{S} (2)^{op} ; H^k (\mathcal{H}; \F) )  \cong 0$.    
\end{lema}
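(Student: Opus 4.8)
The plan is to read $\mathrm{lim}^2 H^k(\mathcal{H};\F)=H^2(\mathcal{S}(2)^{op};H^k(\mathcal{H};\F))$ off the cosimplicial replacement (\ref{cosimrepS2}) directly. Every strict chain in $\mathcal{S}(2)^{op}$ has at most three terms — a strict chain $\ii_0\subsetneq\ii_1\subsetneq\ii_2$ must have $\ii_2=\{0,1,2\}$, and there is no strict chain of four subsets of $\{0,1,2\}$ — so $\mathrm{lim}^d H^k(\mathcal{H};\F)$ is the $d$-th cohomology of a cochain complex supported in degrees $0,1,2$, and in particular $\mathrm{lim}^2 H^k(\mathcal{H};\F)=\mathrm{coker}(\delta^1\colon C^1\to C^2)$ for the last map of that complex. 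There are exactly six maximal chains $(i)\subsetneq\{i,j\}\subsetneq\{0,1,2\}$, so $C^2\cong H^k(\mathcal{H}(0,1,2);\F)^{\oplus 6}$.

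Next I would make $\delta^1$ explicit using the coface-map description accompanying (\ref{cosimrepS2}). The chain $(i)\subsetneq\{i,j\}\subsetneq\{0,1,2\}$ has faces $\{i,j\}\subsetneq\{0,1,2\}$, $(i)\subsetneq\{0,1,2\}$, $(i)\subsetneq\{i,j\}$; the first two contribute $\pm\mathrm{id}$ on $H^k(\mathcal{H}(0,1,2);\F)$ and the third contributes $\psi_{ij}\colon H^k(\mathcal{H}(i,j);\F)\to H^k(\mathcal{H}(0,1,2);\F)$, the map induced by the forgetful map $\mathcal{H}(0,1,2)\to\mathcal{H}(i,j)$. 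On the six entries of $C^1$ indexed by chains $\ast\subsetneq\{0,1,2\}$, $\delta^1$ is the simplicial coboundary with coefficients $H^k(\mathcal{H}(0,1,2);\F)$ of the hexagon whose six vertices are the singletons and pairs of $\{0,1,2\}$ and whose six edges are the chains $(i)\subsetneq\{i,j\}$; since that hexagon is homotopy equivalent to $S^1$, the cokernel of this part is $H^1(S^1;H^k(\mathcal{H}(0,1,2);\F))\cong H^k(\mathcal{H}(0,1,2);\F)$ via the signed sum around the cycle. The remaining six entries of $C^1$, indexed by the chains $(i)\subsetneq\{i,j\}$, contribute the $\psi_{ij}$ diagonally, so modulo the above their image is $\mathrm{im}\,\psi_{01}+\mathrm{im}\,\psi_{02}+\mathrm{im}\,\psi_{12}$. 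Collecting this,
\[
  \mathrm{lim}^2 H^k(\mathcal{H};\F)\;\cong\;H^k(\mathcal{H}(0,1,2);\F)\big/\big(\mathrm{im}\,\psi_{01}+\mathrm{im}\,\psi_{02}+\mathrm{im}\,\psi_{12}\big).
\]

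It is then enough to exhibit one of the $\psi_{ij}$ as a surjection, and the natural candidate is $\psi_{02}$. By the identifications collected at the start of \S\ref{section:2.2}, both $\mathcal{H}(0,2)$ and $\mathcal{H}(0,1,2)$ are $U(3)/N(3)\times PU(3)$, and the forgetful map $\mathcal{H}(0,1,2)\to\mathcal{H}(0,2)$, which only drops the intermediate torus of the chain, is a homotopy equivalence; hence $\psi_{02}$ is an isomorphism and the quotient above is $0$. The step that needs the most care — and which I expect to be the main obstacle — is verifying this last assertion from the homogeneous-space models of $\mathcal{H}(0,2)$ and $\mathcal{H}(0,1,2)$; everything else is bookkeeping with the cosimplicial replacement and the hexagon. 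If instead that forgetful map is only a finite covering, one argues the same way after combining all three $\psi_{ij}$ and reading joint surjectivity onto $H^k(\mathcal{H}(0,1,2);\F)$ off the cohomology computations of \S\ref{section:2.2}.
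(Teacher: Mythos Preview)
Your argument is correct and takes a more structural route than the paper. The paper proves surjectivity of $d^0-d^1+d^2$ by brute force: given $(a_1,\ldots,a_6)\in\prod H^k(\mathcal{H}(0,1,2);\F)$ it writes down an explicit preimage $b$ supported only on the six $C^1$-entries indexed by $(0)\to(0,1,2)$, $(2)\to(0,1,2)$, $(0,1)\to(0,1,2)$, $(1,2)\to(0,1,2)$, $(0)\to(0,2)$, and $(2)\to(0,2)$. Your hexagon decomposition packages the same linear algebra conceptually and yields the clean identity
\[
\mathrm{lim}^2 H^k(\mathcal{H};\F)\;\cong\;H^k(\mathcal{H}(0,1,2);\F)\big/\big(\mathrm{im}\,\psi_{01}+\mathrm{im}\,\psi_{02}+\mathrm{im}\,\psi_{12}\big),
\]
which is more transparent and would adapt to larger posets $\mathcal{S}(n)$. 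Both proofs rest on the same input from \S\ref{section:2.2}, namely the identification $\mathcal{H}(0,2)=\mathcal{H}(0,1,2)$: the paper uses it to place elements of $H^k(\mathcal{H}(0,1,2);\F)$ into the $(0)\to(0,2)$ and $(2)\to(0,2)$ slots of $b$, and pushing those through $d^2$ is exactly your assertion that $\psi_{02}$ is an isomorphism. So the step you flag as needing the most care is also the hinge of the paper's argument; the paper simply reads it off the listed homogeneous-space models of the $\mathcal{H}(\ii)$ rather than arguing it separately.
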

\begin{proof}
From the description of the spaces $\mathcal{H} (\ii)$ in \S \ref{section:2.2}, we see that $\mathcal{H} (0,2) = \mathcal{H} (0,1,2)$. Consider the map \[ d^0-d^1+d^2 : \prod_{\ii_0 \rightarrow \ii_1} H^k (\mathcal{H} (\ii_1) ;\F) \longrightarrow \prod_{\ii_0 \rightarrow \ii_1 \rightarrow \ii_2} H^k (\mathcal{H} (\ii_2) ;\F). \] Note that in $\mathcal{S} (2)^{op}$ all $3$-chains end with $(0,1,2)$ and hence $\ii_2 = (0,1,2)$ is constant in all terms in the product $\prod_{\ii_0 \rightarrow \ii_1 \rightarrow \ii_2} H^k (\mathcal{H} (\ii_2) ;\F)$. So, we can rewrite above map as \[ d^0-d^1+d^2 : \prod_{\ii_0 \rightarrow \ii_1} H^k (\mathcal{H} (\ii_1) ;\F) \longrightarrow \prod_{\ii_0 \rightarrow \ii_1 \rightarrow (0,1,2)} H^k (\mathcal{H} (0,1,2) ;\F). \] Let any $(a_1, \dots ,a_6) \in \prod_{\ii_0 \rightarrow \ii_1 \rightarrow (0,1,2)} H^k (\mathcal{H} (0,1,2) ;\F)$ be given. We pick an element $b$ in $\prod_{\ii_0 \rightarrow \ii_1} H^k (\mathcal{H} (\ii_1) ;\F)$ as follows. Choose $a_3 -a_1, a_4 -a_6 , a_3, a_4 \in H^k (\mathcal{H} (0,1,2) ;\F)$ as entries of $b$ that are indexed by $(0)\rightarrow (0,1,2)$, $(2) \rightarrow (0,1,2)$, $(0,1) \rightarrow (0,1,2)$, $(1,2)\rightarrow (0,1,2)$ respectively. For the entries which are indexed by $(0) \rightarrow (0,2)$, $(2) \rightarrow (0,1,2)$ choose $a_2 +a_3 -a_1,a_5 +a_4 -a_6 \in H^k (\mathcal{H} (0,2) ; \F) = H^k (\mathcal{H} (0,1,2) ;\F)$ respectively. Choose all other entries in $b$ to be $0$. Then from our description of the cochain maps, \[ d^0 -d^1 +d^2 : b \longmapsto (a_1 ,\dots,a_6 ) \in \prod_{\ii_0 \rightarrow \ii_1 \rightarrow (0,1,2)} H^k (\mathcal{H} (0,1,2) ;\F). \] This shows that $d^0 - d^1 +d^2$ is surjective and hence $H^2 (\mathcal{S} (2)^{op} ; H^k (\mathcal{H}; R) )  = 0$.     
\end{proof}  

\subsection{Mod \texorpdfstring{$p$}{p} Cohomology of \texorpdfstring{$E_{com} U(3)$}{EcomU(3)}} \label{subsection:2.3.2}

First we show that $H^* (E_{com} U(3) ; \Z )$ can have only $2$ and $3$-torsion. This follows from a much more general statement that the integral cohomology of $B_{com} G_{\mathds{1}}$ and $E_{com} G_{\mathds{1}}$ can have only $p$-torsion for $ p$ dividing the order of the Weyl group $W$ of $G$. To prove this we need the following lemma.

\begin{lema} \label{lema:iso-F} (\cite{Adem-Gomez})
Fix a compact connected Lie group $G$. Let $T \subset G$ be a maximal torus and $W$ be the Weyl group. Then we have isomorphisms \[ H^* (G/T \times_{W} BT ; \F ) \cong H^* (B_{com} G_{\mathds{1}} ; \F) \quad \text{and} \quad H^* (G/T \times_{W} G/T ; \F ) \cong H^* (E_{com} G_{\mathds{1}} ; \F) \] whenever $\mathrm{gcd} (\mathrm{char} (\F) , |W|) = 1$.
\end{lema}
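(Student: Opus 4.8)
The plan is to exploit the well-known rational/coprime-characteristic collapse of the relevant spectral sequences together with the homotopy-colimit descriptions of $B_{com}G_{\mathds 1}$ and $E_{com}G_{\mathds 1}$ already recalled above. Assume throughout that $\mathrm{char}(\F)$ is coprime to $|W|$, so that in particular $|W|$ is invertible in $\F$ and transfer arguments for the $W$-action are available.

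First I would treat the $E_{com}$ statement. Using Theorem~\ref{adem-gomez-hocolim}, $E_{com}G_{\mathds 1}\simeq\mathrm{hocolim}_{\ii\in\mathcal S(n)}\mathcal H_G(\ii)$, and by the homeomorphism (\ref{HG}) each $\mathcal H_G(\ii)$ is a disjoint union of spaces of the form $G\times_{N(\mathbf S_{\ii})}G/S_{i_0}$. The key point is that when $|W|$ is invertible in $\F$, the inclusion of a maximal torus and the $W$-action let one replace every $N_G(\mathbf S_{\ii})$ by the corresponding torus up to $\F$-cohomology (a transfer argument along $T/S\to N_G(S)/S$, whose index divides $|W|$), so that the whole diagram $\ii\mapsto H^*(\mathcal H_G(\ii);\F)$ becomes $\F$-cohomology-isomorphic to the diagram one would obtain from the single space $G/T\times_W G/T$ filtered by the $W$-orbit-type stratification — equivalently, the colimit of the diagram computes $H^*(G/T\times_W G/T;\F)$. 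I would make this precise by comparing the Bousfield–Kan spectral sequence of Theorem~\ref{ssbauskan} for the $\mathcal H_G$-diagram with the analogous decomposition of $G/T\times_W G/T$; both have the same $E_2$-page after applying the coprime-transfer identification, and the naturality of the comparison map $\mathrm{hocolim}\,\mathcal H_G(\ii)\to G/T\times_W G/T$ (projecting each $G\times_{N}G/S_{i_0}$ appropriately) forces the two spectral sequences to agree, giving the isomorphism $H^*(E_{com}G_{\mathds 1};\F)\cong H^*(G/T\times_W G/T;\F)$.

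For the $B_{com}$ statement I would argue the same way using the companion functor $\mathcal F_G$ (the one described in \S6 of \cite{Adem-Gomez}, with $\mathcal H_G$ replaced by its quotient that omits the final $G/S_0$-factor and instead records a point of $BT_{I_0}$-type data), whose homotopy colimit is $B_{com}G_{\mathds 1}$; replacing $N_G(\mathbf S_{\ii})$ by the torus via the same transfer identifies the diagram with the $W$-stratification of $G/T\times_W BT$, giving $H^*(B_{com}G_{\mathds 1};\F)\cong H^*(G/T\times_W BT;\F)$. Alternatively, and perhaps more cleanly, one can deduce the $B_{com}$ case from the $E_{com}$ case using the fibration $E_{com}G_{\mathds 1}\to B_{com}G_{\mathds 1}\to BG$ and the principal-bundle fibration $G/T\to G/T\times_W BT\to BT\times_W EW$ compatibly with $BG\to$ its torus model, comparing Serre spectral sequences over $\F$.

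The main obstacle I expect is the transfer/coprime-reduction step — specifically, checking carefully that replacing each normalizer $N_G(\mathbf S_{\ii})$ by the relevant torus is compatible across \emph{all} the structure maps $p_{\ii,\jj}$ of the diagram simultaneously (not just object-by-object), so that one really gets an isomorphism of functors $\mathcal S(n)^{op}\to\mathbf{Ab}$ and hence of Bousfield–Kan spectral sequences. The finite-index transfers $H^*(BT;\F)\to H^*(BN_G(T);\F)$ and $H^*(G/T;\F)\to H^*((G/T)/W;\F)$ are classical, but organizing their naturality over the poset $\mathcal S(n)$ and over the different equivalence classes $\mathcal E(\ii)$ is where the real work lies; everything else is formal once that compatibility is in place. (I expect the authors cite \cite{Adem-Gomez} for precisely this, since the statement is attributed there.)
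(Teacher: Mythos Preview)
The paper does not actually prove this lemma: it is stated with a citation to \cite{Adem-Gomez} and used as a black box, so there is no ``paper's own proof'' to compare against. You correctly anticipated this at the end of your proposal.

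That said, your route through the poset $\mathcal S(n)$ and the Bousfield--Kan spectral sequence is considerably more indirect than what is done in \cite{Adem-Gomez}, and the compatibility problem you flag (making the transfer replacements coherent across all $p_{\ii,\jj}$) is a real obstacle in your approach that does not arise in theirs. The argument in \cite{Adem-Gomez} works at the simplicial level rather than the homotopy-colimit level: the conjugation map
\[
G/T \times T^n \longrightarrow \mathrm{Hom}(\Z^n,G)_{\mathds 1},\qquad (gT,t_1,\dots,t_n)\longmapsto (gt_1g^{-1},\dots,gt_ng^{-1})
\]
is $W$-invariant, surjective, and assembles over $n$ into a map of simplicial spaces. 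Geometric realization gives natural maps $G/T\times_W BT\to B_{com}G_{\mathds 1}$ and $G/T\times_W G/T\to E_{com}G_{\mathds 1}$, and a single transfer argument for the $W$-cover $G/T\times T^n\to G/T\times_W T^n$ (with $|W|$ invertible in $\F$) shows these are levelwise $\F$-cohomology isomorphisms, hence isomorphisms after realization. This avoids the poset entirely and the naturality is automatic because one is working with a single map of simplicial spaces. Your approach could in principle be made to work, but the bookkeeping you worry about is exactly the price of not using the simplicial model directly.
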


\begin{prop} \label{prop:ptorsion}
Let $G$ be a compact connected Lie group, $T \subset G$ be a maximal torus and $W$ be the corresponding Weyl group. Then $H^* (B_{com} G_{\mathds{1}} ; \Z)$ and $H^* (E_{com} G_{\mathds{1}} ; \Z)$ can only have $p$-torsion for those $p$ dividing $|W|$.
\end{prop}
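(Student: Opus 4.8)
The plan is to show that localized at a prime $p$ not dividing $|W|$, the integral cohomology of $B_{com}G_{\mathds 1}$ and $E_{com}G_{\mathds 1}$ is torsion-free, which forces all torsion in the integral cohomology to be supported at primes dividing $|W|$. The key input is Lemma~\ref{lema:iso-F}: for any field $\F$ with $\mathrm{char}(\F)\nmid |W|$ we have $H^*(B_{com}G_{\mathds 1};\F)\cong H^*(G/T\times_W BT;\F)$ and $H^*(E_{com}G_{\mathds 1};\F)\cong H^*(G/T\times_W G/T;\F)$. So the first step is to record that the right-hand spaces have torsion-free integral cohomology away from the primes dividing $|W|$: since $W$ acts freely away from $|W|$-torsion, a transfer argument shows $H^*(G/T\times_W X;\Z_{(p)})\cong \big(H^*(G/T\times X;\Z_{(p)})\big)^{W}$ for $p\nmid|W|$, and $H^*(G/T;\Z)$ and $H^*(BT;\Z)$ are both free abelian, concentrated in even degrees; hence $H^*(G/T\times X;\Z_{(p)})$ is free over $\Z_{(p)}$, and the $W$-invariants of a free $\Z_{(p)}$-module (with $|W|$ invertible) are again free. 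Thus $H^*(G/T\times_W BT;\Z_{(p)})$ and $H^*(G/T\times_W G/T;\Z_{(p)})$ are free $\Z_{(p)}$-modules for every $p\nmid|W|$.

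The second step is to transfer this conclusion to $B_{com}G_{\mathds 1}$ and $E_{com}G_{\mathds 1}$ using Lemma~\ref{lema:iso-F} together with the universal coefficient theorem. Fix $p\nmid|W|$. If $H^*(E_{com}G_{\mathds 1};\Z)$ had $p$-torsion in some degree, then by universal coefficients $H^*(E_{com}G_{\mathds 1};\F_p)$ would be strictly larger (in total $\F_p$-dimension, degree by degree) than $H^*(E_{com}G_{\mathds 1};\Q)\otimes\F_p$ would predict; more precisely, comparing ranks one sees that $\dim_{\F_p}H^n(-;\F_p)>\mathrm{rank}_{\Z_{(p)}}H^n(-;\Z_{(p)})$ for some $n$. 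But $H^*(E_{com}G_{\mathds 1};\F_p)\cong H^*(G/T\times_W G/T;\F_p)$ and, by Step~1, $H^*(G/T\times_W G/T;\Z_{(p)})$ is free, so by universal coefficients $\dim_{\F_p}H^n(G/T\times_W G/T;\F_p)=\mathrm{rank}_{\Z_{(p)}}H^n(G/T\times_W G/T;\Z_{(p)})$ for all $n$. One still needs that $H^*(E_{com}G_{\mathds 1};\Z_{(p)})$ and $H^*(G/T\times_W G/T;\Z_{(p)})$ have the same ranks; this follows because both agree rationally — apply Lemma~\ref{lema:iso-F} with $\F=\Q$ as well (as $\mathrm{char}(\Q)=0$). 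Combining, $H^*(E_{com}G_{\mathds 1};\Z)$ has no $p$-torsion. The identical argument with $BT$ in place of the second $G/T$ handles $B_{com}G_{\mathds 1}$.

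Since this holds for every prime $p\nmid|W|$, all torsion in $H^*(B_{com}G_{\mathds 1};\Z)$ and $H^*(E_{com}G_{\mathds 1};\Z)$ must be $p$-torsion for $p\mid|W|$, which is the claim. For the application to $U(3)$, note $|W|=|\Sigma_3|=6$, so the only possible torsion primes are $2$ and $3$, as asserted at the start of \S\ref{subsection:2.3.2}.

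I expect the main obstacle to be the bookkeeping in Step~2: carefully extracting from the universal coefficient theorem the statement "$H^*(-;\F_p)$ and $H^*(-;\Z_{(p)})$ have matching dimensions in every degree $\iff$ no $p$-torsion," and ensuring the rank comparison between $E_{com}G_{\mathds 1}$ and $G/T\times_W G/T$ is clean — the cleanest route is probably to invoke Lemma~\ref{lema:iso-F} once with $\F=\F_p$ and once with $\F=\Q$, and then deduce freeness purely formally. The geometric content (the transfer argument and freeness of $W$-invariants away from $|W|$) is standard and should be dispatched quickly by citing the free action of $W$ on the relevant spaces after inverting $|W|$.
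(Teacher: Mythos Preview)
Your proposal is correct and follows essentially the same route as the paper: a transfer argument shows $H^*(G/T\times_W BT;\Z_{(q)})$ and $H^*(G/T\times_W G/T;\Z_{(q)})$ are torsion-free for $q\nmid|W|$, and then Lemma~\ref{lema:iso-F} transports this to $B_{com}G_{\mathds 1}$ and $E_{com}G_{\mathds 1}$. The paper compresses your Step~2 into the single sentence ``From Lemma~\ref{lema:iso-F}, it is enough to prove the following claim,'' whereas you spell out the universal-coefficient bookkeeping (comparing $\F_p$- and $\Q$-dimensions) that justifies that reduction; otherwise the arguments coincide.
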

\begin{proof}
From Lemma~\ref{lema:iso-F}, it is enough to prove the following claim.\\
\textbf{Claim.} $H^* (G/T \times_W BT ; \Z)$ and $H^* (G/T \times_W G/T ; \Z )$ can only have $p$-torsion for those $p$ dividing $|W|$. \\
\textit{Proof of Claim.} Consider the covering space $\pi: G/T \times BT \rightarrow G/T \times_W BT$ and the transfer homomorphism $\tau^* : H^k (G/T \times BT ;R) \rightarrow H^k (G/T \times_W BT ; R)$, where $R$ is a commutative ring. The composision $\tau^* \circ \pi^* $ is the multiplication by $|W|$ in the $R$-module $H^k ( G/T\times_W BT ; R)$. \\
Let $q$ be a prime such that $\mathrm{gcd} (q, |W|) =1$. Set $R = \Z_{q} $ the localization of the integers at $q$. Since $\mathrm{gcd} (q, |W|) =1$, $|W|$ is invertible in $\Z_{q}$ and multiplication by $|W|$ in $H^k ( G/T\times_W BT ; \Z_{q})$ is an isomorphism. Therefore $\tau^* \circ \pi^*$ is an isomorphism and $\pi^* : H^k (G/T \times_W BT ;\Z_{q}) \rightarrow H^k (G/T \times BT ; \Z_{q})$ is injective. As $H^* (G/T \times BT ; \Z)$ is torsion free, $H^* (G/T \times_W BT ; \Z)$ has no $q$-torsion, otherwise $\pi^*$ would not be injective. Therefore $H^* (G/T \times_W BT ; \Z)$ can only have $p$-torsion for those $p$ dividing $|W|$. The argument for $H^* (G/T \times_W G/T ; \Z )$ is similar.
\end{proof}

Due to Proposition~\ref{prop:ptorsion}, $H^* (E_{com} U(3) ; \Z)$ only has torsion at the primes 2 and 3. So, the integral cohomology of $E_{com} U(3)$ can be completely described from the mod $2$ and mod $3$ cohomologies. We focus on computing these in the rest of this section. \\

With $\F_2$ coefficients, we can describe the $E_2$-page of the spectral sequence in Theorem~\ref{ssbauskan}. Only finitely many terms in the spectral sequence are non-zero. Explicitly from the computation of $\mathrm{lim}^0$ and $\mathrm{lim}^1$ (see Appendix~\ref{appendixA.2}), we have \begin{align*}
    E_2^{0,k} = \mathrm{lim}^0 H^k (\mathcal{H} ; \F_2) &\cong \F_2 \quad \text{for } k=0,4,5\\ 
    E_2^{1,k} = \mathrm{lim}^1 H^k (\mathcal{H} ; \F_2) &\cong \F_2 \quad \text{for } k=3,5,8,11,12,13\\
    E_2^{0,6} = \mathrm{lim}^0 H^6 (\mathcal{H} ; \F_2) &\cong \F_2^{\oplus 2} \\
    E_2^{1,7} = \mathrm{lim}^1 H^7 (\mathcal{H} ; \F_2) &\cong \F_2^{\oplus 2}  \\
    E_2^{d,k} = \mathrm{lim}^d H^k (\mathcal{H} ; \F_2) &\cong 0  \quad \text{otherwise}.
\end{align*}

We see immediately that there is no non-trivial differential on the $E_2$-page. Hence the spectral sequence collapses at the $E_2$-page and $E_2^{*,*} = E_{\infty}^{*,*}$. This gives the following theorem: 
\begin{theo} \label{EcomU(3)-mod2}
The $\F_2$-cohomology of $E_{com} U(3)$ is given by \[ H^d (E_{com} U(3) ; \F_2 ) = \begin{cases} \F_2 & d=0,5,9,12,13,14 \\ \F_2^{\oplus 2} & d=4,8 \\ \F_2^{\oplus 3} & d=6 \\ 0  & \text{otherwise}   \end{cases} . \] 
\end{theo}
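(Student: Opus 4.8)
The plan is to run the Bousfield--Kan spectral sequence of Theorem~\ref{ssbauskan} applied to the diagram $\mathcal{H}\colon\mathcal{S}(2)\to\mathbf{Top}$, using Theorem~\ref{adem-gomez-hocolim} to identify $\mathrm{hocolim}\,\mathcal{H}\simeq E_{com}U(3)$ (note $E_{com}U(3)$ is connected, so $E_{com}U(3)_{\mathds 1}=E_{com}U(3)$). By Lemma~\ref{lem:H2(S2;H)} the only columns are $d=0,1$, so on the $E_2$-page all differentials $d^r\colon E_r^{d,k}\to E_r^{d+r,k-r+1}$ with $r\ge 2$ land in columns $d+r\ge 2$ and are therefore zero. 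Hence the spectral sequence degenerates at $E_2$ for trivial reasons, and $H^n(E_{com}U(3);\F_2)$ is the sum of $E_2^{0,n}=\mathrm{lim}^0 H^n(\mathcal{H};\F_2)$ and $E_2^{1,n-1}=\mathrm{lim}^1 H^{n-1}(\mathcal{H};\F_2)$ — as $\F_2$-vector spaces there is no extension problem. So everything reduces to computing the two higher-limit functors $\mathrm{lim}^0$ and $\mathrm{lim}^1$ of the functor $k\mapsto H^k(\mathcal{H};\F_2)$ over $\mathcal{S}(2)^{op}$.

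First I would assemble the input: the mod $2$ cohomology of each of the seven spaces $\mathcal{H}(\ii)$. These are built from the ingredients already computed in \S\ref{section:2.2}: $H^*(\UFl_3(\C);\F_2)=\F_2[\gamma]/(\gamma^7)$ (Corollary~\ref{cohoring3}), $H^*(\Fl_3(\C)\times_{\Sigma_3}\Fl_3(\C);\F_2)$, $H^*(U(3)/T(2);\F_2)=\F_2[z_5,\beta]/(z_5^2,\beta^2)$ (Theorem~\ref{u3t2mod2}), together with $H^*(PU(3);\F_2)$ and $H^*(U(3)/N(2);\F_2)$, applying the Künneth theorem to the product descriptions of the $\mathcal{H}(\ii)$ listed in \S\ref{section:2.2}. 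Then I would write down explicitly, for each degree $k$, the diagram in Figure~\ref{fig:opposet} of $\F_2$-vector spaces with the maps induced by the structure maps $p_{\ii,\jj}$ (these are the projections, so the induced maps in cohomology are computed via Künneth/pullback along the evident inclusions and covering maps), form the cosimplicial replacement (\ref{cosimrepS2}), take the alternating sum of coface maps, and compute the kernel in degree $0$ (this is $\mathrm{lim}^0=\lim$, an explicit equalizer over the seven objects and the covering maps between them) and the cohomology in degree $1$ (this is $\mathrm{lim}^1$). This is a finite, if lengthy, linear-algebra computation degree by degree; the answer is recorded in Appendix~\ref{appendixA.2} and is summarized in the display preceding the theorem.

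The main obstacle is the bookkeeping in the $\mathrm{lim}^1$ computation: one must correctly identify the restriction maps $H^k(\mathcal{H}(\jj);\F_2)\to H^k(\mathcal{H}(\ii);\F_2)$ for $\jj\subset\ii$, which requires understanding how the projection maps $\mathcal{H}(\ii)\to\mathcal{H}(\jj)$ interact with the homeomorphisms (\ref{HG}) — in particular the maps $PU(3)\to \UFl_3(\C)\times PU(3)$-type comparisons and the maps between the twisted products $\Fl_3(\C)\times_{\Sigma_3}(-)$ — and then tracking these through Künneth and the Serre spectral sequences of \S\ref{section:2.2} to pin down generators. A useful simplification, already exploited in the proof of Lemma~\ref{lem:H2(S2;H)}, is that $\mathcal{H}(0,2)=\mathcal{H}(0,1,2)$, which collapses part of the diagram and makes the cosimplicial complex smaller. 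Once all the restriction maps are identified, computing kernels and cokernels of the two coface-difference maps is routine; the degrees where $\mathrm{lim}^0$ or $\mathrm{lim}^1$ is nonzero, and their dimensions, then feed directly into $H^n=E_2^{0,n}\oplus E_2^{1,n-1}$ to give the stated answer, with the top class in degree $14$ coming from $\mathrm{lim}^1 H^{13}$.
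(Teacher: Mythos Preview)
Your proposal is correct and follows essentially the same approach as the paper: run the Bousfield--Kan spectral sequence for the homotopy colimit, observe that Lemma~\ref{lem:H2(S2;H)} forces degeneration at $E_2$ (only columns $d=0,1$ survive, so all $d_r$ with $r\ge 2$ vanish), and then read off $H^n(E_{com}U(3);\F_2)=\mathrm{lim}^0 H^n\oplus\mathrm{lim}^1 H^{n-1}$ from the degree-by-degree linear-algebra computation of the higher limits carried out in Appendix~\ref{appendixA.2}. Your identification of the bookkeeping with the restriction maps as the main labor, and the simplification $\mathcal{H}(0,2)=\mathcal{H}(0,1,2)$, matches exactly what the paper does.
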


Similarly, with $\F_3$ coefficients, we can also describe the $E_2$-page of the spectral sequence in Theorem~\ref{ssbauskan}. Again only finitely many terms in the spectral sequence are non-zero. Explicitly from the computation of $\mathrm{lim}^0$ and $\mathrm{lim}^1$ as before, we have \begin{align*}
    E_2^{0,k} = \mathrm{lim}^0 H^k (\mathcal{H} ; \F_3) &\cong \F_3  \quad \text{for } k=0,4,5,6\\ 
    E_2^{1,k} = \mathrm{lim}^1 H^k (\mathcal{H} ; \F_3) &\cong \F_3 \quad \text{for } k=3,6,12\\
    E_2^{1,k} = \mathrm{lim}^1 H^k (\mathcal{H} ; \F_3) &\cong \F_3^{\oplus 2}  \quad \text{for } k=5,8,11 \\
    E_2^{1,k} = \mathrm{lim}^1 H^k (\mathcal{H} ; \F_3) &\cong \F_3^{\oplus 3} \quad \text{for } k= 7,9,10 \\
    E_2^{d,k} = \mathrm{lim}^d H^k (\mathcal{H} ; \F_3) &\cong 0  \quad \text{otherwise}.
\end{align*}

As there are no non-trivial differentials on the $E_2$-page, the spectral sequence collapses and $E_2^{*,*} = E_{\infty}^{*,*}$. This gives the following theorem: 
\begin{theo} \label{EcomU(3)-mod3}
The $\F_3$-cohomology of $E_{com} U(3)$ is given by \[ H^d (E_{com} U(3) ; \F_3 ) = \begin{cases} \F_3 & d=0,5,7,13 \\ \F_3^{\oplus 2} & d=4,9,12 \\ \F_3^{\oplus 3} & d=6,8,10,11 \\ 0 & \text{otherwise}   \end{cases} . \] 
\end{theo}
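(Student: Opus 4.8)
The plan is to apply the Bousfield--Kan spectral sequence from Theorem~\ref{ssbauskan} to the diagram $\mathcal{H}_{U(3)}\colon \mathcal{S}(2)\to\mathbf{Top}$, using the homotopy equivalence $E_{com}U(3)\simeq \mathrm{hocolim}\,\mathcal{H}$ of Theorem~\ref{adem-gomez-hocolim} (note $E_{com}U(3)$ is connected, so $E_{com}U(3)=E_{com}U(3)_{\mathds{1}}$). By Lemma~\ref{lem:H2(S2;H)} the $E_2$-page is concentrated in columns $d=0,1$, so the only possible differentials $d_r\colon E_r^{d,k}\to E_r^{d+r,k-r+1}$ all land in column $\ge 2$ and therefore vanish identically; the spectral sequence collapses at $E_2$ with no extension problems between columns beyond a short exact sequence $0\to E_\infty^{1,d-1}\to H^d\to E_\infty^{0,d}\to 0$ for each total degree $d$. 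So the entire content of the theorem is the computation of $\mathrm{lim}^0 H^k(\mathcal{H};\F_p)$ and $\mathrm{lim}^1 H^k(\mathcal{H};\F_p)$ for $p=2,3$ and all $k$, followed by reading off $H^d = E_\infty^{1,d-1}\oplus E_\infty^{0,d}$ (the sum being forced since we work over a field).

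First I would assemble the input data: the mod $p$ cohomology of each of the seven spaces $\mathcal{H}(\ii)$. These are all products (or quotients) of $PU(3)=\mathcal{H}(0)$, $U(3)/N(2)$, $U(3)/T(2)$, $\UFl_3(\C)=U(3)/N(3)$, and $\Fl_3(\C)\times_{\Sigma_3}\Fl_3(\C)=\mathcal{H}(2)$, whose mod $2$ and mod $3$ cohomologies are supplied by Corollary~\ref{cohoring3}, Theorem~\ref{coh3com2}, Theorems~\ref{u3t2mod2} and \ref{u3t2mod3}, and the two theorems on $\Fl_3(\C)\times_{\Sigma_3}\Fl_3(\C)$; one also needs $H^*(PU(3);\F_p)$ and $H^*(U(3)/N(2);\F_p)$, which are standard, and the K\"unneth theorem to get the products. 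Next, using the explicit description of the cosimplicial replacement \eqref{cosimrepS2} — with the simplifying observation from the proof of Lemma~\ref{lem:H2(S2;H)} that $\mathcal{H}(0,2)=\mathcal{H}(0,1,2)$ — I would, for each fixed $k$, write down the two-term complex $\prod_{\ii}H^k(\mathcal{H}(\ii))\xrightarrow{d^0-d^1}\prod_{\ii_0\to\ii_1}H^k(\mathcal{H}(\ii_1))$ and compute its kernel ($\mathrm{lim}^0$) and the cohomology at the middle spot ($\mathrm{lim}^1$), the rank of each being a matter of linear algebra over $\F_p$ once the maps $H^k(\mathcal{H}(\ii_0))\to H^k(\mathcal{H}(\ii_1))$ induced by the projections $p_{\ii_1,\ii_0}$ are identified. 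This bookkeeping is what the excerpt defers to Appendix~\ref{appendixA.2}.

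The main obstacle is precisely this last point: correctly identifying the maps induced on cohomology by the structure maps $p_{\ii,\jj}\colon \mathcal{H}(\ii)\to\mathcal{H}(\jj)$ under the homeomorphisms \eqref{HG}. For instance one must understand, at the level of mod $p$ cohomology, the maps $\mathcal{H}(0,1)=U(3)/N(2)\times PU(3)\to \mathcal{H}(0)=PU(3)$ and $\mathcal{H}(1,2)=U(3)/T(3)\times_{\Sigma_3}U(3)/T(2)\to\mathcal{H}(1)=U(3)/N(2)\times U(3)/T(2)$, which requires tracking how the flag-forgetting and torus-inclusion maps interact with the various fibrations over $B\Sigma_3$ used to compute the individual cohomologies. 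Once these restriction maps are pinned down, each $\mathrm{lim}^0$ and $\mathrm{lim}^1$ is a finite-dimensional $\F_p$-vector space whose dimension is computed by rank-nullity, and the stated answer follows by the collapse argument above. I would organize the computation degree by degree, and sanity-check the total Euler characteristic and the low-degree values against the known facts that $E_{com}U(3)$ is $3$-connected (so $H^d=0$ for $1\le d\le 3$ and $H^4$ is as small as the data allows) and against the rational computation of \cite{Adem-Gomez}.
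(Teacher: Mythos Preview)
Your proposal is correct and follows essentially the same approach as the paper: apply Theorem~\ref{ssbauskan} to $\mathcal{H}$, use Lemma~\ref{lem:H2(S2;H)} to see that the $E_2$-page is concentrated in columns $0$ and $1$ so all differentials vanish, and then read off $H^d$ from the (field-split) extension of $\mathrm{lim}^0 H^d(\mathcal{H};\F_3)$ by $\mathrm{lim}^1 H^{d-1}(\mathcal{H};\F_3)$, with the actual values of $\mathrm{lim}^0$ and $\mathrm{lim}^1$ obtained by the degree-by-degree linear algebra carried out in Appendix~\ref{appendixA.2}. You have in fact been more explicit than the paper about \emph{why} the collapse is automatic.
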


\subsection{Rational Cohomology Ring of \texorpdfstring{$E_{com} U(3)$}{EcomU(3)}} \label{section:2.3.3}

We finish this section by describing the rational cohomology ring of $E_{com} U(3)$. For the rest of this section, all cohomologies are taken with rational coefficients. From (\cite{Adem-Gomez}, Corollary 7.4) we have an isomorphism \begin{equation} \label{eq19}
     H^* (E_{com} U(3) ) \cong \big ( H^* (\Fl_3 (\C)) \otimes H^* (\Fl_3 (\C)) \big )^{\Sigma_3}. \end{equation}
Recall that $H^* (\Fl_3 (\C)) \cong \Q [x_1 , x_2 ,x_3]/(\sigma_1 , \sigma_2, \sigma_3) $, where $\sigma_i$ is the $i$-the elementary symmetric polynomial in the $x_i$'s. The averaging operator is defined as follows: \begin{align*}
      \rho : H^* (\Fl_3 (\C) ) \otimes H^* (\Fl_3 (\C) ) &\longrightarrow \big ( H^* (\Fl_3 (\C) ) \otimes H^* (\Fl_3 (\C) ) \big )^{\Sigma_3} \\
      f(\mathbf{x},\mathbf{y}) &\longmapsto  \frac{1}{6} \sum_{\omega \in \Sigma_3} f(\omega \mathbf{x} , \omega \mathbf{y}). 
      \end{align*}
For each $\omega \in \Sigma_3$ the diagonal descent monomial is defined as follows: \begin{equation}
     f_{\omega} := \prod_{\omega^{-1} (i) > \omega^{-1} (i+1)} (x_1 \cdots x_i) \otimes \prod_{\omega (j) > \omega (j+1)} (y_{\omega (1)} \cdots y_{\omega (j)} ). \end{equation} 
By (\cite{Allen1994}, Theorem~1.3) the collection $\{ \rho (f_{\omega}) \}_{\omega \in \Sigma_3}$ forms a free basis of $(H^* (\Fl_3 (\C) ) \otimes H^* (\Fl_3 (\C) ) )^{\Sigma_3}$ as a $\Q$-module. So, the set 
\begin{equation*}
    \mathcal{B} = \{ 1 , \rho (x_1 \otimes y_2) , \rho (x_1 \otimes y_2 y_3) , \rho (x_1 x_2 \otimes y_3) , \rho (x_1 x_2 \otimes y_2 y_3) , \rho (x_1^2 x_2 \otimes y_3^2 y_2) \} 
\end{equation*}
is basis for $(H^* (\Fl_3 (\C) ) \otimes H^* (\Fl_3 (\C) ) )^{\Sigma_3}$. It follows that images of $\rho (f_{\omega} )$ for $\omega \in \Sigma_3$ under the isomorphism (\ref{eq19}) form a basis of $H^* (E_{com} U(3) )$. With a slight abuse of notation, we also denote the image as $\rho (f_{\omega})$ for now. Also note that the degree of $\rho (f_{\omega} )$ is given by $\mathrm{deg} (\rho (f_{\omega})) = 2(\mathrm{maj} (\omega) + \mathrm{maj} (\omega^{-1}) )$ where \[ \mathrm{maj} (\omega) := \sum_{\omega (i) > \omega (i+1)} i.  \]  
\begin{theo}
\label{theo5.11}
We have the following relations between the classes in $\mathcal{B}$:
\begin{enumerate}
    \item $2 \rho (x_1 \otimes y_2)^2 = -\rho (x_1 x_2 \otimes y_2 y_3)$,
    \item $ 3\rho (x_1 \otimes y_2 y_3) \cdot \rho (x_1 x_2 \otimes y_3) = 2\rho (x_1^2 x_2 \otimes y_3^2 y_2)$,
    \item All other products are zero.
\end{enumerate}
\end{theo}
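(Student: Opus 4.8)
The whole argument runs through the ring isomorphism~(\ref{eq19}), which lets us compute instead inside $R := \widetilde R^{\Sigma_3}$, where
\[
\widetilde R := \Q[x_1,x_2,x_3,y_1,y_2,y_3]\big/\big(\sigma_1(x),\sigma_2(x),\sigma_3(x),\sigma_1(y),\sigma_2(y),\sigma_3(y)\big) \cong H^*(\Fl_3(\C))\otimes H^*(\Fl_3(\C)),
\]
with $\Sigma_3$ acting diagonally, permuting the $x_i$ and the $y_i$ simultaneously. The structural point is that $R$ is a \emph{subring} of $\widetilde R$, so any product of classes of $\mathcal B$ can be formed in $\widetilde R$ and afterwards re-expressed in $\mathcal B$. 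First I would fix an explicit monomial normal form in $\widetilde R$: eliminating $x_3 = -x_1-x_2$ and $y_3 = -y_1-y_2$ by means of $\sigma_1 = 0$ presents $H^*(\Fl_3(\C))$ as $\Q[x_1,x_2]/(x_1^2+x_1x_2+x_2^2,\ x_1^3)$ with monomial basis $1,x_1,x_2,x_1^2,x_1x_2,x_1^2x_2$, so that $x_i^3 = 0$ and every degree-$6$ monomial in the $x_i$ equals $\pm\, x_1^2 x_2$, and likewise for the $y_i$; reduction of an arbitrary element of $\widetilde R$ to this form is then purely mechanical.

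The second step is a dimension count that disposes of most of the statement. From the $\Sigma_3$-representation decomposition of $H^*(\Fl_3(\C)^{\times 2})$ recorded in~\S\ref{subsection:2.2.2} (equivalently, Corollary~8.2 of~\cite{Adem-Gomez}) one has $\dim_\Q R^0 = \dim_\Q R^4 = \dim_\Q R^8 = \dim_\Q R^{12} = 1$, $\dim_\Q R^6 = 2$, and $R^d = 0$ otherwise; together with $\deg\rho(f_\omega) = 2\big(\mathrm{maj}(\omega)+\mathrm{maj}(\omega^{-1})\big)$ this pins down the degrees of the six classes of $\mathcal B$ as $0,4,6,6,8,12$. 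Hence any product of two positive-degree classes of $\mathcal B$ lies in one of degrees $\{8,10,12,14,16,20,24\}$, and since $R$ vanishes in degrees $10,14,16,20,24$, all but finitely many of these products vanish for degree reasons. What remains is: (i) the single degree-$8$ product $\rho(x_1\otimes y_2)^2$, which, as $\dim R^8 = 1$, is a scalar multiple of $\rho(x_1x_2\otimes y_2y_3)$ — that scalar is relation~(1); and (ii) the four degree-$12$ products $\rho(x_1\otimes y_2y_3)\cdot\rho(x_1x_2\otimes y_3)$, $\rho(x_1\otimes y_2y_3)^2$, $\rho(x_1x_2\otimes y_3)^2$, $\rho(x_1\otimes y_2)\cdot\rho(x_1x_2\otimes y_2y_3)$, each of which, as $\dim R^{12} = 1$, is a scalar multiple of the top class $\rho(x_1^2x_2\otimes y_3^2y_2)$; relation~(2) is the first of these, and relation~(3) records the remaining three.

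It then remains to evaluate this handful of scalars. Averaging and using $\sigma_1(x) = \sigma_1(y) = 0$ yields clean closed forms for the generators, for instance $\rho(x_1\otimes y_2) = -\tfrac{1}{6}\sum_i x_i\otimes y_i$, $\rho(x_1x_2\otimes y_2y_3) = -\tfrac{1}{6}\sum_{i<j} x_ix_j\otimes y_iy_j$, $\rho(x_1\otimes y_2y_3) = \tfrac{1}{3}\sum_i x_i\otimes\big(\prod_{j\ne i}y_j\big)$, $\rho(x_1x_2\otimes y_3) = \tfrac{1}{3}\sum_i\big(\prod_{j\ne i}x_j\big)\otimes y_i$, together with a similar short expression for $\rho(x_1^2x_2\otimes y_3^2y_2)$. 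Multiplying the relevant pairs in $\widetilde R$ and reducing to normal form, each product lands in a graded piece of $R$ of dimension at most one and so is determined by the single coefficient of the monomial $x_1^2x_2\otimes y_1^2y_2$ (to convert an answer back into $\mathcal B$ one also needs the analogous coefficient of $\rho(x_1^2x_2\otimes y_3^2y_2)$ itself, obtained the same way); the relations of the theorem then emerge as numerical identities among these coefficients. I expect the only genuine difficulty to be bookkeeping: the averaged sums expand into many terms, and the reductions $x_i^2x_j\mapsto\pm\,x_1^2x_2$ and the signs they introduce must be propagated consistently. As a safeguard against sign errors I would cross-check via Poincar\'e duality for the closed oriented $12$-manifold $\Fl_3(\C)\times\Fl_3(\C)$: the top group $\widetilde R^{12}$ is one-dimensional and $\Sigma_3$ fixes its generator, since $\Sigma_3$ acts on $H^6(\Fl_3(\C))$ through the sign character (cf.~\S\ref{subsection:2.2.1}); hence the cup pairing restricts to a perfect pairing on $R$, so a degree-$12$ relation is confirmed by comparing top coefficients, and relation~(1) by pairing both sides with the degree-$4$ generator $\rho(x_1\otimes y_2)$ — the pairing $R^4\times R^8\to R^{12}$ being perfect — and comparing in $\widetilde R^{12}\cong\Q$.
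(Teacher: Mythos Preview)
Your proposal is correct and follows essentially the same route as the paper: both work through the isomorphism~(\ref{eq19}), write the averaged classes explicitly (the paper arrives at the same formulas $\rho(x_1\otimes y_2)=-\tfrac16\sum_i x_i\otimes y_i$ and $\rho(x_1x_2\otimes y_2y_3)=-\tfrac16\sum_{i<j} x_ix_j\otimes y_iy_j$), and verify the relations by direct multiplication in $\widetilde R$ using the identity $x_i^2=x_jx_k$ coming from $\sigma_1=\sigma_2=0$; the paper carries out relation~(1) in full and then simply asserts that the remaining relations follow by the same method. Your dimension-count framework for part~(3) and the Poincar\'e-duality cross-check are tidy organisational additions, but they do not constitute a different method.
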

\begin{proof}
We will show the relations in $\big ( H^* (\Fl_3 (\C)) \otimes H^* (\Fl_3 (\C) \big )^{\Sigma_3}$, which is enough due to the isomorphism (\ref{eq19}).
First note that in $H^* (\Fl_3 (\C))$ we have the following relation between the generators: $x_i^2 = x_j x_k$, $i\neq j \neq k$. This follows from combining the relations $\sigma_1 = \sigma_2 = 0$. Using the definition of $\rho$ given above we check that \begin{align*}
    \rho (x_1 \otimes y_2) &= -\frac{1}{6} (x_1 \otimes y_1 + x_2 \otimes y_2 + x_3 \otimes y_3)\\
    \rho (x_1 x_2 \otimes y_2 y_3) &= -\frac{1}{6} ( x_1 x_2 \otimes y_1 y_2 + x_2 x_3 \otimes y_2 y_3 +x_1 x_3 \otimes y_1 y_3) 
\end{align*}    
Hence, \begin{align*}
    2 \rho (x_1 \otimes y_2)^2 &= \frac{1}{18} (x_1^2 \otimes y_1^2 + x_2^2 \otimes y_2^2 + x_3^2 \otimes y_3^2 + 2 x_1 x_2 \otimes y_1 y_2  \\ & \hspace{5cm} + 2 x_2 x_3 \otimes y_2 y_3 + 2 x_1 x_3 \otimes y_1 y_3) \\
    & = \frac{1}{6} (x_1 x_2 \otimes y_1 y_2 + x_2 x_3 \otimes y_2 y_3 +x_1 x_3 \otimes y_1 y_3 )  \quad \quad (\text{using } x_i^2 = x_j x_k) \\
    &= - \rho (x_1 x_2 \otimes y_2 y_3)
\end{align*}
The other relations also follow directly by writing down the corresponding $\rho (f_{\omega})$ and using the relations $\sigma_1 = \sigma_2 = \sigma_3 = 0$.
\end{proof}
\begin{cor}
Let us denote image of $\rho (x_1 \otimes y_2)$ as $\gamma_4$, $\rho (x_1 \otimes y_2 y_3)$ as $\gamma_6$, $\rho (x_1 x_2 \otimes y_3)$ as $\widetilde{\gamma_6}$ in $H^* (E_{com} U(3) ; \Q)$ under the isomorphism (\ref{eq19}). Then \[  H^* (E_{com} U(3) ; \Q) \cong \frac{\Q [\gamma_4 , \gamma_6 , \widetilde{\gamma_6} ]}{(\gamma_4^3 , \gamma_6^2 , \widetilde{\gamma}_6^2 , \gamma_4 \gamma_6 , \gamma_4 \widetilde{\gamma_6})} . \]
\end{cor}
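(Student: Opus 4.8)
The plan is to realize $H^{*}(E_{com}U(3);\Q)$ as a quotient of $\Q[\gamma_{4},\gamma_{6},\widetilde{\gamma_{6}}]$ by an explicit ideal and then match dimensions over $\Q$. First I would recall from the discussion preceding Theorem~\ref{theo5.11} that, via the ring isomorphism (\ref{eq19}), $H^{*}(E_{com}U(3);\Q)$ is a free $\Q$-module on the basis $\mathcal{B}$, whose six elements lie in degrees $0,4,6,6,8,12$. Writing $\gamma_{4}=\rho(x_{1}\otimes y_{2})$, $\gamma_{6}=\rho(x_{1}\otimes y_{2}y_{3})$, $\widetilde{\gamma_{6}}=\rho(x_{1}x_{2}\otimes y_{3})$ as in the statement, Theorem~\ref{theo5.11}(1) and (2) let me re-express the two highest-degree basis classes as $\rho(x_{1}x_{2}\otimes y_{2}y_{3})=-2\gamma_{4}^{2}$ and $\rho(x_{1}^{2}x_{2}\otimes y_{3}^{2}y_{2})=\tfrac{3}{2}\,\gamma_{6}\widetilde{\gamma_{6}}$; since these rewrites amount to an invertible change of coordinates, $\{1,\gamma_{4},\gamma_{4}^{2},\gamma_{6},\widetilde{\gamma_{6}},\gamma_{6}\widetilde{\gamma_{6}}\}$ is again a $\Q$-basis of $H^{*}(E_{com}U(3);\Q)$. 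In particular $\gamma_{4},\gamma_{6},\widetilde{\gamma_{6}}$ generate the cohomology as a $\Q$-algebra, so there is a surjective graded ring homomorphism $\phi\colon\Q[\gamma_{4},\gamma_{6},\widetilde{\gamma_{6}}]\to H^{*}(E_{com}U(3);\Q)$.

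Next I would verify that each of the five relations $\gamma_{4}^{3},\ \gamma_{6}^{2},\ \widetilde{\gamma_{6}}^{2},\ \gamma_{4}\gamma_{6},\ \gamma_{4}\widetilde{\gamma_{6}}$ lies in $\ker\phi$. The vanishing of $\gamma_{6}^{2},\widetilde{\gamma_{6}}^{2}$ (both in degree $12$) is precisely part (3) of Theorem~\ref{theo5.11}, while $\gamma_{4}\gamma_{6}$ and $\gamma_{4}\widetilde{\gamma_{6}}$ vanish automatically because $H^{10}(E_{com}U(3);\Q)=0$. For $\gamma_{4}^{3}$, use $\gamma_{4}^{2}=-\tfrac{1}{2}\rho(x_{1}x_{2}\otimes y_{2}y_{3})$ and note that the product $\rho(x_{1}\otimes y_{2})\cdot\rho(x_{1}x_{2}\otimes y_{2}y_{3})$ is again one of the ``other products'' covered by Theorem~\ref{theo5.11}(3), hence zero; therefore $\gamma_{4}^{3}=0$. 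Thus $\phi$ factors through a surjective ring map $\bar{\phi}\colon R:=\Q[\gamma_{4},\gamma_{6},\widetilde{\gamma_{6}}]/(\gamma_{4}^{3},\gamma_{6}^{2},\widetilde{\gamma_{6}}^{2},\gamma_{4}\gamma_{6},\gamma_{4}\widetilde{\gamma_{6}})\to H^{*}(E_{com}U(3);\Q)$.

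Finally I would run a short monomial reduction: any monomial in $\gamma_{4},\gamma_{6},\widetilde{\gamma_{6}}$ not in the list $\{1,\gamma_{4},\gamma_{4}^{2},\gamma_{6},\widetilde{\gamma_{6}},\gamma_{6}\widetilde{\gamma_{6}}\}$ contains one of the forbidden factors $\gamma_{4}^{3},\gamma_{6}^{2},\widetilde{\gamma_{6}}^{2},\gamma_{4}\gamma_{6},\gamma_{4}\widetilde{\gamma_{6}}$, so these six monomials span $R$ over $\Q$. Since $\bar{\phi}$ carries this spanning set of at most six elements onto the $\Q$-basis of $H^{*}(E_{com}U(3);\Q)$ exhibited in the first paragraph, $\bar{\phi}$ is an isomorphism, which is the claim. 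I expect the only mildly delicate points to be bookkeeping ones: keeping the scalars $-2$ and $\tfrac{3}{2}$ correct when rewriting the degree-$8$ and degree-$12$ basis elements, and checking that the six-element monomial spanning set for $R$ is genuinely complete. Everything substantive is already supplied by Theorem~\ref{theo5.11} together with the vanishing of $H^{*}(E_{com}U(3);\Q)$ outside degrees $0,4,6,8,12$.
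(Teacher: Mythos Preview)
Your proof is correct and is exactly the natural elaboration of what the paper leaves implicit: the corollary is stated without proof, as an immediate consequence of Theorem~\ref{theo5.11} together with the basis $\mathcal{B}$, and your argument---replace the degree-$8$ and degree-$12$ basis elements by $\gamma_4^2$ and $\gamma_6\widetilde{\gamma_6}$ via relations (1) and (2), read off the five relations from (3) and degree reasons, and match dimensions---is precisely how one unpacks that implication. There is nothing to add.
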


\appendix
\section{Appendix}

\subsection{Computation using GAP for \S \ref{subsection:2.2.2}} \label{appendixA.1}

For the computations of the group cohomologies in Proposition~\ref{3MMSM}, we used the HAP \cite{HAP} package in the computer algebra system GAP \cite{GAP4}. The HAP package is commonly used for homological algebra computations. We want to compute the group cohomology of $\Sigma_3$ with coefficients in the module $M^{\otimes 2}$ and $M_S = M\otimes S$, where $M$ and $S$ are the standard and sign representation of $\Sigma_3$ respectively. Also recall, the $\Z \Sigma_3$-modules $M$ and $S$ are given by \[ M = \frac{\Z \{ x_1, x_2, x_3 \}}{(x_1 + x_2 + x_3)} \quad \text{and} \quad S= \Z \{t\},  \]  The actions of $\sigma$ and $\tau$ on the generators of the modules $M$ and $S$ are as follows:  \begin{align*}
    \tau \cdot x_1  = x_2 \quad &  \sigma \cdot x_1 = x_2  \\
    \tau \cdot x_2 = x_1 \quad & \sigma \cdot x_2 = - x_1 - x_2  \\ 
    \tau \cdot t = -t \quad  & \sigma \cdot t = t .
\end{align*}
The underlying $\Z$-module of $M^{\otimes 2}$ and $ M_S$ are isomorphic to \begin{align*} 
  \Z \{ x_1 \otimes x_1, x_1 \otimes x_2, x_2 \otimes x_1, x_2 \otimes x_2 \} , \quad \text{and} \quad \Z \{ x_1 \otimes t, x_2 \otimes t \}
\end{align*} 
respectively. The actions of $\sigma$ and $\tau$ on the generators of $M^{\otimes 2}$ and $M_S$ are induced by the actions on $x_1$, $x_2$, and $t$. In GAP, we first define the symmetric group $G:=\Sigma_3$, the general linear group $H:= GL(n, \Z)$, and the generators \verb|gens| of $G$ where $\sigma := (123)$ and $\tau = (12)$. A group homomorphism $A: G \rightarrow GL(n, \Z)$ can be viewed as the $\Z G$-module with the underlying abelian group $\Z^n$ on which $G$ acts via the homomorphism. To define a group homomorphism in GAP we use the function \verb|GroupHomomorphismByImages|, which takes 4 inputs, namely the domain and codomain groups, the generators of the domain, and the images of the generators in the codomain. The matrices $\sigma$ and $\tau$ map to in $GL(n, \Z)$ are denoted as \verb|sigma| and \verb|tau| respectively. The module $M_S$ is given by the following group homomorphism: \begin{align*}
    \Sigma_3 &\longrightarrow GL(2, \Z) \\
    \sigma &\mapsto \begin{pmatrix} 0 & -1 \\  1 & -1  \end{pmatrix} := m_{\sigma}, \quad \tau \mapsto \begin{pmatrix}
         0 & -1 \\ -1 & 0  
    \end{pmatrix} := m_{\tau}
\end{align*}
and the module $M^{\otimes 2}$ is given by the following group homomorphism: \begin{align*}
    \Sigma_3 &\longrightarrow GL(4, \Z) \\
    \sigma &\mapsto \begin{pmatrix} 0 & - m_{\sigma} \\ m_{\sigma} & - m_{\sigma} &  \end{pmatrix}, \quad \tau \mapsto \begin{pmatrix}
         0 & -m_{\tau} & \\ -m_{\tau} & 0 
    \end{pmatrix}.
\end{align*} 
Recall that $H^d (G; A) = \mathrm{Ext}_{\Z G}^d (\Z , A)$. The function \verb|ResolutionFiniteGroup(G, k)| takes a finite group $G$ and an integer $k\ge 1$ and returns $k+1$ terms of a free $\Z G$-resolution of $\Z$. The function \verb|HomToIntegralModule(R, A)| inputs a free $\Z G$-resolution $R$ and a group homomorphism $A: G \rightarrow GL(n, \Z)$ (viewed as a $\Z G$-module) and returns the cochain complex $\mathrm{Hom}_{\Z G} (R, A)$. Finally the function \verb|Cohomology(C, d)| returns the $d$-th cohomology of $G$ with coefficients in the $\Z G$-module $A$. The following code prints $H^d (\Sigma_3 ; M_S )$ for $0\le d \le 11$ as a list.
\begin{lstlisting}[language=GAP]
LoadPackage("HAP");
G := SymmetricGroup(G);;
H := GL(2, Integers);;
gens := GeneratorsOfGroup(G);;
sigma := [[0,-1],[1,-1]];;
tau := [[0,-1],[-1,0]];;
A := GroupHomomorphismByImages(G, H, gens, [sigma, tau]);;
R := ResolutionFiniteGroup(G, 12);;
C := HomToIntegralModule(R, A);;
List([0..11], d -> [d, Cohomology(C, d)]);
\end{lstlisting}
The obvious modifications to the above code produces $H^d (\Sigma_3; M^{\otimes 2})$:
\begin{lstlisting}[language=GAP]
LoadPackage("HAP");
G := SymmetricGroup(G);;
H := GL(4, Integers);;
gens := GeneratorsOfGroup(G);;
sigma := [[0,0,0,1],[0,0,-1,1],[0,-1,0,1],[1,-1,-1,1]];;
tau := [[0,0,0,1],[0,0,1,0],[0,1,0,0],[1,0,0,0]];;
A := GroupHomomorphismByImages(G, H, gens, [sigma, tau]);;
R := ResolutionFiniteGroup(G, 12);;
C := HomToIntegralModule(R, A);;
List([0..11], d -> [d, Cohomology(C, d)]);
\end{lstlisting}
Note that, we can only compute $H^d (\Sigma_3; M^{\otimes 2})$ and $H^d (\Sigma_3 ; M_S)$ up to a finite $d$, but both $H^* (\Sigma_3 ; M^{\otimes 2})$ and $H^* (\Sigma_3 ; M_S)$ are infinite. This issue is resolved by noticing that the group cohomology of $\Sigma_3$ is periodic.

\subsection{Select Computations of \texorpdfstring{$\mathrm{lim}^0$}{lim0} and \texorpdfstring{$\mathrm{lim}^1$}{lim1} in \S \ref{subsection:2.3.2}} \label{appendixA.2}

Let $\ii_0 = (0)$, $\ii_1 = (1)$, $\ii_2 = (2)$, $\ii_3 = (0,1)$, $\ii_4 = (0,2)$, $\ii_5 = (1,2)$, and $\ii_6 = (0,1,2)$. Recall from \S \ref{subsection:2.3.1} that $H^d (\mathcal{S} (2)^{op} ; H^k (\mathcal{H}; \F)) $ is equal to the $d$-th cohomology of the following chain complex.
\begin{equation} \label{chaincomplexH}
    \begin{tikzcd}
    H^k (\mathcal{H} (\ii_0)) \times H^k (\mathcal{H} (\ii_1)) \times H^k (\mathcal{H} (\ii_2)) \times H^k (\mathcal{H} (\ii_3)) \times H^k (\mathcal{H} (\ii_4)) \times H^k (\mathcal{H} (\ii_5)) \times H^k (\mathcal{H} (\ii_6)) \ar[d, "d_0 -d_1"] \\
    H^k (\mathcal{H} (\ii_3)^{\times 2}) \times H^k (\mathcal{H} (\ii_4)^{\times 2}) \times H^k (\mathcal{H} (\ii_5)^{\times 2}) \times H^k (\mathcal{H} (\ii_6)^{\times 6}) \ar[d, "d_0 - d_1 + d_2"] \\
    H^k (\mathcal{H} (\ii_6)^{\times 6}) \ar[d] \\
    0
\end{tikzcd}
\end{equation} 
Let us also denote the map induced by $\ii_j \rightarrow \ii_l$ as $p_{jl} : H^k (\mathcal{H} (\ii_j)) \rightarrow H^k (\mathcal{H} (\ii_l))$. From our computations from \S \ref{section:2.2}, \cite{BAUM1965}, and classical results we can describe the mod $2$ Poincar{\'e} series of all the spaces $\mathcal{\ii}$ for $\ii \in \mathcal{S} (2)$: \begin{align*}
    \Pi_{\mathcal{H}(\ii_0)} (t) &= 1+t^3+t^5+t^8 \\
    \Pi_{\mathcal{H}(\ii_1)} (t) &= 1+2t^2+2t^4+t^5+t^6+2t^7+2t^9+t^{11} \\
    \Pi_{\mathcal{H}(\ii_2)} (t) &= 1+t+t^2+t^3+2t^4+t^5+4t^6+t^7+2t^8+t^9+t^{10}+t^{11}+t^{12} \\
    \Pi_{\mathcal{H}(\ii_3)} (t) &= 1+t^2+t^3+t^4+2t^5+2t^7+t^8+t^9+t^{10}+t^{12} \\
    \Pi_{\mathcal{H}(\ii_4)} (t) &= 1+t+t^2+2t^3+2t^4+3t^5+3t^6+2t^7+3t^8+3t^9+2t^{10}+2t^{11}+t^{12}+t^{13}+t^{14} \\
    \Pi_{\mathcal{H}(\ii_5)} (t) &= 1+t+2t^2+2t^3+2t^4+3t^5+3t^6+3t^7+3t^8+2t^9+2t^{10}+2t^{11}+t^{12}+t^{13} \\
    \Pi_{\mathcal{H}(\ii_6)} (t) &= 1+t+t^2+2t^3+2t^4+3t^5+3t^6+2t^7+3t^8+3t^9+2t^{10}+2t^{11}+t^{12}+t^{13}+t^{14} 
\end{align*} 
From Proposition~\ref{lem:H2(S2;H)}, $d^0 -d^1 +d^2$ is surjective. So, we only need to know $\mathrm{ker} (d^0 - d^1) $ to compute $\lim^0$ and $\lim^1$. From the description of all the mod $2$ Poincar{\'e} series of the spaces $\mathcal{H} (\ii)$ above, we will have possible non-zero $\lim^0 H^k (\mathcal{H}; \F_2)$ and $\lim^1 H^k (\mathcal{H} ; \F_2)$ for $0\le k \le 14$. We will present details for a few of them. \\

Let us start with the case $k=0$. The coface maps $d^0$ and $d^1$ are given as follows: \begin{align*}
    d^0 : (a_0, a_1, a_2, a_3, a_4, a_5, a_6) & \longmapsto (a_3, a_3, a_4, a_4, a_5, a_5, a_6, \dots a_6), \\
    d^1 : (a_0, a_1, a_2, a_3, a_4, a_5, a_6) & \longmapsto (p_{03} (a_0), p_{13} (a_1), p_{04} (a_0) , p_{24} (a_2), p_{15} (a_1), p_{25} (a_2), \\
                       & \hspace{6cm} p_{06} (a_0), \dots , p_{56} (a_5) ).
\end{align*}
The kernel of $d^0 -d^1$ is the equalizer of $d^0$ and $d^1$. We thus have \begin{align*}
    \mathrm{ker} (d^0 -d^1) &= \{ (a_0, a_1, a_2, a_3, a_4, a_5, a_6) \mid  a_3 = p_{j3} (a_j), a_4 = p_{l4} (a_l), a_5 = p_{m5} (a_m), \\
     & \hspace{3cm} a_6 = p_{n6} (a_n) \text{ for } j=0,1, l=0,2, m=1,2, \text{ and } 0\le n\le 5 \} \\
     &= \{ (a_0, a_1, a_2, a_3, a_4, a_5, a_6) \mid a_0 = a_1 = a_2 = a_3 = a_4 = a_5 = a_6 \}\\
     &\cong \F_2
\end{align*}
The chain complex (\ref{chaincomplexH}) in this case is $\F_2^{\oplus 7} \rightarrow \F_2^{\oplus 12} \rightarrow \F_2^{\oplus 6} \rightarrow 0$.
Therefore $$\mathrm{lim}^0 H^0 (\mathcal{H} ; \F_2) \cong \F_2  \text{ and } \mathrm{lim}^1 H^0 (\mathcal{H} ; \F_2) = 0 . $$ \\
The next non-trivial example is $k=3$. The coface maps are as follows: \begin{align*}
    d^0 : (a_0, a_2, a_3, a_4, a_5, a_6) & \longmapsto (a_3, a_3, a_4, a_4, a_5, a_5, a_6, \dots a_6), \\
    d^1 : (a_0, a_2, a_3, a_4, a_5, a_6) & \longmapsto (p_{03} (a_0), 0, p_{04} (a_0) , p_{24} (a_2), 0, p_{25} (a_2), p_{06} (a_0), \dots , p_{56} (a_5) ).
\end{align*}
The kernel of $d^0 - d^1$ is given by \begin{align*}
    \mathrm{ker} (d^0 -d^1) &= \{ (a_0, a_2, a_3, a_4, a_5, a_6) \mid  a_3 = a_4 = a_5 = a_6 = 0, p_{03} (a_0) = p_{04} (a_0) = 0, \\
    & \hspace{6cm} p_{24} (a_2) = p_{25} (a_2) = 0 \} \\
     &\cong 0
\end{align*}
as the maps $p_{03}$, $ p_{04}$, $ p_{24}$, and $p_{25}$ are all injective. The chain complex (\ref{chaincomplexH}) in this case is $\F_2^{\oplus 9} \rightarrow \F_2^{\oplus 22} \rightarrow \F_2^{\oplus 12} \rightarrow 0$.
Therefore $$\mathrm{lim}^0 H^0 (\mathcal{H} ; \F_2) \cong 0  \text{ and } \mathrm{lim}^1 H^0 (\mathcal{H} ; \F_2) = \F_2 . $$\\
Let us see one more example, $k=4$. The coface maps are as follows \begin{align*}
    d^0 : (a_1, a_2, a_3, a_4, a_5, a_6) & \longmapsto (a_3, a_3, a_4, a_4, a_5, a_5, a_6, \dots a_6), \\
    d^1 : (a_1, a_2, a_3, a_4, a_5, a_6) & \longmapsto (0, p_{13} (a_1), 0 , p_{24} (a_2), p_{15} (a_1), p_{25} (a_2), 0, p_{16} (a_1), \dots , p_{56} (a_5) ).
\end{align*}
The kernel of $d^0 - d^1$ is given by \begin{align*}
    \mathrm{ker} (d^0 -d^1) &= \{ (a_1, a_2, a_3, a_4, a_5, a_6) \mid  a_3 = a_4 = a_5 = a_6 = 0, p_{24} (a_2) = 0, p_{13} (a_1) = 0 \} \\
     &= \{ (a_1^1, a_1^2) \mid p_{13} (a_1^1, a_1^2) = 0 \}\\
     &\cong \F_2
\end{align*}
as the map $p_{13} : \mathcal{H}(\ii_1) \cong \F_2^{\oplus 2} \rightarrow \mathcal{H}(\ii_3) \cong \F_2$ is given by projection of the first coordinate. The chain complex (\ref{chaincomplexH}) in this case is $\F_2^{\oplus 11} \rightarrow \F_2^{\oplus 22} \rightarrow \F_2^{\oplus 12} \rightarrow 0 $. Therefore $$\mathrm{lim}^0 H^4 (\mathcal{H} ; \F_2) \cong \F_2  \text{ and } \mathrm{lim}^1 H^4 (\mathcal{H} ; \F_2) = 0 . $$

\bibliographystyle{plainnat} 
\bibliography{refs} 

\begin{thebibliography}{19}
\providecommand{\natexlab}[1]{#1}
\providecommand{\url}[1]{\texttt{#1}}
\expandafter\ifx\csname urlstyle\endcsname\relax
  \providecommand{\doi}[1]{doi: #1}\else
  \providecommand{\doi}{doi: \begingroup \urlstyle{rm}\Url}\fi

\bibitem[Adem and Cohen(2006)]{Adem-Cohen}
A.~Adem and F.~R. Cohen.
\newblock Commuting elements and spaces of homomorphisms.
\newblock \emph{Mathematische Annalen}, 338\penalty0 (3):\penalty0 587--626,
  2006.
\newblock ISSN 1432-1807.
\newblock \doi{10.1007/s00208-007-0089-z}.
\newblock URL \url{https://doi.org/10.1007/s00208-007-0089-z}.

\bibitem[Adem and G{\'o}mez(2015)]{Adem-Gomez}
A.~Adem and J.~G{\'o}mez.
\newblock {A classifying space for commutativity in Lie groups}.
\newblock \emph{Algebraic \& Geometric Topology}, 15\penalty0 (1):\penalty0 493
  -- 535, 2015.
\newblock \doi{10.2140/agt.2015.15.493}.
\newblock URL \url{https://doi.org/10.2140/agt.2015.15.493}.

\bibitem[Adem et~al.(2012)Adem, Cohen, and Giese]{Adem-Cohen-Torres}
A.~Adem, F.~R. Cohen, and E.~Torres Giese.
\newblock Commuting elements, simplicial spaces and filtrations of classifying
  spaces.
\newblock \emph{Mathematical Proceedings of the Cambridge Philosophical
  Society}, 152\penalty0 (1):\penalty0 91–114, 2012.
\newblock \doi{10.1017/S0305004111000570}.

\bibitem[Adem et~al.(2017)Adem, G{\'o}mez, Lind, and
  Tillmann]{adem2017infinite}
A.~Adem, J.~G{\'o}mez, J.~Lind, and U.~Tillmann.
\newblock Infinite loop spaces and nilpotent {$K$}-theory.
\newblock \emph{Algebraic \& Geometric Topology}, 17\penalty0 (2):\penalty0
  869--893, 2017.
\newblock \doi{10.2140/agt.2017.17.869}.
\newblock URL \url{https://doi.org/10.2140/agt.2017.17.869}.

\bibitem[Adem et~al.(2022)Adem, G{\'o}mez, and Gritschacher]{adem2022}
A.~Adem, J.~M. G{\'o}mez, and S.~Gritschacher.
\newblock On the second homotopy group of spaces of commuting elements in lie
  groups.
\newblock \emph{International Mathematics Research Notices}, 2022\penalty0
  (24):\penalty0 19617--19689, 2022.

\bibitem[Allen(1994)]{Allen1994}
E.~E. Allen.
\newblock The descent monomials and a basis for the diagonally symmetric
  polynomials.
\newblock \emph{Journal of Algebraic Combinatorics}, 3\penalty0 (1):\penalty0
  5--16, Jan 1994.
\newblock ISSN 1572-9192.
\newblock \doi{10.1023/A:1022481303750}.
\newblock URL \url{https://doi.org/10.1023/A:1022481303750}.

\bibitem[Antol{\'i}n-Camarena et~al.(2020)Antol{\'i}n-Camarena, Gritschacher,
  and Villarreal]{antolin-camarena-villarreal}
O.~Antol{\'i}n-Camarena, S.~Gritschacher, and B.~Villarreal.
\newblock Classifying spaces for commutativity of low-dimensional lie groups.
\newblock \emph{Mathematical Proceedings of the Cambridge Philosophical
  Society}, 169\penalty0 (3):\penalty0 433–478, 2020.
\newblock \doi{10.1017/S0305004119000240}.

\bibitem[Baum and Browder(1965)]{BAUM1965}
P.~F. Baum and W.~Browder.
\newblock The cohomology of quotients of classical groups.
\newblock \emph{Topology}, 3\penalty0 (4):\penalty0 305--336, 1965.
\newblock ISSN 0040-9383.
\newblock \doi{https://doi.org/10.1016/0040-9383(65)90001-7}.
\newblock URL
  \url{https://www.sciencedirect.com/science/article/pii/0040938365900017}.

\bibitem[Bergeron and Silberman(2016)]{bergeron2016}
M.~Bergeron and L.~Silberman.
\newblock A note on nilpotent representations.
\newblock \emph{Journal of Group Theory}, 19\penalty0 (1):\penalty0 125--135,
  2016.
\newblock \doi{doi:10.1515/jgth-2015-0027}.
\newblock URL \url{https://doi.org/10.1515/jgth-2015-0027}.

\bibitem[Bousfield and Kan(1972)]{bk72}
A.~K. Bousfield and D.~M. Kan.
\newblock \emph{Homotopy limits, completions and localizations}.
\newblock Lecture notes in mathematics (Springer-Verlag) ; 304.
  Springer-Verlag, Berlin, 1972.
\newblock ISBN 0387061053.

\bibitem[Ellis(2023)]{HAP}
G.~Ellis.
\newblock {HAP}, homological algebra programming, {V}ersion 1.55, Apr 2023.
\newblock URL \url{https://gap-packages.github.io/hap}.
\newblock Refereed GAP package.

\bibitem[Fulton(1996)]{fulton1996}
W.~Fulton.
\newblock \emph{Geometry}, page 127–130.
\newblock London Mathematical Society Student Texts. Cambridge University
  Press, 1996.
\newblock \doi{10.1017/CBO9780511626241.013}.

\bibitem[GAP()]{GAP4}
GAP.
\newblock \emph{{GAP -- Groups, Algorithms, and Programming, Version 4.10.2}}.
\newblock The GAP~Group, 2022.
\newblock URL \url{https://www.gap-system.org}.

\bibitem[Gritschacher(2017)]{gritschacher}
S.~Gritschacher.
\newblock \emph{Commutative K-theory}.
\newblock PhD thesis, University of Oxford, 2017.

\bibitem[Gritschacher(2018)]{gritschacher2018}
S.~Gritschacher.
\newblock The spectrum for commutative complex {$K$}-theory.
\newblock \emph{Algebraic \& Geometric Topology}, 18\penalty0 (2):\penalty0
  1205--1249, 2018.
\newblock \doi{10.2140/agt.2018.18.1205}.
\newblock URL \url{https://doi.org/10.2140/agt.2018.18.1205}.

\bibitem[Kac and Smilga(2000)]{kac}
V.~G. Kac and A.V. Smilga.
\newblock {Vacuum structure in supersymmetric Yang-Mills theories with any
  gauge group}.
\newblock In M.~Shifman, editor, \emph{{The Many Faces of the Superworld: Yuri
  Golfand Memorial Volume}}, pages 185--234. {World Scientific}, 2000.
\newblock URL \url{https://hal.in2p3.fr/in2p3-00195525}.

\bibitem[Okay and Sheinbaum(2021)]{Okay2021}
C.~Okay and D.~Sheinbaum.
\newblock Classifying space for quantum contextuality.
\newblock \emph{Annales Henri Poincar{\'e}}, 22\penalty0 (2):\penalty0
  529--562, Feb 2021.
\newblock ISSN 1424-0661.
\newblock \doi{10.1007/s00023-020-00993-3}.
\newblock URL \url{https://doi.org/10.1007/s00023-020-00993-3}.

\bibitem[Okay and Williams(2020)]{okay20}
C.~Okay and B.~Williams.
\newblock On the mod-{$\ell$} homology of the classifying space for
  commutativity.
\newblock \emph{Algebraic \& Geometric Topology}, 20\penalty0 (2):\penalty0
  883--923, 2020.
\newblock \doi{10.2140/agt.2020.20.883}.
\newblock URL \url{https://doi.org/10.2140/agt.2020.20.883}.

\bibitem[Steenrod(1951)]{Steenrod51}
N.~Steenrod.
\newblock \emph{The Topology of Fibre Bundles. (PMS-14)}.
\newblock Princeton University Press, 1951.
\newblock ISBN 9780691005485.
\newblock URL \url{http://www.jstor.org/stable/j.ctt1bpm9t5}.

\end{thebibliography}

\end{document}